\theoremstyle{plain}
\newtheorem{thm}{Theorem}[section]
\newtheorem{lem}[thm]{Lemma}
\newtheorem{prop}[thm]{Proposition}
\theoremstyle{definition}
\newtheorem{defn}{Definition}[section]
\theoremstyle{remark}
\newtheorem{rem}{Remark}[section]
\begin{document}
\title{{\Large\bf {Quantum stochastic linear quadratic control theory: Closed-loop solvability}}
\thanks{
This work is supported by National Natural Science Foundation of China(no.12271298 and no.11871308).}}
\author{{\normalsize Penghui Wang,\quad Shan Wang \quad and\quad Shengkai Zhao  \,\,\,\,  } \\
{\normalsize School of Mathematics, Shandong University,} {\normalsize Jinan, 250100, China} }
\date{}
\maketitle
\begin{minipage}{14.1cm}
{\bf Abstract.}
In this paper, we investigate the closed-loop solvability of the quantum stochastic linear quadratic optimal control problem.
We derive the Pontryagin maximum principle for the linear quadratic control problem of infinite-dimensional quantum stochastic systems.
The equivalence between unique closed-loop solvability for quantum stochastic linear quadratic optimal control problems and the well-posedness of the corresponding quantum Riccati equations is established.
Notably, although the quantum Riccati equation is an infinite-dimensional deterministic operator-valued ordinary differential equation, classical methods are not applicable. Inspired by L\"{u} and Zhang's approach [Q. L\"{u} and X. Zhang, Probability Theory and Stochastic Modelling, 101. Springer, Cham, (2021) \& Mem. Amer. Math. Soc. 294 (2024)] to stochastic Riccati equations, we prove the existence and uniqueness of its solutions.
The results  provide a theoretical foundation for the optimal design of quantum control.\\
\noindent{\bf 2020 AMS Subject Classification:}  46L53; 
49N10;  	
81S25; 
93E20\\  	  
\noindent{\bf Keywords.} Quantum stochastic linear quadratic control problem; Quantum Riccati equation;  The Pontryagin maximum principle;  Closed-loop solvability.
\end{minipage}
 \maketitle
\numberwithin{equation}{section}
\newtheorem{theorem}{Theorem}[section]
\newtheorem{lemma}[theorem]{Lemma}
\newtheorem{proposition}[theorem]{Proposition}
\newtheorem{corollary}[theorem]{Corollary}
\newtheorem{remark}[theorem]{Remark}
\indent\indent

\section{Introduction}
\indent\indent
Linear quantum stochastic systems are a class of models used in quantum optics, circuit quantum electrodynamics systems, quantum opto-mechanical systems, and elsewhere \cite{D.P,G-2004,G.C-1985,Z.D}. The mathematical framework for these models is provided by the theory of quantum Wiener processes, and the associated quantum stochastic differential equations. With the rapid development of quantum technology, effectively controlling quantum systems to achieve specific functionalities has become a critical area of research \cite{G.J.N,M.J.P,N.2}. Furthermore, control problems for linear systems often enjoy analytical or computationally tractable solutions. In particular, the linear quadratic control  problem has gained widespread attention due to its effectiveness in optimizing system performance \cite{G.P-2017,G.P-2018,T.N,Z.D}.

In this paper, we study the linear quadratic optimal control problem of quantum stochastic systems. First, we present a brief introduction to noncommutative spaces.
Let $(\Lambda(\mathscr{H}), m, \mathscr{C})$ be a quantum (noncommutative) probability space \cite{B.S.W.1,B.S.W.2,P.X,M.T,W.W.Pontryagin,W.W.The relaxed transposition solution,W.F} on which the anti-symmetric Fock space $\Lambda(\mathscr{H})$ over $\mathscr{H}=L^2(\mathbb R^+)$ is defined. Let $\mathscr{C}$ be the von Neumann algebra generated by $\{\Psi(v): v\in L^2(\mathbb{R}^+)\}$, and let $\{\mathscr{C}_t\}_{t\geq 0}$ denote the increasing family of von Neumann subalgebras of $\mathscr{C}$ generated by  $\left\{\Psi(v): v\in L^2(\mathbb{R}^+)\ \textrm{and}\ \textrm{ess supp}\ v\subseteq [0,t]\right\}$. The Fermion Brownian motion $W(\cdot)$ is given by
\begin{equation}\label{Fermion Brownian motion}
W(t):=\Psi(\chi_{[0,t]})=\mathscr{A}^*(\chi_{[0,t]})+\mathscr{A}(J\chi_{[0,t]}),\quad t\geq0.
\end{equation}
which is self-adjoint and satisfies $W(t)^2=tI$ by the canonical anti-commutation relation (CAR for short) of Fermion fields, where $\mathscr{A}$, $\mathscr{A}^*$ and $J$ are annihilation, creation and complex conjugation operators, respectively. For the Fock vacuum $\Omega\in\Lambda_0(\mathscr{H})\subseteq\Lambda(\mathscr{H})$, define
$m(\cdot):=\langle\Omega, \cdot\Omega\rangle_{\Lambda(\mathscr{H})}$, which is a faithful, normal, central state on $\mathscr{C}$. For any $p\in [1,\infty)$, let $L^p(\mathscr{C})$ denote the completion of $\mathscr{C}$ with the norm $\|f\|_p=m\left(|f|^p\right)^\frac{1}{p}=\left\langle\Omega, |f|^{p}\Omega\right\rangle_{\Lambda(\mathscr{H})}^\frac{1}{p}$ (see \cite{F.K} for details).


Let $\mathcal{X}$ be a Banach space and $T>0$ be a fixed time horizon. Denote by $C([0,T];\mathcal{X})$ the Banach space of all continuous $\mathcal{X}$-valued functions on $[0,T]$. For each $q\in[1,\infty)$,
let $L^q([0,T];\mathcal{X})$ be the Banach space of all  $\mathcal{X}$-valued functions that are $q$th power Lebesgue integrable on $[0,T]$. 
Moreover, $L^\infty([0,T];\mathcal{X})$ is the Banach space of all  $\mathcal{X}$-valued, Lebesgue measurable functions that are essentially bounded on $[0,T]$.   In particular,  let
\begin{gather*}
C_\mathbb{A}([0,T];L^p(\mathscr{C})):=\left\{f\in C([0,T];L^p(\mathscr{C}));\ f(t)\in L^p(\mathscr{C}_t),\  \textrm{a.e.}\ t\in [0,T]\right\},\\
L^q_\mathbb{A}([0,T];L^p(\mathscr{C})):=\left\{f\in L^q([0,T];L^p(\mathscr{C}));\ f(t)\in L^p(\mathscr{C}_t),\  \textrm{a.e.}\ t\in [0,T]\right\}.
\end{gather*}

Let $\mathcal{X}_1$ and $\mathcal{X}_2$ be Banach spaces. Denote by $\mathcal{L}(\mathcal{X}_1;\mathcal{X}_2)$ the Banach space of all bounded
linear operators from $\mathcal{X}_1$ to $\mathcal{X}_2$ with the usual operator norm, and denote $\mathcal{L}(\mathcal{X}_1)$ simply as  $\mathcal{L}(\mathcal{X}_1;\mathcal{X}_1)$. Moreover, define
\begin{gather*}
 C_\mathbb{A}([0,T];\mathcal{L}(\mathcal{X};L^2(\mathscr{C}))):=\left\{F\in C([0,T];\mathcal{L}(\mathcal{X};L^2(\mathscr{C}))); F(t)\xi\in L^2(\mathscr{C}_t), \textrm{a.e.}\ t\in[0,T],    \xi\in \mathcal{X} \right\},\\
         L^q_\mathbb{A}([0,T];\mathcal{L}(\mathcal{X};L^2(\mathscr{C}))):=\left\{F\in L^q([0,T];\mathcal{L}(\mathcal{X};L^2(\mathscr{C}))); F(t)\xi\in L^2(\mathscr{C}_t), \textrm{a.e.}\ t\in[0,T],    \xi\in \mathcal{X} \right\},
 \end{gather*}
 and
\begin{equation*}
 L^\infty_\mathbb{A}([0,T];\mathcal{L}(\mathcal{X};L^2(\mathscr{C}))):=\left\{F\in L^\infty([0,T];\mathcal{L}(\mathcal{X};L^2(\mathscr{C}))); F(t)\xi\in L^2(\mathscr{C}_t), \textrm{a.e.}\ t\in[0,T],    \xi\in \mathcal{X} \right\}.
 \end{equation*}
Let $\mathcal{H}$ be a Hilbert space. Set
\begin{equation*}
  \mathbb{S}(\mathcal{H}):=\left\{F\in\mathcal{L}(\mathcal{H});\ F=F^*\right\},
  \end{equation*}
  and
  \begin{equation*}
 \mathbb{S}_+(\mathcal{H}):=\left\{F\in\mathbb{S}(\mathcal{H});\ \langle F\xi,\xi\rangle_\mathcal{H}\geq 0,\  \xi\in \mathcal{H}\right\}.
  \end{equation*}

Let $U$ be a separable Hilbert space. 
Consider the following controlled linear quantum stochastic system in noncommutative space $L^2(\mathscr{C})$:
\begin{equation}\label{FQSDE-LQ-inital}
\left\{
\begin{aligned}
  dx(t)= &\{A(t)x(t)+B(t)u(t)\}dt+\{C(t)x(t)+D(t)u(t)\}dW(t),\ {\rm{in}}\ [t_0,T],\\
   x(t_0)=& \eta,
\end{aligned}
\right.
\end{equation}
where
\begin{equation}\label{the condition of ABCD}
\left\{
\begin{array}{ll}
A(\cdot)\in L^1_\mathbb{A}([t_0,T];\mathcal{L}(L^2(\mathscr{C}))), &B(\cdot)\in L_\mathbb{A}^2([t_0,T];\mathcal{L}(U; L^2(\mathscr{C}))),\\
 C(\cdot)\in L^2_\mathbb{A}([t_0,T];\mathcal{L}(L^2(\mathscr{C}))), &D(\cdot)\in L_\mathbb{A}^\infty([t_0,T];\mathcal{L}(U; L^2(\mathscr{C}))).
 \end{array}\right.
\end{equation}
In the above, $x(\cdot)$ is the state process, and $u(\cdot)\in \mathcal{U}[t_0,T]:=L^2([t_0,T];U)$ is the control process. Any $u(\cdot)\in \mathcal{U}[t_0,T]$ is called an admissible control. For any initial pair $(t_0,\eta)\in [0,T]\times L^2(\mathscr{C}_{t_0})$ and admissible control $u(\cdot)\in \mathcal{U}[t_0,T]$, it follows from \cite[Theorem 2.1]{B.S.W.2} that the equation \eqref{FQSDE-LQ-inital} admits a unique solution $\bar{x}(\cdot)\equiv x(\cdot\hspace{0.3mm};t_0,\eta,\bar{u}(\cdot))$.

We introduce the following cost functional:
\begin{equation}\label{Quadratic cost functional}
\mathcal{J}(t_0,\eta;u(\cdot))=\frac{1}{2}{\rm Re}\left\{\int_{t_0}^T\{\langle M(t)x(t),x(t)\rangle+\langle R(t)u(t),u(t)\rangle_U\}dt+\langle Gx(T),x(T)\rangle\right\}.
\end{equation}
where
\begin{equation}\label{condition of MGR}
M(\cdot)\in L^1_\mathbb{A}([t_0,T];\mathbb{S}_+( L^2(\mathscr{C}))),\quad R(\cdot)\in L^\infty([t_0,T];\mathbb{S}_+(U)),\quad  G\in\mathbb{S}_+(L^2(\mathscr{C}_T)).
\end{equation}
Here and in what follows, we shall use $\langle\cdot,\cdot\rangle$ for the inner product in $L^2(\mathscr{C})$, where it is conjugate-linear with respect to the first variable and linear with respect to the second variable.

The optimal control problem studied in this paper is as follows.\\
\textbf{Problem (QSLQ).} For any given $(t_0,\eta)\in[0,T]\times L^2(\mathscr{C}_{t_0})$, find a $\bar{u}(\cdot)\in \mathcal{U}[t_0,T]$ such that
\begin{equation}\label{Problem QSLQ}
V(t_0,\eta)=\mathcal{J}(t_0,\eta;\bar{u}(\cdot))=\inf_{u(\cdot)\in \mathcal{U}[t_0,T]}\mathcal{J}(t_0,\eta;u(\cdot)).
\end{equation}
Any $\bar{u}(\cdot)\in \mathcal{U}[t_0,T]$  satisfying \eqref{Problem QSLQ} is called an \textit{optimal control} of \textbf{Problem (QSLQ)} for the initial pair $(t_0,\eta)$, and the corresponding $\bar{x}(\cdot)\equiv x(\cdot\hspace{0.3mm};t_0,\eta,\bar{u}(\cdot))$ is called an \textit{optimal state process}; the pair $(\bar{x}(\cdot),\bar{u}(\cdot))$ is called an \textit{optimal pair}. The function $V(\cdot,\cdot)$ is called the \textit{value function} of {\bf Problem (QSLQ)}.

Similar to  \cite{L.,L.Z,S.Y-Book-2019}, we provide the definitions of optimal feedback operators and the closed-loop solvability  about {\bf Problem (QSLQ)}.
\begin{defn}\label{optimal feedback operator}
A stochastic process $\Theta(\cdot)\in L^2([t_0,T]; \mathcal{L}(L^2(\mathscr{C});U))$ is called an optimal feedback operator for \textbf{Problem (QSLQ)} on $[t_0,T]$ if
\begin{equation}\label{Comparison of cost functional}
\mathcal{J}(t_0,\eta;\Theta(\cdot)\bar{x}(\cdot))\leq\mathcal{J}(t_0,\eta;u(\cdot)),\quad  \eta\in L^2(\mathscr{C}_{t_0}),\ u(\cdot)\in \mathcal{U}[t_0,T],
\end{equation}
where $\bar{x}(\cdot)=x(\cdot\, ;t_0,\eta,\Theta(\cdot)\bar{x}(\cdot))$ is the solution to \eqref{FQSDE-LQ-inital} with $\bar{u}(\cdot)=\Theta(\cdot)\bar{x}(\cdot)$.
\end{defn}
\begin{defn}
\textbf{Problem (QSLQ)} is said to be (uniquely) closed-loop solvable on $[t_0,T]$ if an optimal feedback operator (uniquely) exists on $[t_0,T]$.
\end{defn}

Closed-loop feedback control is an effective method for addressing linear quadratic control problems. In this framework,  \textbf{Problem (QSLQ)} typically involves minimizing a quadratic cost functional that considers both the state process and the control process, thereby ensuring the realization of an optimal control strategy.
By formulating an optimization problem, the Riccati equation can be derived from the perspectives of the calculus of variations or dynamic programming.
This provides the necessary theoretical foundation for describing optimal control, extensively studied in classical optimal linear quadratic control theory, as noted in \cite{A.M.Z, L., L.Z-2024, K.T-2002, K.T-2003, S.L.Y, S.Y,S.Y-Book-2019,W.M.W}.
Analogous to classical closed-loop feedback control theory, the following quantum Riccati equation is the main technique for studying \textbf{Problem (QSLQ)}:
\begin{equation}\label{quantum Riccati equation}
\left\{
\begin{aligned}
&\frac{dP}{dt}=\left\{-PA-A^*P-C^*PC-M+L^*K^{-1}L\right\},\ {\rm{in}}\ [t_0,T],\\
&P(T)=G,
\end{aligned}
\right.
\end{equation}
where
\begin{equation}\label{the definition of K and L}
L:=B^*P+D^*PC,\quad K:=R+D^*PD,
\end{equation}
and $K^{-1}$ is inverse of $K$. 

Next, we introduce the following quantum stochastic differential equations in $L^2(\mathscr{C})$:
\begin{equation}\label{to transposition solution QSDE-1}
\left\{
\begin{aligned}
 dz_1(s) &=\left\{A(s)z_1(s)+\mu_1(s)\right\}ds+\{C(s)z_1(s)+\nu_1(s)\}dW(s),\ {\rm in}\  [t,T], \\
 z_1(t)& =\xi_1,\\
\end{aligned}
\right.
\end{equation}
and
\begin{equation}\label{to transposition solution QSDE-2}
\left\{
\begin{aligned}
 dz_2(s) &=\left\{A(s)z_2(s)+\mu_2(s)\right\}ds+\{C(s)z_2(s)+\nu_2(s)\}dW(s),\ {\rm in}\ [t,T], \\
 z_2(t)& =\xi_2,
\end{aligned}
\right.
\end{equation}
where $\xi_1,\xi_2\in L^2(\mathscr{C}_{t})$, $\mu_1(\cdot),\mu_2(\cdot),\nu_1(\cdot),\nu_2(\cdot)\in L^2_\mathbb{A}([t,T];L^2(\mathscr{C}))$. It can be shown that \eqref{to transposition solution QSDE-1} (resp.\eqref{to transposition solution QSDE-2}) admits a unique solution $z_1(\cdot)\in C_\mathbb{A}([t_0,T];L^2(\mathscr{C}))$ (resp.$z_2(\cdot)\in C_\mathbb{A}([t_0,T];L^2(\mathscr{C}))$).
\begin{defn}\label{the def of weak solution to Riccati}
We call $P(\cdot)\in C_\mathbb{A}([t_0,T];\mathbb{S}(L^2(\mathscr{C})))$ a weak solution to \eqref{quantum Riccati equation} if the following conditions hold:
\begin{description}
  \item[(i)] $K(t)(\equiv R(t)+D(t)^*P(t)D(t))>0$ and its left inverse $K(t)^{-1}$ is a densely defined closed operator for a.e. $t\in[t_0,T]$;
  \item[(ii)] For any $t\in[t_0,T]$,  $\xi_1,\xi_2\in L^2(\mathscr{C}_{t})$, $\mu_1(\cdot),\mu_2(\cdot),\nu_1(\cdot),\nu_2(\cdot)\in L^2_\mathbb{A}([t,T];L^2(\mathscr{C}))$, it holds that
\begin{align*}
&\langle Gz_1(T), z_2(T)\rangle+\int_{t}^{T}\langle M(s) z_1(s), z_2(s)\rangle ds-\int_{t}^{T}\langle L(s)^*K(s)^{-1}L(s) z_1(s), z_2(s)\rangle ds\\
&=\langle P(t)z_1(t), z_2(t)\rangle+\int_{t}^{T}\langle P(s) z_1(s), \mu_2(s)\rangle ds+\int_{t}^{T}\langle P(s)\mu_1(s),z_2(s)\rangle ds\\
&\hspace{6mm}+\int_{t}^{T}\langle P(s)\{C(s)z_1(s)+\nu_1(s)\},\nu_2(s)\rangle ds +\int_{t}^{T}\langle P(s) \nu_1(s),C(t)z_2(s)\rangle ds,
\end{align*}
where $z_1(\cdot)$ and $z_2(\cdot)$ solve \eqref{to transposition solution QSDE-1} and \eqref{to transposition solution QSDE-2}, respectively.
\end{description}
\end{defn}

 We present the main result, which reveals the relationship between the closed-loop solvability of \textbf{Problem (QSLQ)} and the existence of solutions to the quantum Riccati equation \eqref{quantum Riccati equation}.
\begin{thm}\label{the regular solution and the feedback operator}
\textbf{Problem (QSLQ)} is  uniquely closed-loop solvable if and only if the quantum Riccati equation \eqref{quantum Riccati equation} admits a uniqueness weak solution $P(\cdot)$ in $C_\mathbb{A}([t_0,T];\mathbb{S}(L^2(\mathscr{C})))$. In this case, the optimal feedback operator $\Theta(\cdot)$ is given by
\begin{equation}\label{thm-feedback operator}
 \Theta(\cdot)=-K(\cdot)^{-1} L(\cdot),
\end{equation}
and the value function is
\begin{equation}\label{the relation of value function and P}
V(t_0,\eta)=\frac{1}{2}{\rm{Re}}\langle P(t_0)\eta,\eta\rangle.
\end{equation}
\end{thm}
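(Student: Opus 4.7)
My plan is to prove the two implications separately; the shared device is a completion-of-squares identity extracted by specialising the weak-solution formula of Definition~\ref{the def of weak solution to Riccati} to the state trajectory itself.

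For the sufficiency direction, assume \eqref{quantum Riccati equation} admits a weak solution $P(\cdot)$. Given any admissible $u(\cdot)$ with corresponding state $x(\cdot)$ from \eqref{FQSDE-LQ-inital}, I set $t=t_0$, $\xi_1=\xi_2=\eta$, $\mu_i=Bu$, $\nu_i=Du$ in Definition~\ref{the def of weak solution to Riccati}(ii), so that $z_1=z_2=x$. Taking real parts, using $P=P^*$ to pair conjugate terms, and recognising $B^*P+D^*PC=L$ together with $R+D^*PD=K$, the identity rearranges into the completion-of-squares
\[ \mathcal{J}(t_0,\eta;u) = \tfrac{1}{2}\mathrm{Re}\langle P(t_0)\eta,\eta\rangle + \tfrac{1}{2}\mathrm{Re}\int_{t_0}^T\langle K(t)(u(t)+K(t)^{-1}L(t)x(t)),\,u(t)+K(t)^{-1}L(t)x(t)\rangle_U\,dt. \]
Because $K>0$, the minimum is attained precisely at $u(t)=-K(t)^{-1}L(t)x(t)$, so $\Theta=-K^{-1}L$ is an optimal feedback operator, \eqref{the relation of value function and P} holds, and unique closed-loop solvability follows from strict positivity of $K$.

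For the necessity direction, assume \textbf{Problem (QSLQ)} is uniquely closed-loop solvable with feedback $\Theta(\cdot)$. I apply the Pontryagin maximum principle established earlier in the paper to the optimal pair $(\bar x(\cdot),\Theta(\cdot)\bar x(\cdot))$ starting from each $(t,\eta)\in[t_0,T]\times L^2(\mathscr{C}_t)$; this yields an adjoint process $(p(\cdot),q(\cdot))$ satisfying the stationarity relation $B^*p+D^*q+R\Theta\bar x=0$. The linear dependence of $(\bar x,p,q)$ on $\eta$ motivates the ansatz $p(t)=-P(t)\bar x(t)$, $q(t)=-P(t)[C(t)\bar x(t)+D(t)\Theta(t)\bar x(t)]$ for a self-adjoint operator-valued function $P(\cdot)$. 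Substituting this ansatz into the backward adjoint equation and testing against the processes $z_1,z_2$ of \eqref{to transposition solution QSDE-1}--\eqref{to transposition solution QSDE-2} produces exactly the identity in Definition~\ref{the def of weak solution to Riccati}(ii); the stationarity relation collapses to $L+K\Theta=0$, which forces $\Theta=-K^{-1}L$, while $K>0$ and the closedness of $K^{-1}$ on a dense domain are inherited from the strict convexity of $\mathcal{J}(t,0;\cdot)$ on $\mathcal{U}[t,T]$ that is guaranteed by uniqueness of the feedback. Uniqueness of $P(\cdot)$ follows by polarisation: two weak solutions both satisfy \eqref{the relation of value function and P}, and the sesquilinear form $\mathrm{Re}\langle P(t)\cdot,\cdot\rangle$ is determined by the value function.

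The main obstacle will be making the completion-of-squares rigorous in the noncommutative setting: because there is no classical It\^o formula for $\langle P(t)x(t),x(t)\rangle$, every manipulation must be mediated through the transposition identity in Definition~\ref{the def of weak solution to Riccati}(ii), with vigilant bookkeeping to keep each term inside the function spaces in which that identity was posed. A second delicate point is transferring the pointwise closedness and dense-domain properties of $K(t)^{-1}$ into the global regularity $\Theta(\cdot)\in L^2([t_0,T];\mathcal{L}(L^2(\mathscr{C});U))$ required by Definition~\ref{optimal feedback operator}; this is precisely the step at which \emph{unique} closed-loop solvability, rather than mere existence of an optimal control, is indispensable.
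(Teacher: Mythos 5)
Your first half (weak solution of \eqref{quantum Riccati equation} $\Rightarrow$ unique closed-loop solvability) is correct and is essentially identical to the paper's argument: specialise Definition~\ref{the def of weak solution to Riccati}(ii) to $z_1=z_2=x$, $\mu_i=Bu$, $\nu_i=Du$, complete the square with $K=R+D^*PD$ and $L=B^*P+D^*PC$, and read off optimality of $\Theta=-K^{-1}L$ together with \eqref{the relation of value function and P}; uniqueness comes from $K>0$. No issues there.

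The converse direction, however, has a genuine gap: you never actually \emph{construct} $P(\cdot)$. Writing $p(t)=-P(t)\bar x(t)$ is an ansatz, and in this infinite-dimensional noncommutative setting the existence of such an operator is precisely the hard part. One must show that the map $\eta\mapsto p(t;\eta)$ factors through the state flow $\eta\mapsto \bar x(t;\eta)$, which requires proving that the flow operator $X(t)$ is invertible. The paper does this by introducing the auxiliary ``inverse flow'' equation \eqref{QSDE-widetilde-x}, verifying $\widetilde X(t)^*X(t)=I$ via Fermion It\^o's formula, and invoking the density of the range of $X(t)$ (Lemma~\ref{dense}, itself proved by a separate BQSDE duality argument); only then is $P:=-\overline Y\,\widetilde X^{\,*}$ well defined, and its boundedness and self-adjointness still have to be established through the family of flows $X^s_t,\overline Y^s_t$ started at intermediate times. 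None of this is in your plan. A second, related gap: to verify that this $P$ satisfies Definition~\ref{the def of weak solution to Riccati}(ii) you must differentiate $\langle P(t)z_1(t),z_2(t)\rangle$, but $P$ is defined only as a composition of solution operators, so the product rule is not free. The paper justifies it by passing to finite-rank truncations $\Gamma_n$, working with operator-valued QSDEs in $\mathcal L_2(L^2(\mathscr C))$ (where the parity operator $\Upsilon$ enters the diffusion terms), computing $dP_n$ there, and taking limits via Lemma~\ref{approximate of FBQSDEs}. Your stated strategy of ``mediating every manipulation through the transposition identity in Definition~\ref{the def of weak solution to Riccati}(ii)'' is circular here, since that identity is exactly what is being established for the candidate $P$.

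Finally, your claim that $K(t)>0$ a.e.\ and the dense-domain/closedness of $K(t)^{-1}$ are ``inherited from strict convexity'' skips a real step. Uniqueness of the optimal control only yields positivity of the \emph{integrated} form $u\mapsto\int_{t_0}^T\langle K(u-\Theta x),u-\Theta x\rangle_U\,dt$; to upgrade this to pointwise a.e.\ injectivity and dense range of $K(t)$ one must exhibit a nonzero measurable selection $f(t)\in\ker K(t)$ (resp.\ $\widetilde f(t)\perp\mathscr R(K(t))$) on the putative bad set and show $\Theta\bar x+f$ is another optimal control. The paper does this with Wagner's measurable selection theorem (Lemma~\ref{measurable}) and a careful verification that the relevant sets $\mathfrak U_1,\widetilde{\mathfrak U}_1$ are Lebesgue measurable. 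Without some such device your deduction of Definition~\ref{the def of weak solution to Riccati}(i) does not go through.
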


Belavkin \cite{B} was the first to develop the mathematical theory of feedback control in quantum systems. Subsequent work by Edwards, Nurdin, James, and others further explored quantum filtering dynamics and optimal feedback control. They applied dynamic programming principles to investigate the optimal control problem for finite-dimensional quantum systems and derived the corresponding finite-dimensional Riccati equations \cite{D.J,E.B,J.N.P,N.J.P,W.J}.

To obtain the above result, we must overcome the following difficulties:
\begin{itemize}
\item The quantum Riccati equations currently being studied are typically finite-dimensional \cite{E.B,J.N.P}, i.e., matrix-valued, with relatively little research on infinite-dimensional quantum Riccati equations.
\item Unlike standard one-dimensional Brownian motions \cite{L.,S.L.Y,S.Y}, the Fermion Brownian motion $W(\cdot)$ does not commute with the diffusion term.

\item 
In this case, although the quantum Riccati equation \eqref{quantum Riccati equation} is deterministic, the images of its solutions acting on any element of its domain form  an adapted process.
Therefore, the classical methods \cite{L.,S.L.Y} for solving the deterministic Riccati equation are not applicable.
 \item In the study of classical infinite-dimensional Riccati equations \cite{L.,L.Z,L.Z-2024}, since forward (backward) stochastic differential equations possess sample paths, the resulting equations obtained through orthogonal projection onto finite-dimensional subspaces preserve their original adaptedness. In contrast, quantum stochastic differential equations lack sample path structures, thus rendering this methodology inapplicable to solving quantum Riccati equations.
\end{itemize}

To achieve this, we first truncate the initial (terminal) values of the quantum stochastic differential equations, which allows us to obtain approximate solutions for these equations. Next, we derive the Pontryagin maximum principle for \textbf{Problem (QSLQ)} concerning infinite-dimensional quantum stochastic systems \eqref{FQSDE-LQ-inital}. This derivation is based on the canonical anticommutation relations (CAR) of Fermion fields and classical variational methods. Our result builds upon previous contributions in the quantum stochastic theory of Fermion fields by scholars such as Gardiner \cite{G-2004,G.C-1985}, Gough \cite{G.G.J.N}, and Hudson and Parthasarathy \cite{B.H.J,H.L.,P.H,P.book}. Notably, the diffusion terms in the operator-valued quantum stochastic differential equations must incorporate the action of the parity operator $\Upsilon$ \cite{B.S.W.1,B.S.W.2,P.X}, which is both self-adjoint and unitary. Finally, we prove that the unique existence of optimal feedback controls for the linear quadratic optimal control problem
is equivalent to the well-posedness of solutions to the quantum Riccati equation \eqref{quantum Riccati equation}.
This result provide new approaches and insights for the control and optimization of \textit{infinite-dimensional quantum stochastic systems}.

The organization of this paper is as follows.
Section \ref{pre} presents the relevant results concerning the solutions of forward and backward quantum stochastic differential equations. In section \ref{QSLQ problem}, we obtain the Pontryagin-type maximum principle for \textbf{Problem (QSLQ)}.
Section \ref{relationship} is devoted to proving Theorem \ref{the regular solution and the feedback operator}. Sections \ref{Sufficiency} and \ref{Necessity} prove the sufficiency and necessity of
 Theorem \ref{the regular solution and the feedback operator}, respectively.
In the appendix, we give the proof for preliminary result in section \ref{pre}.

\section{Preliminaries}\label{pre}
\indent\indent
In this section, we provide some preliminaries which would be useful in the sequel. 
For any $t_0\in [0,T]$,  consider the  quantum stochastic differential equation (QSDE for short):
\begin{equation}\label{QSDE-solvability}
\left\{
\begin{aligned}
&dx(t)=\{A(t)x(t)+f(t)\}dt+\{C(t)x(t)+g(t)\}dW(t),\quad \textrm{in}\ [t_0, T],\\
&x(t_0)=\eta,
\end{aligned}\right.
\end{equation}
where $\eta\in L^2(\mathscr{C}_{t_0})$, $A\in L^1_\mathbb{A}([t_0,T];\mathcal{L}(L^2(\mathscr{C})))$, $C\in L^2_\mathbb{A}([t_0,T];\mathcal{L}(L^2(\mathscr{C})))$, $f\in L^1_\mathbb{A}([t_0,T];L^2(\mathscr{C}))$ and $g\in L^2_\mathbb{A}([t_0,T];L^2(\mathscr{C}))$. 
\begin{lem}\cite[Theorem 2.1]{B.S.W.2}\label{the property of the solution of FQSDE}
The equation \eqref{QSDE-solvability} has a unique  solution $x(\cdot)\in C_\mathbb{A}([t_0,T];L^2(\mathscr{C}))$,
and
\begin{equation}\label{the estimate of the solution of FQSDE}
\sup_{t\in[t_0,T]}\|x(t)\|_2\leq \mathcal{C}\left(\|\eta\|_2+\|f\|_{L^1_\mathbb{A}([t_0,T];L^2(\mathscr{C}))}+\|g\|_{L^2_\mathbb{A}([t_0,T];L^2(\mathscr{C}))}\right).
\end{equation}
\end{lem}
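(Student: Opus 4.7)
The plan is to prove existence, uniqueness, and the a priori estimate by a Picard iteration / contraction mapping argument in the Banach space $C_\mathbb{A}([t_0,T];L^2(\mathscr{C}))$, adapted to the Fermion noncommutative setting. Define the map $\Phi$ on this space by
\[
\Phi(y)(t):=\eta+\int_{t_0}^t\{A(s)y(s)+f(s)\}\,ds+\int_{t_0}^t\{C(s)y(s)+g(s)\}\,dW(s).
\]
First I would check that $\Phi$ is well-defined and adapted: the Bochner integral is continuous in $t$ and takes values in $L^2(\mathscr{C}_t)$ because $A(s)y(s)+f(s)\in L^2(\mathscr{C}_s)$ is in $L^1_\mathbb{A}([t_0,T];L^2(\mathscr{C}))$; the quantum stochastic integral against $dW$ is handled by the Fermion quantum It\^o isometry, which for $h\in L^2_\mathbb{A}([t_0,T];L^2(\mathscr{C}))$ yields an estimate of the form
\[
\Big\|\int_{t_0}^{t}h(s)\,dW(s)\Big\|_2^2\leq \mathcal{C}\int_{t_0}^{t}\|h(s)\|_2^2\,ds,
\]
together with continuity in $t$ and adaptedness of the output.

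Next I would establish that $\Phi$ is a strict contraction on a short subinterval $[t_0,t_0+\delta]$. Taking the difference $\Delta(t):=\Phi(y_1)(t)-\Phi(y_2)(t)$ and using the triangle inequality, H\"older, and the Fermion It\^o isometry above, one obtains
\[
\sup_{t\in[t_0,t_0+\delta]}\|\Delta(t)\|_2\leq \Big(\int_{t_0}^{t_0+\delta}\|A(s)\|\,ds+\mathcal{C}\Big(\int_{t_0}^{t_0+\delta}\|C(s)\|^2\,ds\Big)^{\!1/2}\Big)\sup_{s\in[t_0,t_0+\delta]}\|y_1(s)-y_2(s)\|_2.
\]
The absolute continuity of $s\mapsto\int_{t_0}^{s}\|A(r)\|\,dr$ and $s\mapsto\int_{t_0}^{s}\|C(r)\|^2\,dr$, which follows from the assumptions $A\in L^1_\mathbb{A}$ and $C\in L^2_\mathbb{A}$, lets us choose $\delta>0$ independent of the starting point so the prefactor is strictly less than $1$. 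A unique fixed point on $[t_0,t_0+\delta]$ then exists, and patching together finitely many such intervals yields a unique global solution $x(\cdot)\in C_\mathbb{A}([t_0,T];L^2(\mathscr{C}))$.

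For the estimate \eqref{the estimate of the solution of FQSDE}, I would apply the same norm bounds to the solution itself, producing
\[
\|x(t)\|_2\leq \|\eta\|_2+\|f\|_{L^1_\mathbb{A}}+\|g\|_{L^2_\mathbb{A}}+\int_{t_0}^{t}\|A(s)\|\,\|x(s)\|_2\,ds+\mathcal{C}\Big(\int_{t_0}^{t}\|C(s)\|^2\,\|x(s)\|_2^2\,ds\Big)^{\!1/2},
\]
and then close the loop by squaring and invoking a generalized Gronwall inequality with the mixed $L^1$/$L^2$ kernel $\|A(\cdot)\|+\|C(\cdot)\|^2$.

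The main obstacle is the quantum It\^o isometry itself. Unlike the classical Brownian case, the Fermion Brownian motion $W(\cdot)$ does not commute with adapted integrands (this is emphasized in the introduction), so the $L^2(\mathscr{C})$-estimate for $\int h\,dW$ requires the CAR identity $W(t)^2=tI$ together with the parity operator $\Upsilon$ to separate the even/odd parts of the integrand. Since the lemma is quoted directly from \cite[Theorem~2.1]{B.S.W.2}, where this noncommutative It\^o estimate is established, the argument outlined above merely assembles it into a Picard scheme and a Gronwall closure.
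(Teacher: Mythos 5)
The paper does not prove this lemma: it is imported verbatim from \cite[Theorem 2.1]{B.S.W.2}, so there is no internal proof to compare against. Your Picard/contraction scheme with the It\^o--Clifford isometry and a Gronwall closure is the standard argument and is essentially how the cited reference establishes the result; the only cosmetic remarks are that for adapted $L^2(\mathscr{C})$-valued integrands the It\^o--Clifford integral is an exact isometry, $\bigl\|\int_{t_0}^{t}h\,dW\bigr\|_2^2=\int_{t_0}^{t}\|h(s)\|_2^2\,ds$ (so no constant $\mathcal{C}$ or parity-operator bookkeeping is needed at this stage --- $\Upsilon$ only enters for operator-valued integrands later in the paper), and your uniform-in-starting-point choice of $\delta$ via absolute continuity of $\int\|A\|\,ds$ and $\int\|C\|^2\,ds$ is exactly what makes the patching argument work.
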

Consider the following backward quantum stochastic differential equation (BQSDE for short):
\begin{equation}\label{BQSDE-solvability}
\left\{
\begin{aligned}
&dy(t)=\{A(t)^*y(t)+C(t)^*Y(t)+h(t)\}dt+Y(t)dW(t),\quad \textrm{in}\ [t_0, T],\\
&y(T)=\xi,
\end{aligned}\right.
\end{equation}
where $\xi\in L^2(\mathscr{C}_T)$ and $h\in L^1_{\mathbb{A}}([t_0,T];L^2(\mathscr{C}))$.
\begin{lem}\cite[Theorem 4.1]{W.W.Pontryagin}\label{the property of the solution of BQSDE}
For any $\xi\in L^2(\mathscr{C}_T)$, the equation \eqref{BQSDE-solvability} admits a unique solution $(y(\cdot),Y(\cdot))\in C_\mathbb{A}([t_0,T];L^2(\mathscr{C}))\times L^2_\mathbb{A}([t_0,T];L^2(\mathscr{C}))$, and it holds that
\begin{equation}\label{the estimate of the solution of BQSDE}
\|(y(\cdot),Y(\cdot))\|^2_{C_\mathbb{A}([t_0,T];L^2(\mathscr{C}))\times L^2_\mathbb{A}([0,T];L^2(\mathscr{C}))}\leq \mathcal{C}\left(\|\xi\|^2_2+\|h\|^2_{L^1_\mathbb{A}([t_0,T];L^2(\mathscr{C}))}\right).
\end{equation}
\end{lem}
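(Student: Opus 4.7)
The plan is to first solve the BQSDE with vanishing coefficients $A^*=C^*=0$ via the quantum conditional expectation and the Fermion-side martingale representation theorem, then bootstrap to the full equation by a Picard contraction, and finally obtain the sharp a priori estimate from a quantum It\^o formula combined with Gronwall's inequality. Since the state $m$ is central, faithful and normal, the orthogonal projection $E_t:L^2(\mathscr{C})\to L^2(\mathscr{C}_t)$ is a well-defined $L^2$-bounded conditional expectation. Setting
\begin{equation*}
m(t):=E_t\!\left[\xi-\int_t^T h(s)\,ds\right],
\end{equation*}
$m(\cdot)$ is an adapted $L^2(\mathscr{C})$-martingale, and the Fermion Brownian motion analogue of the Kunita--Watanabe representation theorem supplies a unique $Y\in L^2_\mathbb{A}([t_0,T];L^2(\mathscr{C}))$ with $m(t)=m(t_0)+\int_{t_0}^t Y(s)\,dW(s)$. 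Taking $y(t):=m(t)+\int_{t_0}^t h(s)\,ds$ produces a solution of the trivial BQSDE, and the Fermion It\^o isometry together with the contractivity of $E_t$ yields an estimate of the claimed form.

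For the full equation I would run a Picard iteration. Define $\Phi:(\tilde y,\tilde Y)\mapsto(y,Y)$, where $(y,Y)$ solves the trivial BQSDE with free term $h+A^*\tilde y+C^*\tilde Y$. The integrability assumptions $A(\cdot)\in L^1_\mathbb{A}$ and $C(\cdot)\in L^2_\mathbb{A}$ give a bound of the shape
\begin{equation*}
\|A^*\tilde y+C^*\tilde Y\|_{L^1_\mathbb{A}([t_0,T];L^2(\mathscr{C}))}\le \|A\|_{L^1}\|\tilde y\|_{C_\mathbb{A}}+\sqrt{T}\,\|C\|_{L^2}\|\tilde Y\|_{L^2_\mathbb{A}},
\end{equation*}
and equipping $C_\mathbb{A}\times L^2_\mathbb{A}$ with an exponentially-weighted norm $e^{-\lambda t}$ for $\lambda$ sufficiently large turns $\Phi$ into a contraction. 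Banach's fixed-point theorem then supplies existence and uniqueness on $[t_0,T]$. For the sharp estimate, I would apply a Fermion-adapted It\^o formula to $\|y(t)\|_2^2=m\bigl(y(t)^*y(t)\bigr)$, which formally gives
\begin{equation*}
\|y(t)\|_2^2+\int_t^T\|Y(s)\|_2^2\,ds=\|\xi\|_2^2-2\int_t^T\mathrm{Re}\,\langle y(s),\,A(s)^*y(s)+C(s)^*Y(s)+h(s)\rangle\,ds,
\end{equation*}
after the pure martingale term is annihilated by $m$. Cauchy--Schwarz then splits the cross terms, and Gronwall's lemma applied with the integrable weight $\|A(\cdot)\|_{\mathcal{L}(L^2(\mathscr{C}))}+\|C(\cdot)\|_{\mathcal{L}(L^2(\mathscr{C}))}^2$ delivers both $\sup_t\|y(t)\|_2^2$ and $\int_{t_0}^T\|Y(s)\|_2^2\,ds$ controlled by $\|\xi\|_2^2+\|h\|^2_{L^1_\mathbb{A}}$.

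The main obstacle is justifying the Fermion-side quantum It\^o/product formula used in the last step, since the diffusion coefficient $Y(s)$ does not commute with $dW(s)$: the correct product rule on $L^2(\mathscr{C})$ requires inserting the parity operator $\Upsilon$ that anticommutes with each $W(t)$, exactly as highlighted in the discussion following equation~\eqref{Fermion Brownian motion}. I would verify the product rule first on simple tensors or smooth vectors in the Fock space $\Lambda(\mathscr{H})$ using the canonical anticommutation relations, then extend by $L^2$-density, and check that when the It\^o expansion is evaluated under the central state $m(\cdot)$ the parity corrections either cancel outright or are absorbed into the clean $\|Y(s)\|_2^2\,ds$ term. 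Lemma~\ref{the property of the solution of FQSDE} provides the forward counterpart needed to make the contraction argument close in the same function spaces.
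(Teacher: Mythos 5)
The paper offers no proof of this lemma: it is imported verbatim from \cite[Theorem 4.1]{W.W.Pontryagin}, so there is no internal argument to compare against. The closest internal evidence of the intended method is the Appendix, where Lemma \ref{cor of martingale representation} and the proof of Lemma \ref{approximate of FBQSDEs} deploy exactly the machinery you propose: the noncommutative martingale representation theorem \cite[Theorem 4.1]{B.S.W.2} producing $l(\cdot)$ and $\mathcal{K}(\cdot,\cdot)$ with $Y(s)=l(s)-\int_s^T\mathcal{K}(\sigma,s)\,d\sigma$, the It\^o--Clifford isometry \cite[Theorem 3.5(c)]{B.S.W.1}, and a contraction on a terminal interval cut off at a time $\widetilde{T}_1$ chosen so that the relevant constant is at most $\frac{1}{2}$. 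So your overall route (conditional expectation plus Clifford martingale representation for the driver-free equation, then a fixed point) is the natural one and consistent with the source.

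That said, two steps are wrong as literally written. First, your $m(t):=m\bigl(\xi-\int_t^T h(s)\,ds\,\big|\,L^2(\mathscr{C}_t)\bigr)$ is \emph{not} a martingale: since $h$ is adapted, it equals $N(t)+\int_{t_0}^t h(s)\,ds$ with $N(t):=m\bigl(\xi-\int_{t_0}^T h(s)\,ds\,\big|\,L^2(\mathscr{C}_t)\bigr)$, i.e.\ it already solves the trivial equation; the representation theorem does not apply to it, and your subsequent definition $y(t):=m(t)+\int_{t_0}^t h(s)\,ds$ double-counts the drift, giving $y(T)=\xi+\int_{t_0}^T h(s)\,ds\neq\xi$. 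The one-line repair is to represent the genuine martingale $N(\cdot)$ via \cite[Theorem 4.1]{B.S.W.2} to obtain $Y$, and set $y:=N+\int_{t_0}^{\cdot}h\,ds$. Second, the plain weight $e^{-\lambda t}$ does not yield a contraction here, because $A(\cdot)$ is only $L^1$ and $C(\cdot)$ only $L^2$ in time: if the mass of $\|A(\cdot)\|_{\mathcal{L}(L^2(\mathscr{C}))}$ concentrates near one instant, the weighted Lipschitz constant remains of order $\|A\|_{L^1}$ however large $\lambda$ is. One must instead use the weight $\exp\bigl(-\lambda\int_{t_0}^t(\|A(s)\|_{\mathcal{L}(L^2(\mathscr{C}))}+\|C(s)\|^2_{\mathcal{L}(L^2(\mathscr{C}))})\,ds\bigr)$, or contract on short intervals chosen by absolute continuity of these integrals and patch finitely many times, as the Appendix does; note also that $\int_{t}^{T}\|C(s)^*\widetilde{Y}(s)\|_2\,ds\le\|C\|_{L^2}\|\widetilde{Y}\|_{L^2}$ by Cauchy--Schwarz, so your factor $\sqrt{T}$ is spurious. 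Finally, the obstacle you single out as the main one is avoidable: the estimate \eqref{the estimate of the solution of BQSDE} follows from $y(t)=m\bigl(\xi-\int_t^T\{A(s)^*y(s)+C(s)^*Y(s)+h(s)\}\,ds\,\big|\,L^2(\mathscr{C}_t)\bigr)$, the $L^2$-contractivity of conditional expectation, and the isometry bound on $\int_{t_0}^T\|Y(s)\|_2^2\,ds$, with the quadratic terms absorbed on the same small intervals used for the contraction; no Fermion It\^o formula for $\|y(t)\|_2^2$, and hence no parity bookkeeping with $\Upsilon$, is required. The paper invokes \cite[Theorem 5.2]{A.H-2} only for bilinear pairings $\langle y(\cdot),x(\cdot)\rangle$ of two solution processes, where this issue does not arise.
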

By  Lemma \ref{the property of the solution of FQSDE} and Lemma \ref{the property of the solution of BQSDE}, we can obtain the following result.
\begin{lem}\label{the result on FBQSDE}
Let $\Theta(\cdot)$ be an optimal feedback operator of \textbf{Problem (QSLQ)}. The triple $(x(\cdot),$  $y(\cdot),Y(\cdot))$ is the unique solution to the following forward-backward QSDE\textnormal{:} 
\begin{equation}\label{FBQSDE}
\left\{
\begin{array}{ll}
dx(t)=\{A(t)+B(t)\Theta(t)\}x(t)dt+\{C(t)+D(t)\Theta(t)\}x(t)dW(t),\ &{\rm in}\ [t_0,T],\\
dy(t)=-\left\{A(t)^*y(t)+C(t)^*Y(t)-M(t)x(t)\right\}dt+Y(t)dW(t),\ &{\rm in}\ [t_0, T],\\
x(t_0)=\varsigma,\quad y(T)=-Gx(T).&
\end{array}
\right.
\end{equation}
Moreover,
\begin{equation}\label{estimate of FBQSDE}
\|(x(\cdot),y(\cdot),Y(\cdot))\|_{C_\mathbb{A}([t_0,T];L^2(\mathscr{C}))\times C_\mathbb{A}([t_0,T];L^2(\mathscr{C}))\times L^2_\mathbb{A}([t_0,T];L^2(\mathscr{C}))}\leq \mathcal{C}\|\varsigma\|_2.
\end{equation}
\end{lem}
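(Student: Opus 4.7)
The plan is to treat the forward--backward system \eqref{FBQSDE} as a decoupled pair and invoke Lemma \ref{the property of the solution of FQSDE} and Lemma \ref{the property of the solution of BQSDE} in sequence. First I would verify the regularity of the coefficients of the forward equation. Since $\Theta(\cdot)\in L^2([t_0,T];\mathcal{L}(L^2(\mathscr{C});U))$, $B(\cdot)\in L^2_\mathbb{A}([t_0,T];\mathcal{L}(U;L^2(\mathscr{C})))$ and $D(\cdot)\in L^\infty_\mathbb{A}([t_0,T];\mathcal{L}(U;L^2(\mathscr{C})))$, the Cauchy--Schwarz inequality yields $B\Theta\in L^1_\mathbb{A}([t_0,T];\mathcal{L}(L^2(\mathscr{C})))$ and $D\Theta\in L^2_\mathbb{A}([t_0,T];\mathcal{L}(L^2(\mathscr{C})))$. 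Adaptedness of these compositions follows because, for each $\xi\in L^2(\mathscr{C})$, one has $\Theta(t)\xi\in U$, so $B(t)\Theta(t)\xi\in L^2(\mathscr{C}_t)$ by the adaptedness of $B$, and similarly for $D$. With $A$ replaced by $A+B\Theta$, $C$ replaced by $C+D\Theta$, and zero forcing terms, the forward equation fits the framework of Lemma \ref{the property of the solution of FQSDE}, yielding a unique $x(\cdot)\in C_\mathbb{A}([t_0,T];L^2(\mathscr{C}))$ with $\sup_{t\in[t_0,T]}\|x(t)\|_2\leq \mathcal{C}\|\varsigma\|_2$.

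With $x(\cdot)$ now fixed, the backward equation becomes a standard nonhomogeneous BQSDE whose inhomogeneous term is $-M(\cdot)x(\cdot)$ and whose terminal datum is $-Gx(T)$. Since $M\in L^1_\mathbb{A}([t_0,T];\mathbb{S}_+(L^2(\mathscr{C})))$, the pointwise bound $\|M(t)x(t)\|_2\le \|M(t)\|\,\|x(t)\|_2$ integrated in $t$ yields $Mx\in L^1_\mathbb{A}([t_0,T];L^2(\mathscr{C}))$ with norm controlled by $\|M\|_{L^1}\,\sup_t\|x(t)\|_2$; and $Gx(T)\in L^2(\mathscr{C}_T)$ with $\|Gx(T)\|_2\leq \|G\|\,\|x(T)\|_2$. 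Applying Lemma \ref{the property of the solution of BQSDE} then produces a unique pair $(y(\cdot),Y(\cdot))\in C_\mathbb{A}([t_0,T];L^2(\mathscr{C}))\times L^2_\mathbb{A}([t_0,T];L^2(\mathscr{C}))$ solving the backward equation, together with a norm bound in terms of $\|Gx(T)\|_2$ and $\|Mx\|_{L^1_\mathbb{A}}$.

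Finally I would chain the two estimates: the forward bound controls $\|x(\cdot)\|_{C_\mathbb{A}}$ by $\mathcal{C}\|\varsigma\|_2$, and inserting this into the backward bound controls $\|(y(\cdot),Y(\cdot))\|$ by $\mathcal{C}\|\varsigma\|_2$ as well, which is exactly \eqref{estimate of FBQSDE}. Uniqueness is inherited from the uniqueness clauses of the two preliminary lemmas applied to the decoupled system. The only point requiring a small amount of care is the verification of adaptedness of the compositions $B\Theta$, $D\Theta$ and of the forcing term $Mx$, since $\Theta$ is only deterministic in time while $B,C,D,M$ carry the adaptedness structure; once this is handled, the result follows by a direct, sequential application of Lemma \ref{the property of the solution of FQSDE} and Lemma \ref{the property of the solution of BQSDE}, and I do not anticipate any genuine obstacle.
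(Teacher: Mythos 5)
Your proposal is correct and follows exactly the route the paper intends: the paper offers no detailed proof of this lemma beyond the remark that it follows from Lemma \ref{the property of the solution of FQSDE} and Lemma \ref{the property of the solution of BQSDE}, which is precisely your decoupled, sequential argument (forward equation with coefficients $A+B\Theta$, $C+D\Theta$, then the backward equation with source $Mx$ and terminal datum $-Gx(T)$, chaining the two estimates). Your verification of the integrability and adaptedness of $B\Theta$, $D\Theta$ and $Mx$ is a useful elaboration of what the paper leaves implicit.
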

\begin{lem}\label{dense}
Let $\Theta(\cdot)$ be an optimal feedback operator of \textbf{Problem (QSLQ)}. Then, for any $t\in [t_0,T]$, the following set
\begin{equation*}
\begin{aligned}
 \mathfrak{S}:= \Big\{x(t;\varsigma);\  x(t;\varsigma)=\varsigma &+\int_{t_0}^{t}\{A(s)+B(s)\Theta(s)\}x(s;\varsigma)ds\\
 &+\int_{t_0}^{t}\{C(s)+D(s)\Theta(s)\}x(s;\varsigma)dW(s),\ \varsigma\in L^2(\mathscr{C}_{t_0})\Big\}
  \end{aligned}
\end{equation*}
is dense in $L^2(\mathscr{C}_t)$.
\end{lem}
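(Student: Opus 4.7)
The plan is to argue by duality, showing that the orthogonal complement of $\mathfrak{S}$ in $L^2(\mathscr{C}_t)$ is trivial. Fix any $\eta \in L^2(\mathscr{C}_t)$ satisfying $\langle \eta, x(t;\varsigma)\rangle = 0$ for every $\varsigma \in L^2(\mathscr{C}_{t_0})$; the goal is to conclude $\eta = 0$.

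First I would introduce the adjoint backward QSDE on $[t_0, t]$,
\begin{equation*}
\begin{cases}
dp(s) = \{(A(s)+B(s)\Theta(s))^* p(s) + (C(s)+D(s)\Theta(s))^* q(s)\} ds + q(s) dW(s), \\
p(t) = \eta,
\end{cases}
\end{equation*}
whose unique solution $(p,q) \in C_\mathbb{A}([t_0,t];L^2(\mathscr{C})) \times L^2_\mathbb{A}([t_0,t];L^2(\mathscr{C}))$ is provided by Lemma \ref{the property of the solution of BQSDE}. The coefficients are deliberately chosen to be adjoint to those of the forward closed-loop equation defining $\mathfrak{S}$, so the BQSDE should pair with the forward equation to produce a conservation law.

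Next I would apply the quantum It\^o product rule to $s \mapsto \langle p(s), x(s;\varsigma)\rangle$, carefully accounting for the Fermion correction involving the parity operator $\Upsilon$ discussed in the introduction. The adjoint structure makes the drift and quadratic-variation contributions cancel pairwise, so the map is $s$-constant and yields the duality identity
\begin{equation*}
\langle \eta, x(t;\varsigma)\rangle = \langle p(t_0), \varsigma\rangle, \quad \forall\, \varsigma \in L^2(\mathscr{C}_{t_0}).
\end{equation*}
The orthogonality hypothesis forces $\langle p(t_0), \varsigma\rangle = 0$ for every $\varsigma$, and since $p(t_0) \in L^2(\mathscr{C}_{t_0})$ by adaptedness, we conclude $p(t_0) = 0$.

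The crux, and the main obstacle, is to upgrade $p(t_0) = 0$ to $\eta = p(t) = 0$, i.e.\ to establish a backward uniqueness property for the BQSDE. My plan here is to apply the quantum It\^o formula to $\|p(s)\|_2^2$ on $[t_0,t]$, control the cross term $\mathrm{Re}\langle p(s), (C(s)+D(s)\Theta(s))^* q(s)\rangle$ by a weighted Cauchy--Schwarz, and use the sign of the $\|q\|_2^2$ contribution arising from the Fermion quadratic variation to absorb the remaining $q$-dependence; the coefficient bounds needed come from (\ref{the condition of ABCD}) together with $\Theta \in L^2([t_0,T];\mathcal{L}(L^2(\mathscr{C});U))$. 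After this absorption one would be left with a pure Gronwall-type inequality
\begin{equation*}
\|p(s)\|_2^2 \leq C \int_{t_0}^s \|p(r)\|_2^2 \, dr,
\end{equation*}
which, combined with $p(t_0) = 0$, forces $p \equiv 0$, hence $q \equiv 0$, and in particular $\eta = 0$. Carrying out this sign-control in the Fermion It\^o rule, which has no classical analogue and depends on the parity-operator framework of the paper, is the step I expect to require the most care.
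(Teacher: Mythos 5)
Your duality setup coincides with the paper's: there one takes a nonzero $\rho\in L^2(\mathscr{C}_t)$ orthogonal to $\mathfrak{S}$, introduces the adjoint BQSDE \eqref{BQSDE-dense} with terminal datum $\rho$ at time $t$, pairs it with the forward closed-loop flow via the Fermion It\^{o} formula, and deduces $\alpha(t_0)=0$ from the arbitrariness of $\varsigma$. One small correction to your version of this part: for the drift and quadratic-variation contributions to cancel in $d\langle p(s),x(s;\varsigma)\rangle$ the drift must be $-\{(A+B\Theta)^*p+(C+D\Theta)^*q\}$, as in \eqref{BQSDE-dense}; with the plus sign you wrote, the pairing is not conserved and the identity $\langle\eta,x(t;\varsigma)\rangle=\langle p(t_0),\varsigma\rangle$ fails. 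That is a fixable sign issue.

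The genuine gap is in your final step. Backward uniqueness for the adjoint BQSDE from its \emph{initial} value is false, and the energy/Gronwall mechanism you describe cannot be made to work, because the Fermion quadratic variation enters with the unfavorable sign for that direction of time. Integrating the It\^{o} identity for $\|p(s)\|_2^2$ forward from $t_0$ (with $p(t_0)=0$) gives
\begin{equation*}
\|p(s)\|_2^2=\int_{t_0}^{s}\Bigl\{\|q(r)\|_2^2-2\,\mathrm{Re}\langle (A+B\Theta)p,p\rangle-2\,\mathrm{Re}\langle (C+D\Theta)p,q\rangle\Bigr\}\,dr,
\end{equation*}
and after any weighted Cauchy--Schwarz the term $\int_{t_0}^{s}\|q\|_2^2\,dr$ survives on the right-hand side with a positive coefficient; it cannot be absorbed into $\int\|p\|_2^2$. (The sign of the $\|q\|_2^2$ contribution helps in the opposite direction, namely for the standard a priori estimate propagated from the terminal time, which is how Lemma \ref{the property of the solution of BQSDE} is proved.) Concretely, in the model case $A+B\Theta=0$, $C+D\Theta=0$ the adjoint solution is the martingale $p(s)=m(\eta\,|\,L^2(\mathscr{C}_s))$; taking $\eta=W(t)-W(t_0)\neq 0$ gives $p(t_0)=0$ while $p\not\equiv 0$, so the implication ``$p(t_0)=0\Rightarrow p\equiv 0$'' that your Gronwall step is meant to deliver is simply false for this class of equations. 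You correctly identified this upgrade as the crux, but the proposed mechanism does not close it. The paper finishes differently: it argues that $\alpha(s)=0$ for every $s\in[t_0,t)$ and then uses continuity of $\alpha(\cdot)$ at $s=t$ to contradict $\alpha(t)=\rho\neq0$; whatever additional justification that step may deserve, it is not an energy estimate, and your substitute for it does not go through.
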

\begin{proof}
Let us prove this conclusion by contradiction. If this is not the case, then, for given $t\in [t_0,T]$, we can find a nonzero $\rho\in L^2(\mathscr{C}_t)$ such that
\begin{equation}\label{assume}
\langle \rho, x(t;\varsigma)\rangle=0,\quad x(t;\varsigma)\in\mathfrak{S}.
\end{equation}
For the above $\rho$, we consider the following BQSDE:
\begin{equation}\label{BQSDE-dense}
\left\{
\begin{aligned}
&d\alpha(s)=-\left\{(A(s)+B(s)\Theta(s))^*\alpha(s)+(C(s)+D(s)\Theta(s))^*\beta(s)\right\}ds\\
&\hspace{1.4cm}+\beta(s)dW(s),\quad\quad {\rm in}\  [t_0,t],\\
&\alpha(t)=\rho.
\end{aligned}
\right.
\end{equation}
It is clear that \eqref{BQSDE-dense} admits a unique solution  $(\alpha(\cdot),\beta(\cdot))\in C_\mathbb{A}([t_0,t];L^2(\mathscr{C}))\times L^2_\mathbb{A}([t_0,t];L^2(\mathscr{C}))$.
Applying Fermion It\^{o}'s formula \cite[Theorem 5.2]{A.H-2}, we can obtain that
\begin{align*}
&  \langle\alpha(t), x(t;\varsigma)\rangle-  \langle\alpha(t_0), \varsigma\rangle \\
  &=-\int_{t_0}^{t} \langle(A(s)+B(s)\Theta(s))^*\alpha(s)+(C(s)+D(s)\Theta(s))^*\beta(s), x(s;\varsigma)\rangle ds\\
  &\hspace{5mm} +\int_{t_0}^{t} \langle \alpha(s),\{A(s)+B(s)\Theta(s)\}x(s;\varsigma)\rangle ds+\int_{t_0}^{t}\langle\beta(s), \{C(s)+D(s)\Theta(s)\}x(s;\varsigma) \rangle ds.
\end{align*}
This, together with \eqref{assume}, shows that
\begin{equation*}
 \langle\alpha(t_0), \varsigma\rangle =0.
\end{equation*}
According to the arbitrariness of $\varsigma$, $\alpha(t_0)=0.$

Similarly, for any $s\in [t_0,t)$, we can obtain that $\alpha(s)=0.$ Moreover, due to the continuity of $\alpha(\cdot)$, this contradicts $\alpha(t)=\rho\neq0$.
\end{proof}

For the sake of subsequent research, consider the following QSDE:
\begin{equation}\label{QSDE-widetilde-x}
 \left\{
 \begin{aligned}
   d\widetilde{x}(t)=&\left\{-A(t)-B(t)\Theta(t)+(C(t)+D(t)\Theta(t))^2\right\}^*\widetilde{x}(t)dt\\
  & \hspace{0.5mm}-\{C(t)+D(t)\Theta(t)\}^*\widetilde{x}(t)dW(t),\  {\rm{in}}\  [t_0,T],\\
   \widetilde{x}(t_0)=&\gamma,
 \end{aligned}
 \right.
\end{equation}
where $\gamma\in L^2(\mathscr{C}_{t_0})$. Clearly, the equation \eqref{QSDE-widetilde-x} admits a unique solution $\widetilde{x}(\cdot)\in C_\mathbb{A}([t_0,T];L^2(\mathscr{C}))$.
Next, we consider the approximated solutions of \eqref{FBQSDE} and \eqref{QSDE-widetilde-x}, respectively.
Let $\{e_j\}_{j=1}^\infty$  be an orthonormal basic of $L^2(\mathscr{C})$, and let $\Gamma_n$ denote the orthonormal projection from $L^2(\mathscr{C})$  onto its subspace $\textrm{span} \{e_j;\hspace{0.5mm} 1 \leq  j \leq n\}$.

For any $\varsigma, \gamma\in L^2(\mathscr{C}_{t_0})$, consider the following approximated QSDEs:
\begin{equation}\label{FBQSDE-xn-yn}
 \left\{
\begin{aligned}
 &dx_n(t)=\{A(t)+B(t)\Theta(t)\}x_n(t)dt+\{C(t)+D(t)\Theta(t)\}x_n(t)dW(t),\ & {\rm{in}}\ [t_0,T],\\
&dy_n(t)=-\{A(t)^* y_n(t)+C(t)^* Y_n(t)-M(t)x_n(t)\}dt+Y_n(t)dW(t),\ &{\rm{in}}\ [t_0, T],\\
&x_n(t_0)=\Gamma_n\varsigma,\quad y_n(T)=-Gx_n(T),&
\end{aligned}
 \right.
\end{equation}
and
\begin{equation}\label{QSDE-widetilde-xn}
 \left\{
 \begin{aligned}
   d\widetilde{x}_n(t)=&\hspace{0.5mm}\left\{-A(t)-B(t)\Theta(t)+(C(t)+D(t)\Theta(t))^2\right\}^*\widetilde{x}_n(t)dt\\
  &\hspace{0.5mm} -\left\{C(t)+D(t)\Theta(t)\right\}^*\widetilde{x}_n(t)dW(t),\ {\rm{in}}\  [t_0,T],\\
  \widetilde{x}_n(t_0)= &\hspace{0.5mm} \Gamma_n\gamma.
 \end{aligned}
 \right.
\end{equation}
By Lemma \ref{the result on FBQSDE}, the equation \eqref{FBQSDE-xn-yn} has a unique solution $(x_n(\cdot),y_n(\cdot),Y_n(\cdot))\in C_\mathbb{A}([t_0,T];L^2(\mathscr{C}))\times C_\mathbb{A}([t_0,T];L^2(\mathscr{C}))\times L^2_\mathbb{A}([t_0,T];L^2(\mathscr{C}))$, and  it holds that
\begin{equation}\label{estimate of FBQSDE-xnynYn}
\|(x_n(\cdot),y_n(\cdot),Y_n(\cdot))\|_{C_\mathbb{A}([t_0,T];L^2(\mathscr{C}))\times C_\mathbb{A}([t_0,T];L^2(\mathscr{C}))\times L^2_\mathbb{A}([t_0,T];L^2(\mathscr{C}))}\leq \mathcal{C}\|\varsigma\|_2.
\end{equation}
By Lemma \ref{the property of the solution of FQSDE}, the equation \eqref{QSDE-widetilde-xn} admits a unique solution $\widetilde{x}_n(\cdot) \in C_\mathbb{A}([t_0,T];L^2(\mathscr{C}))$, and
\begin{equation}\label{estimate of FQSDE-xn}
\|\widetilde{x}_n(\cdot)\|_{C_\mathbb{A}([t_0,T];L^2(\mathscr{C}))}\leq \mathcal{C}\|\varsigma\|_2.
\end{equation}
Then, we have the following result, and the proof is provided in the Appendix.

\begin{lem}\label{approximate of FBQSDEs}
Under the given conditions above, for any $\varsigma,\gamma\in L^2(\mathscr{C})$, let $(x(\cdot),y(\cdot),Y(\cdot)) ($resp. $\widetilde{x}(\cdot))$ satisfy \eqref{FBQSDE} $($resp. \eqref{QSDE-widetilde-x}$)$,  it holds that
\begin{equation}\label{finite approximation}
\left\{
  \begin{matrix}
&\lim\limits_{n\to\infty}x_n(\cdot)=x(\cdot),&\ {\rm{ in}}\ C_\mathbb{A}([t_0,T]; L^2(\mathscr{C})),\\
&\lim\limits_{n\to\infty}y_n(\cdot)=y(\cdot),&\ {\rm{in}}\ C_\mathbb{A}([t_0,T]; L^2(\mathscr{C})),\\
&\hspace{0.7mm}\lim\limits_{n\to\infty}Y_n(\cdot)\hspace{0.1mm}=Y(\cdot),&\ {\rm{in}}\ L^2_\mathbb{A}([t_0,T]; L^2(\mathscr{C})),\\
&\lim\limits_{n\to\infty}\widetilde{x}_n(\cdot)=\widetilde{x}(\cdot),&\ {\rm{ in}}\ C_\mathbb{A}([t_0,T]; L^2(\mathscr{C})).
\end{matrix}
\right.
\end{equation}
\end{lem}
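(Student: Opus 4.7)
The strategy is to exploit the linearity of all four equations and apply the well-posedness estimates of Lemma \ref{the property of the solution of FQSDE} and Lemma \ref{the property of the solution of BQSDE} to the \emph{difference} processes, for which the initial/terminal perturbations are exactly $\Gamma_n\varsigma-\varsigma$ and $\Gamma_n\gamma-\gamma$. Since $\{e_j\}_{j=1}^\infty$ is an orthonormal basis of $L^2(\mathscr{C})$, the projections satisfy $\lim_{n\to\infty}\|\Gamma_n\zeta-\zeta\|_2=0$ for every $\zeta\in L^2(\mathscr{C})$, so once all errors are dominated by these two quantities the convergence is immediate.

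First I would handle the forward pieces. Subtracting \eqref{FBQSDE} from the first equation of \eqref{FBQSDE-xn-yn}, the difference $x_n(\cdot)-x(\cdot)$ solves a homogeneous linear QSDE with coefficients $A+B\Theta\in L^1_{\mathbb{A}}$ and $C+D\Theta\in L^2_{\mathbb{A}}$ (both inclusions follow from the hypotheses on $A,B,C,D$ together with $\Theta\in L^2$), forcing terms $f\equiv g\equiv 0$, and initial datum $\Gamma_n\varsigma-\varsigma$. Lemma \ref{the property of the solution of FQSDE} then yields
\begin{equation*}
\sup_{t\in[t_0,T]}\|x_n(t)-x(t)\|_2\leq \mathcal{C}\|\Gamma_n\varsigma-\varsigma\|_2\longrightarrow 0.
\end{equation*}
An identical argument applied to $\widetilde{x}_n(\cdot)-\widetilde{x}(\cdot)$, whose coefficients $-A-B\Theta+(C+D\Theta)^2$ and $-(C+D\Theta)^*$ lie in $L^1_{\mathbb{A}}$ and $L^2_{\mathbb{A}}$ respectively, gives $\sup_t\|\widetilde{x}_n(t)-\widetilde{x}(t)\|_2\leq \mathcal{C}\|\Gamma_n\gamma-\gamma\|_2\to 0$.

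Next I would treat the backward pair. Subtracting the backward equations, $(y_n-y,Y_n-Y)$ solves the linear BQSDE
\begin{equation*}
\left\{\begin{aligned}
&d(y_n-y)(t)=\bigl\{A(t)^*(y_n-y)(t)+C(t)^*(Y_n-Y)(t)+h_n(t)\bigr\}dt+(Y_n-Y)(t)dW(t),\\
&(y_n-y)(T)=-G(x_n-x)(T),
\end{aligned}\right.
\end{equation*}
with forcing $h_n(t):=M(t)(x_n(t)-x(t))$. Applying Lemma \ref{the property of the solution of BQSDE},
\begin{equation*}
\|(y_n-y,Y_n-Y)\|^2_{C_{\mathbb{A}}\times L^2_{\mathbb{A}}}\leq \mathcal{C}\Bigl(\|G(x_n-x)(T)\|_2^2+\|h_n\|_{L^1_{\mathbb{A}}([t_0,T];L^2(\mathscr{C}))}^2\Bigr).
\end{equation*}
The first term is bounded by $\|G\|^2\sup_t\|x_n(t)-x(t)\|_2^2$, and the forcing is controlled by $\|h_n\|_{L^1}\leq \bigl(\int_{t_0}^T\|M(s)\|\,ds\bigr)\sup_t\|x_n(t)-x(t)\|_2$, with $M\in L^1_{\mathbb{A}}$. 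Both therefore vanish as $n\to\infty$ by the forward estimate already established.

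The main obstacle is not analytical but bookkeeping: one must verify that the composite coefficients $A+B\Theta$, $C+D\Theta$, and $(C+D\Theta)^2$ inherit the exact integrability/adaptedness hypotheses required by Lemmas \ref{the property of the solution of FQSDE} and \ref{the property of the solution of BQSDE}, and that the parity-related subtleties of the Fermion diffusion (already encoded in those lemmas through the anticommutative It\^o theory) do not introduce extra cross terms in the difference equations. Because the QSDEs are linear and share their coefficients across $n$, these verifications proceed by direct inspection, and no nontrivial compactness argument is needed.
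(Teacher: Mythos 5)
Your proof is correct, but it is not the paper's route for the backward components, so a comparison is in order. For the forward pieces $x_n-x$ and $\widetilde{x}_n-\widetilde{x}$ you and the paper do essentially the same thing: the paper writes out the integral equation for the difference, estimates the drift and diffusion terms via H\"older's inequality and the It\^o--Clifford isometry, and closes with a Gronwall-type argument — which is exactly the content of the a priori bound \eqref{the estimate of the solution of FQSDE} that you invoke directly after checking that $B\Theta\in L^1$, $D\Theta\in L^2$ and $(C+D\Theta)^2\in L^1$. For the backward pair, however, the paper does \emph{not} apply the stability estimate of Lemma \ref{the property of the solution of BQSDE} to the difference; instead it re-derives stability by hand, using the noncommutative martingale representation of Lemma \ref{cor of martingale representation} to write $Y$ and $Y_n$ through kernels $\mathcal{K},\mathcal{K}_n$ and martingale parts $l,l_n$, estimating all terms on a terminal subinterval $[\widetilde{T}_1,T]$ chosen so that the contraction constant is at most $\tfrac12$, and then iterating backwards in time. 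Your route — noting that \eqref{FBQSDE} is decoupled, so $(y_n-y,Y_n-Y)$ solves a BQSDE of the form \eqref{BQSDE-solvability} with terminal value $-G(x_n-x)(T)$ and forcing controlled by $\sup_t\|x_n(t)-x(t)\|_2$, then citing \eqref{the estimate of the solution of BQSDE} with a constant independent of $n$ — is shorter and legitimate, since uniqueness plus linearity make the solution map $(\xi,h)\mapsto(y,Y)$ linear; the paper's longer computation produces an explicit representation of $Y_n-Y$ that is not reused, so your shortcut loses nothing. One small slip: subtracting the backward equations of \eqref{FBQSDE-xn-yn} and \eqref{FBQSDE} gives drift $-A^*(y_n-y)-C^*(Y_n-Y)+M(x_n-x)$, not the sign pattern you wrote; this is harmless because Lemma \ref{the property of the solution of BQSDE} is insensitive to replacing $A,C$ by $-A,-C$, but it should be stated correctly.
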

\section{Pontryagin maximum principle for Problem (QSLQ)}\label{QSLQ problem}
\indent\indent
This section focuses on deriving the Pontryagin maximum principle for \textbf{Problem (QSLQ)}.
\begin{thm}\label{Pontryagin-type maximum principle for SLQ problem}
Let \textbf{Problem (QSLQ)} be solvable at $\eta\in L^2(\mathscr{C}_{t_0})$ with $(\bar{x}(\cdot),\bar{u}(\cdot))$ being an optimal pair of quantum stochastic control system \eqref{FQSDE-LQ-inital}. Then, for the solution $(y(\cdot), Y(\cdot))$ to 
 \begin{equation}\label{SLQ-B-y}
\left\{
\begin{aligned}
&dy(t)=-\left\{A(t)^*y(t)+C(t)^*Y(t)-M(t)\bar{x}(t)\right\}dt+Y(t)dW(t),\ {\rm{in}}\ [t_0,T],\\
&y(T)=-G\bar{x}(T),
\end{aligned}
\right.
\end{equation}
and the realxed transpsotion solution $(\phi(\cdot), \Phi^{(\cdot)},\widehat{\Phi}^{(\cdot)})$, introduced by \cite[Definition 1.1]{W.W.The relaxed transposition solution}, to the following adjoint equation
\begin{equation}\label{SLQ-B-p}
\left\{
\begin{aligned}
&d\phi(t)=-\left\{A^*\phi+\phi A+C^*\phi C+\Phi \Upsilon C + C^*\Phi\Upsilon-M\right\}dt+\Phi(t) dW(t),\ {\rm{in}}\ [t_0,T],\\
&\phi(T)=-G,
\end{aligned}
\right.
\end{equation}
it holds that
\begin{equation}\label{Firstly condition}
  R(t)\bar{u}(t)-B(t)^*y(t)-D(t)^*Y(t)=0,\ {\rm{a.e.}}\ [t_0,T],
\end{equation}
and
\begin{equation}\label{Secondly condition}
{\rm{ Re}}\langle \{R(t)-D(t)^*\phi(t)D(t)\}u,u\rangle\geq0,\  {\rm{a.e.}}\ [t_0,T],\   u\in \mathcal{U}[t_0,T].
\end{equation}
\end{thm}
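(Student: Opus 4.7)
The plan is a first- and second-order variational analysis of $\mathcal{J}$ at $(\bar{x}(\cdot),\bar{u}(\cdot))$, with the Fermion It\^{o} product formula handling the first-order duality and the relaxed transposition framework of \cite{W.W.The relaxed transposition solution} handling the second-order one. Fix an arbitrary $u(\cdot)\in\mathcal{U}[t_0,T]$ and $\varepsilon\in\mathbb{R}$, and set $u^\varepsilon(\cdot):=\bar{u}(\cdot)+\varepsilon u(\cdot)$. Since \eqref{FQSDE-LQ-inital} is linear in $(x,u)$, the quantity $x_1(\cdot):=\varepsilon^{-1}\bigl(x(\cdot\hspace{0.3mm};t_0,\eta,u^\varepsilon)-\bar{x}(\cdot)\bigr)$ is independent of $\varepsilon$ and is the unique solution in $C_\mathbb{A}([t_0,T];L^2(\mathscr{C}))$ of
\begin{equation*}
\left\{
\begin{aligned}
dx_1(t)&=\{A(t)x_1(t)+B(t)u(t)\}dt+\{C(t)x_1(t)+D(t)u(t)\}dW(t),\\
x_1(t_0)&=0.
\end{aligned}
\right.
\end{equation*}
Substituting into \eqref{Quadratic cost functional} gives the exact decomposition $\mathcal{J}(t_0,\eta;u^\varepsilon)-\mathcal{J}(t_0,\eta;\bar{u})=\varepsilon\, I_1(u)+\varepsilon^2\, I_2(u)$, with $I_1(u)=\mathrm{Re}\int_{t_0}^T\{\langle M\bar{x},x_1\rangle+\langle R\bar{u},u\rangle_U\}dt+\mathrm{Re}\langle G\bar{x}(T),x_1(T)\rangle$ and an analogous purely quadratic $I_2(u)$ depending only on $(x_1,u)$. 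Optimality of $\bar{u}(\cdot)$ forces $I_1(u)=0$ and $I_2(u)\geq 0$ for every $u(\cdot)$.

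To convert $I_1(u)=0$ into \eqref{Firstly condition} I would pair $x_1(\cdot)$ with the solution $(y(\cdot),Y(\cdot))$ of \eqref{SLQ-B-y} by applying the Fermion It\^{o} product formula to $y(t)^{*}x_1(t)$ and then taking $m(\cdot)$. The diagonal $A^{*}$-$A$ and $C^{*}$-$C$ terms cancel, while the $M\bar{x}$ driver of $y$ absorbs the $\langle M\bar{x},x_1\rangle$ contribution of $I_1$, so the identity reduces to $-\mathrm{Re}\langle G\bar{x}(T),x_1(T)\rangle-\mathrm{Re}\int_{t_0}^T\langle M\bar{x},x_1\rangle dt=\mathrm{Re}\int_{t_0}^T\{\langle B^{*}y,u\rangle_U+\langle D^{*}Y,u\rangle_U\}dt$. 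Plugging this into $I_1(u)=0$ yields $\mathrm{Re}\int_{t_0}^T\langle R\bar{u}-B^{*}y-D^{*}Y,u\rangle_U\,dt=0$; arbitrariness of $u(\cdot)\in L^2([t_0,T];U)$ and self-adjointness of $R(t)$ then give \eqref{Firstly condition} pointwise a.e.

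For $I_2(u)\geq 0$ no bilinear pairing of $x_1$ with itself can be built from the $L^2$-valued BSDE \eqref{SLQ-B-y}, so I invoke the relaxed transposition solution $(\phi(\cdot),\Phi^{(\cdot)},\widehat{\Phi}^{(\cdot)})$ to the operator-valued adjoint equation \eqref{SLQ-B-p}. I would specialize its defining transposition identity, in the sense of \cite[Definition 1.1]{W.W.The relaxed transposition solution}, to $z_1(\cdot)=z_2(\cdot)=x_1(\cdot)$ with $\xi_i=0$, $\mu_i=Bu$, $\nu_i=Du$. Because $\phi(T)=-G$, the boundary term produces $-\mathrm{Re}\langle Gx_1(T),x_1(T)\rangle$; the $-M$ drift produces $\mathrm{Re}\int_{t_0}^T\langle Mx_1,x_1\rangle\,dt$; the cross terms coupling $x_1$ with $Bu$ are precisely what the $\Phi\Upsilon C+C^{*}\Phi\Upsilon$ correction in \eqref{SLQ-B-p} is tailored to absorb; and only $-\mathrm{Re}\int_{t_0}^T\langle\phi(t)D(t)u(t),D(t)u(t)\rangle\,dt$ survives. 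Consequently $I_2(u)=\tfrac{1}{2}\mathrm{Re}\int_{t_0}^T\langle(R-D^{*}\phi D)u,u\rangle_U\,dt\geq 0$, and localizing $u(\cdot)$ to $\chi_{[t,t+h]}(\cdot)u$ for fixed $u\in U$ and letting $h\downarrow 0$ yields \eqref{Secondly condition}.

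The main obstacle is the non-commutativity of $W(\cdot)$ with the coefficients: the Fermion It\^{o} product formula generates cross-variation terms twisted by the parity operator $\Upsilon$, which is exactly why \eqref{SLQ-B-p} carries the $\Phi\Upsilon C+C^{*}\Phi\Upsilon$ terms rather than the classical $\Phi C+C^{*}\Phi$. Tracking those $\Upsilon$-twists carefully enough to witness the cancellation inside the transposition identity is the technical heart of the proof, and it is the reason one is forced to work with a relaxed, rather than strong, transposition solution of the operator-valued BSDE \eqref{SLQ-B-p}.
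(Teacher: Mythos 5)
Your first-order argument is correct and coincides with the paper's Step~1: a convex perturbation $u^{\varepsilon}=\bar{u}+\varepsilon u$, duality of the variational state $x_1$ with the solution $(y,Y)$ of \eqref{SLQ-B-y} via the Fermion It\^{o} product formula, and arbitrariness of $u$ to pass from the integrated identity to \eqref{Firstly condition} pointwise. (The paper perturbs in the direction $u-\bar u$ with $\varepsilon\in[0,1]$ and first obtains an inequality, but this is the same argument.)

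The second half contains a genuine error. The identity you assert for arbitrary $u(\cdot)\in\mathcal{U}[t_0,T]$, namely $I_2(u)=\tfrac12\mathrm{Re}\int_{t_0}^T\langle(R-D^*\phi D)u,u\rangle_U\,dt$, is false. When you plug $z_1=z_2=x_1$ with $\mu_i=Bu$, $\nu_i=Du$ into the transposition identity for \eqref{SLQ-B-p}, the cross terms $\langle\phi x_1,Bu\rangle+\langle\phi Bu,x_1\rangle$, $\langle\phi Cx_1,Du\rangle+\langle\phi Du,Cx_1\rangle$, and the corresponding $\Phi$-corrections do \emph{not} cancel: they are exactly the terms that, in the Riccati completion of squares, produce the operator $L=B^*P+D^*PC$ and the $L^*K^{-1}L$ term of \eqref{quantum Riccati equation}. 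Your attribution of their disappearance to the $\Phi\Upsilon C+C^*\Phi\Upsilon$ drift terms is a misreading of their role: those terms compensate the quadratic covariation between the martingale part $\Phi\,dW$ of $\phi$ and the diffusion $Cz\,dW$ of the state, not the drift perturbation $Bu$. If your identity held for all $u$, the condition $I_2(u)\ge 0$ would already be pointwise and the Riccati equation would carry no $L^*K^{-1}L$ term. The statement you need is only asymptotic: for $u=\chi_{[\tau,\tau+h]}v$ the variational state satisfies $\sup_t\|x_1(t)\|_2=O(\sqrt h)$, so every cross term carrying one factor of $x_1$ and one factor of $v$ over an interval of length $h$ is $O(h^{3/2})=o(h)$, and only $\int\chi_{[\tau,\tau+h]}\langle(R-D^*\phi D)v,v\rangle\,dt$ survives at order $h$. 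This is precisely why the paper runs Step~2 with the spike variation from the outset, splitting $x^{\varepsilon}-\bar x=x_2^{\varepsilon}+x_3^{\varepsilon}$ with $\|x_2^{\varepsilon}\|_2=O(\sqrt\varepsilon)$ and $\|x_3^{\varepsilon}\|_2=O(\varepsilon)$ as in \eqref{estimate-x2-x3}, so that the drift-perturbed part contributes only $o(\varepsilon)$ to the quadratic expansion and the transposition identity \eqref{ito-px2-x2-1} is applied after localization. Your proof is repairable by reordering: localize first, then invoke the transposition identity and estimate the cross terms as $o(h)$ before applying the Lebesgue differentiation theorem; as written, the key intermediate identity does not hold.
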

\begin{proof}
We divide the proof into two steps.

{\bf{Step 1.}} In this step, we shall prove \eqref{Firstly condition} by the convex perturbation technique.
For the optimal pair $(\bar{x}(\cdot),\bar{u}(\cdot))$ and a control variable $u(\cdot)\in L^2([t_0,T]; U)$, 
we have that, for $\varepsilon\in[0,1]$,
\begin{equation*}
u^\varepsilon(\cdot)=\bar{u}(\cdot)+\varepsilon(u(\cdot)-\bar{u}(\cdot))=(1-\varepsilon)\bar{u}(\cdot)+\varepsilon u(\cdot)\in L^2([t_0,T]; U).
\end{equation*}
Let $x^\varepsilon(\cdot)$ be the solution of \eqref{FQSDE-LQ-inital} corresponding to the control $u^\varepsilon(\cdot)$, that is,
\begin{equation*}
\left\{
\begin{aligned}
dx^\varepsilon(t)=&\{A(t)x^\varepsilon(t)+B(t)u^\varepsilon(t)\}dt+\{C(t)x^\varepsilon(t)+D(t)u^\varepsilon(t)\}dW(t),\  \textrm{in}\ [t_0,T],\\
x^\varepsilon(t_0)=&\eta.
\end{aligned}
\right.
\end{equation*}
For any $t\in [0,T]$,
let
\begin{equation}\label{definition of x^e}
x_1^\varepsilon(t):=\frac{1}{\varepsilon}(x^\varepsilon(t)-\bar{x}(t)),\quad \delta u(t):=u(t)-\bar{u}(t).
\end{equation}
It can be easily seen that $x_1^\varepsilon(\cdot)$ is the solution to the following QSDE:
\begin{equation}\label{LQ-FSDE}
\left\{
\begin{aligned}
dx_1^\varepsilon(t)=&\left\{A(t)x_1^\varepsilon(t)+B(t)\delta u(t)\right\}dt+\left\{C(t)x_1^\varepsilon(t)+D(t)\delta u(t)\right\}dW(t),\ \textrm{in}\ [t_0,T],\\
x_1^\varepsilon(t_0)=&0.
\end{aligned}
\right.
\end{equation}
Since $(\bar{x}(\cdot),\bar{u}(\cdot))$ is an optimal pair of \textbf{Problem (QSLQ)}, and $M(\cdot),R(\cdot),G$ are positive, we obtain that
\begin{equation}\label{LQ-1}
\begin{aligned}
  0&\leq \lim_{\varepsilon\to0}\frac{\mathcal{J}(t_0,\eta;u^\varepsilon(\cdot))-\mathcal{J}(t_0,\eta;\bar{u}(\cdot)) }{\varepsilon}\\
  &=\textrm{Re}\int_{t_0}^T\left\{\langle  M(t)\bar{x}(t), x_1^\varepsilon(t)\rangle+\langle R(t)\bar{u}(t),\delta u(t)\rangle_U\right\}dt+\textrm{Re}\langle G\bar{x}(T), x_1^\varepsilon(T)\rangle.
  \end{aligned}
\end{equation}
Applying Fermion It\^{o}'s formula \cite[Theorem 5.2]{A.H-2} to $\langle y(t),x_1^\varepsilon(t)\rangle$, one has
\begin{equation}\label{ito-x1-y}
 \begin{aligned}
 &-\langle G\bar{x}(T) ,x_1^\varepsilon(T)\rangle\\
& =\int_{t_0}^T\langle  y(t),A(t)x_1^\varepsilon(t)+B(t)\delta u(t)\rangle dt+\int_{t_0}^T\langle Y(t), C(t)x_1^\varepsilon(t)+D(t)\delta u(t)\rangle dt \\
 &\quad-\int_{t_0}^T\left\{\langle A(t)^*y(t), x_1^\varepsilon(t)\rangle+\langle Y(t), C(t)x_1^\varepsilon(t)\rangle-\langle M(t)\bar{x}(t), x_1^\varepsilon(t)\rangle\right\} dt\\
& =\int_{t_0}^T\{\langle y(t), B(t)\delta u(t)\rangle +\langle Y(t), D(t)\delta u(t)\rangle+\langle M(t)\bar{x}(t), x_1^\varepsilon(t)\rangle\} dt.
 \end{aligned}
\end{equation}
By substituting \eqref{ito-x1-y} into \eqref{LQ-1}, we obtain that, for any $u\in L^2([t_0,T];U)$,
\begin{equation*}
 \textrm{Re}\int_{t_0}^T\langle R(t)\bar{u}(t)-B(t)^*y(t)-D(t)^*Y(t), \delta u(t)\rangle_Udt\geq0.
\end{equation*}
Then, 
\begin{equation*}
   R(t)\bar{u}(t)-B(t)^*y(t)-D(t)^*Y(t)=0,\ \textrm{a.e.}\ t\in [t_0,T].
\end{equation*}
{\textbf{Step 2.}} In this step, we shall prove \eqref{Secondly condition} by the spike variation method.
For each $\varepsilon>0$ and $\tau\in [t_0,T-\varepsilon)$, let $E_\varepsilon:=[\tau,\tau+\varepsilon]$. For any $u\in L^2([t_0,T];U)$, put
\begin{equation*}
u^\varepsilon(t):=\left\{
\begin{aligned}
\bar{u}(t),\quad    &t\in[t_0,T]\setminus E_\varepsilon,\\
u(t),\quad    &t\in E_\varepsilon.
\end{aligned}
\right.
\end{equation*}
Let $x^\varepsilon(\cdot)$ be the solution to \eqref{FQSDE-LQ-inital} with the corresponding to the control $u^\varepsilon(\cdot)$.
 Consider the following two QSDEs:
\begin{equation}\label{FQSDE-LQ-x2}
\left\{
\begin{aligned}
dx_2^\varepsilon(t)=&A(t)x_2^\varepsilon(t)dt+\left\{C(t)x_2^\varepsilon(t)+\chi_{E_\varepsilon}(t)D(t)\delta u(t)\right\}dW(t),\  \textrm{in}\ [t_0,T],\\
x_2^\varepsilon(t_0)=&0,
\end{aligned}
\right.
\end{equation}
and
\begin{equation}\label{FQSDE-LQ-x3}
\left\{
\begin{aligned}
dx_3^\varepsilon(t)&=\left\{A(t)x_3^\varepsilon(t)+\chi_{E_\varepsilon}(t)B(t)\delta u(t)\right\}dt+C(t)x_3^\varepsilon(t)dW(t),\  \textrm{in}\ [t_0,T],\\
x_3^\varepsilon(t_0)&=0.
\end{aligned}
\right.
\end{equation}
It is clear that $x^\varepsilon-\bar{x}=x_2^\varepsilon+x_3^\varepsilon.$ By \cite[Theorem 3.2]{W.W.Pontryagin}, it holds that
\begin{equation}\label{estimate-x2-x3}
\sup_{t\in[t_0,T]}\|x_2^\varepsilon(t)\|_2\leq \mathcal{C} \sqrt{\varepsilon},\
\sup_{t\in[t_0,T]}\|x_3^\varepsilon(t)\|_2\leq \mathcal{C} \varepsilon.
\end{equation}
Hence, we can get
\begin{equation}\label{cost functional with estimate}
\begin{aligned}
 &\mathcal{J}(t_0,\eta;u^\varepsilon(\cdot))-\mathcal{J}(t_0,\eta;\bar{u}(\cdot)) \\
 & =\textrm{Re}\int_{t_0}^T\left\{\left\langle M(t)\bar{x}(t), x^\varepsilon_2(t)+x_3^\varepsilon(t)\right\rangle+\frac{1}{2}\langle M(t) x^\varepsilon_2(t), x_2^\varepsilon(t)\rangle\right.\\
  &\quad\quad\indent\indent+\chi_{E_\varepsilon}(t)\left.\left(\langle R(t)\bar{u}(t),\delta u(t)\rangle_U+\frac{1}{2}\langle R(t)\delta u(t),\delta u(t)\rangle_U\right) \right\}dt\\
  &\hspace{4.5mm}+\textrm{Re}\langle G\bar{x}(T), x^\varepsilon_2(T)+x_3^\varepsilon(T)\rangle+\frac{1}{2}\textrm{Re}\langle G x^\varepsilon_2(T), x_2^\varepsilon(T)\rangle+\textit{\textbf{o}}(\varepsilon).
\end{aligned}
\end{equation}
Applying Fermion It\^{o}'s formula  \cite[Theorem 5.2]{A.H-2}  to $\langle y(t),x_2^\varepsilon(t)\rangle$ and  $\langle y(t),x_3^\varepsilon(t)\rangle$ again, we have that 
\begin{equation}\label{ito-y-x2}
 -\langle G\bar{x}(T), x_2^\varepsilon(T) \rangle
=\int_{t_0}^T\{\chi_{E_\varepsilon}(t)\langle Y(t), D(t)\delta u(t)\rangle+\langle M(t)\bar{x}(t), x_2^\varepsilon(t)\rangle\} dt,
\end{equation}
and
\begin{equation}\label{ito-y-x3}
 -\langle G\bar{x}(T),x_3^\varepsilon(T)\rangle=  \int_{t_0}^T\{\chi_{E_\varepsilon}(t)\langle y(t),B(t)\delta u(t)\rangle+\langle M(t)\bar{x}(t),x_3^\varepsilon(t)\rangle\} dt.
\end{equation}
From \eqref{ito-y-x2} and \eqref{ito-y-x3}, we obtain that
\begin{equation}\label{ito-y1-x2+x3}
 \begin{aligned}
  &-\langle G\bar{x}(T), x_2^\varepsilon(T)+x_3^\varepsilon(T)\rangle\\
 &\quad=\int_{t_0}^T\{\chi_{E_\varepsilon}(t)(\langle y(t), B(t)\delta u(t)\rangle +\langle Y(t),D(t)\delta u(t)\rangle) +\langle M(t)\bar{x}(t),x_2^\varepsilon(t)+x_3^\varepsilon(t)\rangle\}dt.
 \end{aligned}
\end{equation}
By the definition of the relaxed transposition solution \cite[Theorem 1.1]{W.W.The relaxed transposition solution} to \eqref{SLQ-B-p}, together with \eqref{estimate-x2-x3}, one has that:
\begin{equation}\label{ito-px2-x2-1}
 \begin{aligned}
 &-\langle Gx_2^\varepsilon(T), x_2^\varepsilon(T)\rangle+\int_{t_0}^T\langle M(t)x_2^\varepsilon(t),\ x_2^\varepsilon(t)\rangle dt\\
& =\int_{t_0}^T\chi_{E_\varepsilon}(t)\Big\{\langle \phi(t)D(t)\delta u(t),C(t)x_2^\varepsilon(t)+D(t)\delta u(t)\rangle+\langle \phi(t)C(t)x_2^\varepsilon(t), D(t)\delta u(t) \rangle\Big\}dt+\textit{\textbf{o}}(\varepsilon).
 \end{aligned}
\end{equation}
By substituting \eqref{ito-y1-x2+x3}-\eqref{ito-px2-x2-1} into \eqref{cost functional with estimate}, together with \eqref{Firstly condition}, we obtain that
\begin{equation*}
\begin{aligned}
&\mathcal{J}(t_0,\eta;u^\varepsilon(\cdot))-\mathcal{J}(t_0,\eta;\bar{u}(\cdot))\\
&\hspace{2.8mm}=\textrm{Re}\int_{t_0}^T\chi_{E_\varepsilon}(t)\bigg\{\left\langle R(t)\bar{u}(t)-B(t)^*y(t)-D(t)^*Y(t), \delta u(t)\right\rangle_U \\
&\quad\quad\quad\quad\indent\indent\indent+\frac{1}{2}\langle \left(R(t)-D(t)^*\phi(t)D(t)\right)\delta u(t), \delta u(t)\rangle_U \bigg\}dt+\textit{\textbf{o}}(\varepsilon)\\
&\hspace{2.8mm}=\frac{1}{2}\textrm{Re}\int_{t_0}^T\chi_{E_\varepsilon}(t)\langle \left(R(t)-D(t)^*\phi(t)D(t)\right)\delta u(t), \delta u(t)\rangle_U dt+\textit{\textbf{o}}(\varepsilon).
\end{aligned}
\end{equation*}
Since $\bar{u}(\cdot)$ is an optimal control, $\mathcal{J}(t_0,\eta;u^\varepsilon(\cdot))-\mathcal{J}(t_0,\eta;\bar{u}(\cdot))\geq 0$. Thus,
\begin{equation*}
\frac{1}{2}\textrm{Re}\int_{t_0}^T\chi_{E_\varepsilon}(t)\langle\left(R-D^*\phi D\right)\delta u, \delta u\rangle_Udt\geq \textit{\textbf{o}}(\varepsilon).
\end{equation*}
By Lebesgue differentiation theorem \cite[Theorem 7.10]{Rudin}, for any $t\in[t_0, T)$ and $u(\cdot)\in \mathcal{U}[t_0,T]$, we obtain that
\begin{equation*}
 \textrm{Re} \langle \left(R(t)-D(t)^*\phi(t)D(t)\right) u, u\rangle_U=\lim_{\varepsilon\to0}\frac{1}{\varepsilon}\int_{t}^{t+\varepsilon}\textrm{Re}\langle\left(R-D^*\phi D\right) u,  u\rangle_Ud\tau\geq 0,
\end{equation*}
which gives \eqref{Secondly condition}.
\end{proof}
\begin{rem}
In \cite[Theorem 3.3]{W.W.Pontryagin}, we utilized the spike variation to derive the Pontryagin maximum principle for quantum stochastic control systems, while the convex variation provides an effective method for determining the optimal feedback operator. 
Therefore, Theorem \ref{Pontryagin-type maximum principle for SLQ problem} employs the spike variation and the convex variation. 
\end{rem}

\section{The main result}\label{relationship}
\indent\indent
This section is devoted to investigating the relation between the existence of optimal feedback controls for \textbf{Problem (QSLQ)} and the solvability of the  quantum Riccati equation.

Let $\Theta(\cdot)\in L^2([0,T];\mathcal{L}(L^2(\mathscr{C});U))$ be an optimal feedback operator of \textbf{Problem (QSLQ)}. From \eqref{Firstly condition}, we deduce that
\begin{equation}\label{feedback and By-DY}
R(t)\Theta(t)\bar{x}(t)- B^*(t)y(t)-D^*(t)Y(t)=0,\  {\rm{ a.e.}}\ t\in[t_0,T].
\end{equation}

By Fermion It\^{o}'s formula \cite[Theorem 5.2]{A.H-2}, we can obtain the following result.
\begin{prop}\label{The differentiability of P}
If $P(\cdot)$ is a weak solution to \eqref{quantum Riccati equation}, and $z_1(\cdot)$ and $z_2(\cdot)$ are the solutions to \eqref{to transposition solution QSDE-1} and \eqref{to transposition solution QSDE-2}, respectively, then,  $\langle P(\cdot)z_1(\cdot),z_2(\cdot)\rangle$ is differentiable in $[t_0,T]$, and for any $t\in [t_0,T]$,
\begin{equation*}
\begin{aligned}
d\langle P(t)z_1(t),z_2(t)\rangle&=\langle dP(t)z_1(t),z_2(t)\rangle+\langle P(t)dz_1(t),z_2(t)\rangle+\langle P(t)z_1(t),dz_2(t)\rangle\\
&\hspace{1.1em}+\langle dP(t)dz_1(t),z_2(t)\rangle+\langle dP(t)z_1(t),dz_2(t)\rangle+\langle P(t)dz_1(t),dz_2(t)\rangle.
\end{aligned}
\end{equation*}
\end{prop}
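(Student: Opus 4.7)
The plan is to read the product rule out of the integral identity in Definition~1.5 (the defining property of a weak solution), which is essentially the Fermion It\^o formula for $\langle P(t)z_1(t),z_2(t)\rangle$ in integrated form. Concretely, I would apply the weak solution identity twice, once at time $t$ and once at time $t+h$ for $h>0$ small, using the fact that $z_1,z_2$ solve their QSDEs on the whole interval $[t_0,T]$, so their restrictions to $[t+h,T]$ are solutions with initial data $z_1(t+h),z_2(t+h)$. Subtracting yields
\[
\langle P(t)z_1(t),z_2(t)\rangle-\langle P(t+h)z_1(t+h),z_2(t+h)\rangle=\int_t^{t+h}F(s)\,ds,
\]
where $F$ is the integrand built from $M$, $L^*K^{-1}L$, $P\mu_i$, $PCz_i$, and $P\nu_i$ appearing in the weak identity. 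Under the standing regularity ($P\in C_\mathbb{A}$ is bounded, $C,\nu_i\in L^2$, $\mu_i\in L^2$, $M\in L^1$, $K^{-1}$ densely defined with appropriate bound), $F\in L^1([t_0,T])$, so the Lebesgue differentiation theorem shows that $t\mapsto\langle P(t)z_1(t),z_2(t)\rangle$ is absolutely continuous with a.e.\ derivative equal to $-F(t)$.

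The second step is to reorganize $-F(t)$ into the claimed formal product rule. Using the Riccati equation to expand $\langle\tfrac{dP}{dt}z_1,z_2\rangle=-\langle PAz_1,z_2\rangle-\langle Pz_1,Az_2\rangle-\langle PCz_1,Cz_2\rangle-\langle Mz_1,z_2\rangle+\langle L^*K^{-1}Lz_1,z_2\rangle$ (moving $A^*$ and $C^*$ across the inner product via the adjoint identity, which holds despite the conjugate-linear-first-variable convention), the $PA$- and $A^*P$-contributions inside $\langle dP\,z_1,z_2\rangle$ cancel exactly the $A$-drift pieces of $\langle P\,dz_1,z_2\rangle$ and $\langle Pz_1,dz_2\rangle$, leaving the $\mu_i$-drift pieces. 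The surviving term $-\langle PCz_1,Cz_2\rangle$ together with the two ``$\nu$-terms'' $\langle P(Cz_1+\nu_1),\nu_2\rangle+\langle P\nu_1,Cz_2\rangle$ from $F$ collapse algebraically to $\langle P(Cz_1+\nu_1),Cz_2+\nu_2\rangle$, which is precisely the quadratic covariation $\langle P\,dz_1,dz_2\rangle$ in the Fermion It\^o calculus. The two cross terms $\langle dP\,dz_i,z_j\rangle$ in the statement vanish because $dP$ is purely a $dt$-term, so $dt\cdot dt$ and $dt\cdot dW$ produce no It\^o contribution.

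The main obstacle is the correct identification of the Fermion It\^o quadratic variation in the scalar inner-product setting: at the operator level, the Fermion It\^o product rule carries the parity operator $\Upsilon$ (as is visible in the adjoint equation of Theorem~3.1), and one needs to verify that these parity factors either cancel or do not intrude when one passes to the scalar inner product of two $L^2(\mathscr{C})$-valued semimartingales. Once this structural identification is pinned down, the rest of the argument reduces to the bookkeeping matching above, and the proposition follows. The conjugate-linear-first-variable convention also demands care with signs and with the direction of adjoints, but these are purely notational rather than analytic difficulties.
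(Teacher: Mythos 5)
Your proposal is correct, but it proceeds quite differently from the paper, which offers no real proof of Proposition \ref{The differentiability of P} at all: it simply invokes Fermion It\^{o}'s formula \cite[Theorem 5.2]{A.H-2} and states the product rule. Your route instead extracts the result from Definition \ref{the def of weak solution to Riccati}(ii) itself: applying the identity at $t$ and at $t+h$ (using that the restrictions of $z_1,z_2$ to $[t+h,T]$ solve \eqref{to transposition solution QSDE-1}--\eqref{to transposition solution QSDE-2} with initial data $z_1(t+h),z_2(t+h)$), subtracting, and invoking Lebesgue differentiation. Your algebraic matching is also right: expanding $\langle dP\,z_1,z_2\rangle$ via \eqref{quantum Riccati equation}, the $PA$ and $A^*P$ terms cancel the $A$-drifts of $\langle P\,dz_1,z_2\rangle$ and $\langle Pz_1,dz_2\rangle$, the $-\langle C^*PCz_1,z_2\rangle$ term combines with the two $\nu$-terms of the weak identity to give $\langle P(Cz_1+\nu_1),Cz_2+\nu_2\rangle=\langle P\,dz_1,dz_2\rangle$, and the two $\langle dP\,dz_i,z_j\rangle$ terms vanish since $dP$ carries only a $dt$. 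Two remarks. First, what your argument actually delivers is absolute continuity with the stated derivative a.e.\ in $[t_0,T]$ (the integrand is only $L^1$ since $A,M\in L^1$), not pointwise differentiability everywhere; that is the honest form of the conclusion and is all the paper ever uses. Second, the ``main obstacle'' you identify --- tracking the parity operator $\Upsilon$ through the It\^{o} product rule --- is a genuine issue only for the paper's direct route; your derivation never meets it, because the weak-solution identity is already stated without $\Upsilon$ and the state $m$ kills the $dW$-integrals. On balance your argument is the more defensible one here, since $P(\cdot)$ is only assumed to be a weak solution and a direct application of the operator-level It\^{o} formula to $P(t)z_1(t)$ is not obviously licensed by that hypothesis.
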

For the need of subsequent proof, let us recall the following result \cite{W.D}.
\begin{lem}\cite[Theorem 4.2]{W.D}\label{measurable}
Let $(\Omega,\mathcal{F})$ be a measurable space. Let $F:(\Omega,\mathcal{F})\to 2^H$ be a closed-valued set mapping such that $F(\omega)\neq\emptyset$ for every $\omega\in \Omega$, and for each open set $O\in H$,
\begin{equation*}
  \{\omega\in \Omega; F(\omega)\cap O\neq\emptyset\}\in \mathcal{F}.
\end{equation*}
Then there is an $H$-valued, $\mathcal{F}$-measurable $f$ such that $f(\omega)\in F(\omega)$ for every $\omega\in \Omega$.
\end{lem}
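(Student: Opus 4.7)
The plan is to recognize the statement as a version of the classical Kuratowski--Ryll-Nardzewski measurable selection theorem and to construct the selection $f$ by inductive approximation. I would build a sequence $\{f_k\}_{k \geq 0}$ of countably-valued measurable functions, each taking values in a fixed countable dense subset $\{h_n\} \subset H$, so that $f_k$ approximates $F(\omega)$ within precision $2^{-k}$, and then take a pointwise limit, invoking closedness of $F(\omega)$ to conclude that the limit is a selection.

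The first step is to note the measurability of distance functions: for any $h \in H$ and $r > 0$,
\begin{equation*}
\{\omega \in \Omega : d(h, F(\omega)) < r\} = \{\omega \in \Omega : F(\omega) \cap B(h, r) \neq \emptyset\} \in \mathcal{F},
\end{equation*}
by applying the hypothesis to the open ball $O = B(h, r)$. Hence $\omega \mapsto d(h, F(\omega))$ is $\mathcal{F}$-measurable for every fixed $h$. I then fix a countable dense subset $\{h_n\}_{n \geq 1}$ of $H$, which requires separability of $H$; this is automatic in the intended applications where $H = L^2(\mathscr{C})$ or a related separable Hilbert space.

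Next I construct the approximants inductively. For the base step, set $A_n^0 := \{\omega : d(h_n, F(\omega)) < 1\}$, which is measurable and satisfies $\bigcup_n A_n^0 = \Omega$ because $F(\omega) \neq \emptyset$ and $\{h_n\}$ is dense. Disjointifying to $B_n^0 := A_n^0 \setminus \bigcup_{j < n} A_j^0$ and setting $f_0 := h_n$ on $B_n^0$ gives a measurable $f_0$ with $d(f_0(\omega), F(\omega)) < 1$. Given a countably-valued measurable $f_k$ with $d(f_k(\omega), F(\omega)) < 2^{-k}$, I define
\begin{equation*}
A_n^{k+1} := \bigl\{\omega : d(h_n, F(\omega)) < 2^{-(k+1)}\bigr\} \cap \bigl\{\omega : \|h_n - f_k(\omega)\| < 2^{-k+1}\bigr\},
\end{equation*}
which is measurable (the second factor uses only measurability of $f_k$ and continuity of the norm), and a short triangle-inequality check confirms $\bigcup_n A_n^{k+1} = \Omega$. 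Disjointifying to $B_n^{k+1}$ and setting $f_{k+1} := h_n$ on $B_n^{k+1}$ produces a measurable $f_{k+1}$ with $d(f_{k+1}(\omega), F(\omega)) < 2^{-(k+1)}$ and $\|f_{k+1}(\omega) - f_k(\omega)\| < 2^{-k+1}$.

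The geometric bound $\|f_{k+1} - f_k\| < 2^{-k+1}$ makes $\{f_k\}$ uniformly Cauchy in $\omega$, so it converges pointwise to a measurable function $f : \Omega \to H$; since $d(f(\omega), F(\omega)) = \lim_k d(f_k(\omega), F(\omega)) = 0$ and $F(\omega)$ is closed, $f(\omega) \in F(\omega)$ for every $\omega \in \Omega$. The main subtle point is really just a bookkeeping one: the lemma as stated does not explicitly list separability of $H$, but without it the dense subset $\{h_n\}$ does not exist and the inductive scheme breaks down; fortunately every envisaged use of this lemma in the paper is on a separable Hilbert space, so the gap is harmless in context.
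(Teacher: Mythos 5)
The paper does not prove this lemma at all: it is quoted verbatim as an external result, namely Theorem 4.2 of Wagner's survey \cite{W.D}, so there is no internal proof to compare against. Your argument is a correct, self-contained proof of the underlying result (the Kuratowski--Ryll-Nardzewski measurable selection theorem) by the standard inductive approximation: the identification $\{\omega: d(h,F(\omega))<r\}=\{\omega: F(\omega)\cap B(h,r)\neq\emptyset\}$ is exactly the right way to extract measurability from the weak-measurability hypothesis, the triangle-inequality check that $\bigcup_n A_n^{k+1}=\Omega$ goes through (pick $y\in F(\omega)$ with $\|f_k(\omega)-y\|<2^{-k}$ and $h_n$ with $\|h_n-y\|<2^{-(k+1)}$), and the uniform Cauchy bound $\|f_{k+1}-f_k\|<2^{-k+1}$ plus completeness of $H$ and closedness of $F(\omega)$ yields the selection. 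Your observation that the statement silently omits separability (and, one should add, completeness) of $H$ is also accurate --- these hypotheses appear explicitly in Wagner's formulation and are satisfied in the paper's only application, where $H=U$ is a separable Hilbert space --- so the proof is sound in context.
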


Next, we  prove the ``if'' part and the ``only if" part of Theorem \ref{the regular solution and the feedback operator}, respectively.
\subsection{Proof of the Necessity of Theorem \ref{the regular solution and the feedback operator}}\label{Sufficiency}
\indent\indent
This subsection is devoted to the proof of ``only if'' part of Theorem \ref{the regular solution and the feedback operator}.

\begin{proof}[{Proof of the Necessity of Theorem \ref{the regular solution and the feedback operator}}]
Let us assume that the equation \eqref{quantum Riccati equation} admits a weak solution $P(\cdot)\in C_\mathbb{A}([t_0,T];\mathcal{L}(L^2(\mathscr{C})))$. Then,
\begin{equation*} -K^{-1} L=-K^{-1}( B^*P+D^*PC)\in L^2([t_0,T];\mathcal{L}(L^2(\mathscr{C});U)).
\end{equation*}
For any $t_0\in[0,T]$, $\xi_1=\xi_2=\eta\in L^2(\mathscr{C}_{t_0})$, choose $z_1(\cdot)=z_2(\cdot)=\bar{x}(\cdot)$, $\mu_1=\mu_2=Bu$ and $\nu_1=\nu_2=Du$ in \eqref{to transposition solution QSDE-1}-\eqref{to transposition solution QSDE-2}. From \eqref{the definition of K and L} and Definition \ref{the def of weak solution to Riccati}, we obtain that
\begin{align*}
&\langle G\bar{x}(T),\bar{x}(T)\rangle+\int_{t_0}^{T}\langle M(t)\bar{x}(t),\bar{x}(t)\rangle dt\\
&=\langle P(t_0)\eta,\eta\rangle+\int_{t_0}^{T}\langle P(t)B(t)u(t),\bar{x}(t)\rangle dt+\int_{t_0}^{T}\langle P(t)\bar{x}(t), B(t)u(t)\rangle dt\\
&\hspace{4.5mm}+\int_{t_0}^{T}\langle P(t)D(t)u(t), D(t)u(t)\rangle dt+\int_{t_0}^{T}\langle K^{-1}(t)L(t)\bar{x}(t),L(t)\bar{x}(t)\rangle dt\\
&\hspace{4.5mm}+\int_{t_0}^{T}\langle P(t)C(t)\bar{x}(t),D(t)u(t)\rangle dt+\int_{t_0}^{T}\langle P(t)D(t)u(t),C(t)\bar{x}(t)\rangle dt.
\end{align*}
Under the case $\Theta(\cdot):=-K^{-1}(\cdot)L(\cdot) $, we deduce that
\begin{equation*}
  \begin{aligned}
    &2\mathcal{J}(t_0,\eta;\Theta(\cdot)\bar{x}(\cdot))\\
     &=\textrm{Re}\left\{\int_{t_0}^T\{\langle M(t)\bar{x}(t), \bar{x}(t)\rangle+\langle R(t)\Theta(t)\bar{x}(t),\Theta(t)\bar{x}(t)\rangle\} dt+\langle  G\bar{x}(T),\bar{x}(T)\rangle\right\}\\
    &=\textrm{Re}\left\{\langle P(t_0)\eta,\eta\rangle+\int_{t_0}^{T}\left\{\langle K^{-1}(t)L(t)\bar{x}(t),\bar{x}(t)\rangle +\langle P(t)D(t)\Theta(t)\bar{x}(t), D(t)\Theta(t)\bar{x}(t)\rangle\right\} dt\right.\\
    &\hspace{12mm}\left. +\int_{t_0}^{T}\Big\{\langle L(t)\bar{x}(t),\Theta(t)\bar{x}(t)\rangle+\langle \Theta(t)\bar{x}(t),L(t)\bar{x}(t)\rangle+\langle R(t)\Theta(t)\bar{x}(t),\Theta(t)\bar{x}(t)\rangle\Big\} dt\right\}\\
    &=\textrm{Re}\langle P(t_0)\eta,\eta\rangle.
  \end{aligned}
\end{equation*}

For any $u(\cdot)\in\mathcal{U}[t_0,T]$, let $x(\cdot)\equiv x(\cdot\ ;t_0,\eta,u(\cdot))$ be the corresponding state process to \eqref{FQSDE-LQ-inital}, we can obtain that
\begin{equation}\label{final of Theorem 1}
\begin{aligned}
   &2\mathcal{J}(t_0,\eta;u(\cdot))\\
    &={\rm{Re}}\left\{\int_{t_0}^T\langle M(t)x(t), x(t)\rangle dt+\int_{t_0}^T\langle R(t)u(t),u(t)\rangle dt+\langle  Gx(T),x(T)\rangle\right\}\\
&={\rm{Re}}\langle P({t_0})\eta,\eta\rangle+{\rm{Re}}\int_{t_0}^{T}\langle L(t)^*K(t)^{-1} L(t)x(t), x(t)\rangle dt\\
&\hspace{1.1em}+2{\rm{Re}}\int_{t_0}^{T}\langle L(t)x(t), u(t)\rangle dt +{\rm{Re}}\int_{t_0}^{T}\langle K(t)u(t),u(t)\rangle dt\\
&=2\mathcal{J}(t_0,\eta;\Theta(\cdot)\bar{x}(\cdot))+{\rm{Re}}\int_{t_0}^{T} \langle K(t)(u(t)+K^{-1}(t)L(t)x(t)),(u(t)+K^{-1}(t)L(t)x(t))\rangle dt.
\end{aligned}
\end{equation}
Therefore,
\begin{equation*}
  \mathcal{J}({t_0},\eta;\Theta(\cdot) \bar{x}(\cdot))\leq \mathcal{J}({t_0},\eta;u(\cdot)), \  u(\cdot)\in\mathcal{U}[{t_0},T],
\end{equation*}
which implies that $\bar{u}(\cdot) =\Theta(\cdot) \bar{x}(\cdot)$ is an optimal control, $\Theta(\cdot)=-K^{-1}(\cdot)L(\cdot)$ is an optimal feedback operator, and \eqref{the relation of value function and P} holds.
Further, by Definition \ref{the def of weak solution to Riccati}, $K(t)> 0$, a.e. $t\in [{t_0},T]$, then
we know that the optimal control is unique.
The proof is complete.
\end{proof}

\subsection{Proof of the Sufficiency of Theorem \ref{the regular solution and the feedback operator}}\label{Necessity}
\indent\indent
This subsection focuses on  proving the ``if'' part of Theorem \ref{the regular solution and the feedback operator}.
\begin{proof}[Proof of the Sufficiency of Theorem \ref{the regular solution and the feedback operator}]
Because the proof process is too long, it can be divided into a few steps.

\textbf{Step 1.}
In this step, we introduce the following four operators and study their strong approximation by the truncated systems.
Let $\Theta(\cdot)$ be an optimal feedback operator of \textbf{Problem (QSLQ)} on $[t_0,T]$, for any $t\in [t_0,T]$, we  define four operators $X(t)$, $\overline{Y}(t)$, $\widetilde{X}(t)$ and $\widetilde{Y}(t)$ on $L^2(\mathscr{C})$ as follows:
\begin{equation}\label{the definition of four operators}
X(t)\varsigma:=x(t;\varsigma),\quad \overline{Y}(t)\varsigma:=y(t;\varsigma),\quad \widetilde{X}(t)\varsigma:=\widetilde{x}(t;\varsigma),\quad \widetilde{Y}(t)\varsigma:=Y(t;\varsigma),\quad  \varsigma\in L^2(\mathscr{C}),
\end{equation}
where $(x(\cdot),y(\cdot),Y(\cdot))$ is the solution to \eqref{FBQSDE} and $\widetilde{x}(\cdot)$ is the solution to \eqref{QSDE-widetilde-x}. Next, we present some properties of the above four operators.

Consider the following  QSDEs:
\begin{equation}\label{FBQSDE-finite}
\left\{
\begin{aligned}
&dX_n(t)=\left\{A(t)+B(t)\Theta(t)\right\}X_n(t)dt+\left\{C(t)+D(t)\Theta(t)\right\}X_n(t)\Upsilon dW(t),\ &{\rm{in}}\ [t_0,T],\\
&d\overline{Y}_n(t)=-\left\{A(t)^* \overline{Y}_n(t)+C(t)^* \widetilde{Y}_n(t)-M(t)X_n(t)\right\}dt+\widetilde{Y}_n(t)\Upsilon dW(t),\ &{\rm{in}}\ [t_0, T],\\
&X_n(t_0)=\Gamma_n,\quad \overline{Y}_n(T)=-GX_n(T),&
\end{aligned}
\right.
\end{equation}
and
\begin{equation}\label{QSDE-widetilde-Xn}
 \left\{
 \begin{aligned}
   d\widetilde{X}_n(t)=&\left\{-A(t)-B(t)\Theta(t)+(C(t)+D(t)\Theta(t))^2\right\}^*\widetilde{X}_n(t)dt\\
  &\hspace{0.5mm}-\{C(t)+D(t)\Theta(t)\}^*\widetilde{X}_n(t)\Upsilon dW(t),\ {\rm{in}}\ [t_0,T],\\
   \widetilde{X}_n(t_0)=&\Gamma_n,
 \end{aligned}
 \right.
\end{equation}
where
$X_n(T)\in\mathcal{L}_2(L^2(\mathscr{C}_T))$ and $\Gamma_n$ is defined in Section \ref{pre}.
Both \eqref{FBQSDE-finite} and \eqref{QSDE-widetilde-Xn} can be regard as $\mathcal{L}_2(L^2(\mathscr{C})))$-valued equations.
By \cite[Lemma 2.2 and Theorem 3.2]{W.W.The relaxed transposition solution}, it is clear that  the equation \eqref{FBQSDE-finite} has
 a unique solution $(X_n(\cdot),\overline{Y}_n(\cdot),\widetilde{Y}_n(\cdot))\in C_\mathbb{A}([t_0,T];\mathcal{L}_2(L^2(\mathscr{C})))\times C_\mathbb{A}([t_0,T];$ $\mathcal{L}_2(L^2(\mathscr{C})))\times L^2_\mathbb{A}([t_0,T];\mathcal{L}_2(L^2(\mathscr{C})))$, and the equation  \eqref{QSDE-widetilde-Xn} admits a unique solution $\widetilde{X}_n(\cdot)\in C_\mathbb{A}([t_0,T];$ $\mathcal{L}_2(L^2(\mathscr{C})))$.
Here $\mathcal{L}_2(L^2(\mathscr{C}))$  denotes the Hilbet space of all Hilbert-Schmidt operators on  $L^2(\mathscr{C})$  with the inner product $\langle X_1, X_2\rangle_{\mathcal{L}_2}={\rm{tr}}(X_1^*X_2)$.

Let $\varsigma, \gamma\in L^2(\mathscr{C})$. It is evident that $x_n(t)=X_n(t)\Gamma_n\varsigma$, $y_n(t)=\overline{Y}_n(t)\Gamma_n\varsigma$, $Y_n(t)=\widetilde{Y}_n(t)\Gamma_n\varsigma$, and $\widetilde{x}_n(t)=\widetilde{X}_n(t)\Gamma_n\gamma$. Thus, $\left(X_n(\cdot)\Gamma_n\varsigma,\overline{Y}_n(\cdot)\Gamma_n\varsigma,\widetilde{Y}_n(\cdot)\Gamma_n\varsigma\right)$ is the solution of \eqref{FBQSDE-xn-yn}, and $\widetilde{X}_n(\cdot)\Gamma_n\gamma$ is the solution of \eqref{QSDE-widetilde-xn}. By \eqref{estimate of FBQSDE-xnynYn} and \eqref{estimate of FQSDE-xn}, we conclude that, for any $t\in [t_0,T]$,
\begin{equation*}
  \begin{array}{ll}
   \|X_n(t)\Gamma_n\varsigma\|_2\leq \mathcal{C}\|\varsigma\|_2,\quad
   &\|\overline{Y}_n(t)\Gamma_n\varsigma\|_2\leq \mathcal{C}\|\varsigma\|_2,\\
 \|\widetilde{X}_n(t)\Gamma_n\gamma\|_2\leq \mathcal{C}\|\gamma\|_2,\quad & \|\widetilde{Y}_n(\cdot)\Gamma_n\varsigma\|_{L^2_\mathbb{A}([t_0,T];L^2(\mathscr{C}))}\leq \mathcal{C}\|\varsigma\|_2,
  \end{array}
\end{equation*}
where the constant $\mathcal{C}$ is independent of $n$. This implies that
\begin{equation}\label{uniformly bounded}
\begin{array}{ll}
\|X_n(t)\Gamma_n\|_{\mathcal{L}(L^2(\mathscr{C});L^2(\mathscr{C}_t))}\leq \mathcal{C},\quad \|\overline{Y}_n(t)\Gamma_n\|_{\mathcal{L}(L^2(\mathscr{C});L^2(\mathscr{C}_t))}\leq \mathcal{C},\\
\|X_n(t)\Gamma_n\|_{\mathcal{L}(L^2(\mathscr{C});L^2(\mathscr{C}_t))}\leq \mathcal{C},\quad \|\widetilde{Y}_n(t)\Gamma_n\|_{\mathcal{L}(L^2(\mathscr{C});L^2_\mathbb{A}([t_0,T];L^2(\mathscr{C})))}\leq \mathcal{C}.
\end{array}
\end{equation}
By Lemma \ref{approximate of FBQSDEs}, we obtain that
\begin{equation}\label{final convergence result}
  \left\{
  \begin{array}{ll}
(s)-\lim\limits_{n\to\infty}X_{n}(t)\Gamma_n\varsigma=X(t)\varsigma,&\quad \textrm{in}\ L^2(\mathscr{C}_t),\\
(s)-\lim\limits_{n\to\infty}\overline{Y}_{n}(t)\Gamma_n\varsigma=\overline{Y}(t)\varsigma, &\quad\textrm{in}\  L^2(\mathscr{C}_t),\\
(s)-\lim\limits_{n\to\infty}\widetilde{X}_{n}(t)\Gamma_n\varsigma=\widetilde{X}(t)\varsigma,&\quad\textrm{in}\ L^2(\mathscr{C}_t),\\
(s)-\lim\limits_{n\to\infty}\widetilde{Y}_{n}(t)\Gamma_n\varsigma\hspace{1mm}=\widetilde{Y}(t)\varsigma,& \quad \textrm{in}\  L^2_\mathbb{A}([t_0,T];L^2(\mathscr{C})).\\
  \end{array}
  \right.
\end{equation}

Applying Fermion It\^{o}'s formula \cite[Theorem 5.2]{A.H-2} to $\langle x(t), \widetilde{x}(t)\rangle$, we get that
\begin{align}
&\langle x(t), \widetilde{x}(t)\rangle-\langle\varsigma, \gamma\rangle\nonumber\\
&=\int_{t_0}^t\langle (A(s)+B(s)\Theta(s))x(s), \widetilde{x}(s)\rangle ds\label{Ito-finite}\\
&\hspace{5mm}-\int_{t_0}^{t}\langle(C(s)+D(s)\Theta(s))x(s),(C(s)+D(s)\Theta(s))^*\widetilde{x}(s)\rangle dt\nonumber\\
&\hspace{5mm}+\int_{t_0}^t\langle x(s), \left\{-A(s)-B(s)\Theta(s)+(C(s)+D(s)\Theta(s))^2\right\}^*\widetilde{x}(s)\rangle dt=0\nonumber.
\end{align}
It follows that
\begin{equation*}
\langle X(t)\varsigma, \widetilde{X}(t)\gamma\rangle=\langle x(t), \widetilde{x}(t)\rangle=\langle\varsigma, \gamma\rangle,
\end{equation*}
which implies that $\widetilde{X}(t)^*X(t)=I$. Thus, for any $t\in [t_0,T]$ and $\gamma\in L^2(\mathscr{C}_{t_0})$, $\widetilde{X}(t)^*X(t)\gamma=\gamma$. Further, by \eqref{the definition of four operators}, there exists a constant $\mathcal{C}>0$ such that
$$\|\widetilde{X}(t)\|_{\mathcal{L}(L^2(\mathscr{C}))}<\mathcal{C},\quad t\in [t_0,T].$$
Hence,
\begin{equation*}
 \|\gamma\|_2=\|\widetilde{X}(t)^*X(t)\gamma\|_2\leq\|\widetilde{X}(t)^*\|_{\mathcal{L}(L^2(\mathscr{C}))}\|X(t)\gamma\|_2 \leq \mathcal{C}\|X(t)\gamma\|_2,\quad t\in [t_0,T].
\end{equation*}
which shows that $X(t)$
is bounded below.
By Lemma \ref{dense}, for any $t\in[t_0,T]$, the range $X(t)$ is dense in $L^2(\mathscr{C}_t)$. 
Therefore,  we conclude that for any $t\in[t_0,T]$, the operator $X(t)$ is invertible, and
\begin{equation}\label{relation of X and widetildeX}
\widetilde{X}(t)^*=X(t)^{-1}.
\end{equation}

\textbf{Step 2.}
In this step, we present an explicit formula of $P(\cdot)$ and give the estimate of the norm of $P(\cdot)$.
By means of \eqref{feedback and By-DY} and \eqref{the definition of four operators}, we find that
\begin{equation}\label{optimal R0X-BY-DZ}
R(t)\Theta(t) X(t)-B(t)^*\overline{Y}(t)-D(t)^*\widetilde{Y}(t)=0, \ {\rm a.e.}\ t\in[t_0,T].
\end{equation}
Put
\begin{equation}\label{the definition of P and Pi}
P(\cdot):=-\overline{Y}(\cdot)\widetilde{X}(\cdot)^*,\quad \Pi(\cdot):=-\widetilde{Y}(\cdot)\widetilde{X}(\cdot)^*.
\end{equation}
It follows from \eqref{relation of X and widetildeX} and \eqref{optimal R0X-BY-DZ} that
\begin{equation}\label{RTheta+B^*P+D^*Pi}
R(t)\Theta(t)+B(t)^*P(t)+D(t)^*\Pi(t)=0,\  \textrm{a.e.}\   t\in [t_0,T].
\end{equation}

Let $s\in[t_0,T]$, $\eta\in L^2(\mathscr{C}_s)$, we consider the following forward-backward QSDE:
\begin{equation}\label{FBQSDE-s}
\left\{
\begin{array}{lr}
dx^s(t)=\left\{A(t)+B(t)\Theta(t)\right\}x^s(t)dt+\left\{C(t)+D(t)\Theta(t)\right\}x^s(t)dW(t),\ &{\rm{in}}\ [s,T],\\
dy^s(t)=-\left\{A(t)^*y^s(t)+C(t)^*Y^s(t)-M(t)x^s(t)\right\}dt+Y^s(t)dW(t),\ &{\rm{in}}\ [s, T],\\
x^s(s)=\eta,\quad y^s(T)=-Gx^s(T).&
\end{array}
\right.
\end{equation}
It follows from Lemma \ref{the result on FBQSDE} and \eqref{feedback and By-DY} that the equation \eqref{FBQSDE-s} admits a unique solution $(x^s(\cdot),y^s(\cdot),Y^s(\cdot))\in$ $C_\mathbb{A}([s,T];L^2(\mathscr{C}))\times C_\mathbb{A}([s,T];L^2(\mathscr{C}))\times L^2_\mathbb{A}([s,T];L^2(\mathscr{C}))$ such that
\begin{equation}\label{estimate of RxByDY}
  R(t)\Theta(t) x^s(t)-B^*(t)y^s(t)-D(t)^*Y^s(t)=0,\ \textrm{a.e.}\ t\in[s,T].
\end{equation}

Next, for each  $t\in [s,T]$,  define two families of operators $X_t^s$ and $\overline{Y}^s_t$ on $L^2(\mathscr{C}_s)$ as follows:
\begin{equation}\label{definition of X_t^s and overline(Y)_t^s}
  X_t^s\eta:=x^s(t;\eta),\quad \overline{Y}_t^s\eta:=y^s(t;\eta).
\end{equation}
Then, from  Lemma \ref{the property of the solution of FQSDE} and \ref{the property of the solution of BQSDE}, one has that
\begin{equation}\label{the estimate XY with s-t}
\left\|X_t^s\eta\right\|_2=\|x^s(t;\eta)\|_2\leq \mathcal{C}\|\eta\|_2,\quad \left\|\overline{Y}_t^s\eta\right\|_2=\|y^s(t;\eta)\|_2\leq \mathcal{C}\|\eta\|_2.
\end{equation}
This implies that $X_t^s,\overline{Y}_t^s\in \mathcal{L}(L^2(\mathscr{C}_s);L^2(\mathscr{C}_t))$ for any $t\in[s,T]$.

From \eqref{definition of X_t^s and overline(Y)_t^s}, it is clear that, for any $\zeta\in L^2(\mathscr{C})$,
\begin{equation*}
X_t^sX(s)\zeta=x^s(t;X(s)\zeta)=x(t,\zeta),\quad
\overline{Y}^s_tX(s)\zeta=y^s(t;X(s)\zeta)=\overline{Y}(t)\zeta.
\end{equation*}
Hence,
\begin{equation}\label{overline(Y)^s_s=overline(Y)(s)X(s)}
\overline{Y}^s_s=\overline{Y}(s)X(s)^{-1}=\overline{Y}(s)\widetilde{X}(s)^*,\  \textrm{for all}\ s\in[t_0,T].
\end{equation}

Let $\eta_1,\eta_2\in L^2(\mathscr{C}_s)$. Then $X^s_t\eta_1=x^s(t;\eta_1)$ and $\overline{Y}^s_t\eta_2=y^s(t;\eta_2)$.
Applying Fermion It\^{o}'s formula \cite[Theorem 5.2]{A.H-2} to $\langle y^s(\cdot;\eta_2), x^s(\cdot;\eta_1)\rangle$ and noting \eqref{FBQSDE-s}-\eqref{estimate of RxByDY}, one has
\begin{equation*}
-\langle GX^s_T\eta_2, X^s_T\eta_1 \rangle =\left\langle \overline{Y}^s_s\eta_2, \eta_1\right\rangle+\int_{s}^{T}\langle M(t)X^s_t\eta_2, X^s_t\eta_1\rangle +\langle R(t)\Theta(t)X^s_t\eta_2,\Theta(t)X^s_t\eta_1 \rangle dt.
\end{equation*}
Therefore,
\begin{equation}\label{to prove self-adjoint-1}
\left\langle\overline{Y}^s_s\eta_2, \eta_1\right\rangle=-\left\langle GX^s_T\eta_2, X^s_T\eta_1\right\rangle-\int_{s}^{T}\left\langle M(t)X^s_t\eta_2, X^s_t\eta_1\right\rangle +\left\langle R(t)\Theta(t)X^s_t\eta_2, \Theta(t)X^s_t\eta_1 \right\rangle dt.
\end{equation}
Analogous to the reasoning presented in \eqref{to prove self-adjoint-1}, we apply Fermion It\^{o}'s formula \cite[Theorem 5.2]{A.H-2} to \(\langle x^s(\cdot;\eta_2), y^s(\cdot;\eta_1)\rangle\) and utilize the self-adjointness of \(G\), \(M(\cdot)\), and \(R(\cdot)\). This leads us to conclude that
\begin{equation}\label{to prove self-adjoint-2}
\left\langle\eta_2, \overline{Y}^s_s\eta_1\right\rangle=-\left\langle GX^s_T\eta_2, X^s_T\eta_1\right\rangle-\int_{s}^{T}\left\langle M(t)X^s_t\eta_2, X^s_t\eta_1\right\rangle +\left\langle R(t)\Theta(t)X^s_t\eta_2, \Theta(t)X^s_t\eta_1 \right\rangle dt.
\end{equation}
From \eqref{to prove self-adjoint-1} and \eqref{to prove self-adjoint-2}, we deduce  that $\overline{Y}_s^s=\overline{Y}(s)\widetilde{X}(s)^*$ is self-adjoint for any $s\in [t_0,T]$. Furthermore, \eqref{the estimate XY with s-t} implies that for any $s\in[t_0,T]$ and $\eta\in L^2(\mathscr{C}_s)$,
\begin{equation}
\|\overline{Y}_s^s\eta\|_2\leq \mathcal{C}\|\eta\|_2,
\end{equation}
where $\mathcal{C}$ is independent of $s\in[t_0,T)$. Therefore, we have that
\begin{equation}\label{the proof of the bounded of YX}
\left\|\overline{Y}(s)\widetilde{X}(s)^*\right\|_{\mathcal{L}(L^2(\mathscr{C}_s))}\leq \mathcal{C}.
\end{equation}
From \eqref{the definition of P and Pi}, \eqref{overline(Y)^s_s=overline(Y)(s)X(s)} and \eqref{the proof of the bounded of YX}, we conclude that $P(\cdot)$ is self-adjoint, and there exist a positive constant $\mathcal{C}$ such that
\begin{equation}\label{show P id bounded}
\|P(s)\|_{\mathcal{L}(L^2(\mathscr{C}_s))}\leq \mathcal{C},\quad s\in[t_0,T].
\end{equation}

\textbf{Step 3.} In this step, we show that $P(\cdot)$ is a solution to an operator-valued differential equation.
For any $t\in [t_0,T]$, define
\begin{equation}\label{definition of finite Pn and Pin}
P_n(t):=-\overline{Y}_{n}(t)\widetilde{X}_{n}(t)^{*},\quad \Pi_n(t):=-\widetilde{Y}_n(t)\widetilde{X}_{n}(t)^{*}.
\end{equation}
By \eqref{final convergence result},
we can obtain that for any $t\in [t_0,T]$ and $\eta, \xi\in L^2(\mathscr{C})$,
\begin{equation}\label{the weak convergence of P}
\begin{aligned}
\lim_{n\to\infty}\left\langle X_{n}(t)\widetilde{X}_{n}(t)^{*}\Gamma_n\eta,\xi\right\rangle=\left\langle X(t)\widetilde{X}(t)^{*}\eta,\xi\right\rangle,\\
\lim_{n\to\infty}\left\langle \overline{Y}_{n}(t)\widetilde{X}_{n}(t)^{*}\Gamma_n\eta,\xi\right\rangle=\left\langle \overline{Y}(t)\widetilde{X}(t)^{*}\eta,\xi\right\rangle,\\
\lim_{n\to\infty}\left\langle \widetilde{Y}_{n}(t)\widetilde{X}_{n}(t)^{*}\Gamma_n\eta,\xi\right\rangle=\left\langle \widetilde{Y}(t)\widetilde{X}(t)^{*}\eta,\xi\right\rangle.
\end{aligned}
\end{equation}
By Fermion It\^{o}'s formula \cite[Theorem 5.2]{A.H-2}, the properties of $\Upsilon$ and \eqref{definition of finite Pn and Pin}, we obtain that
\begin{equation*}
\begin{aligned}
dP_n(t)&=-\left(d\overline{Y}_{n}(t)\right)\widetilde{X}_{n}(t)^{*}-\overline{Y}_{n}(t)\left(d\widetilde{X}_{n}(t)^{*}\right)-d\left(\overline{Y}_{n}(t)\right)d\left(\widetilde{X}_{n}(t)^{*}\right)\\
&=\Big\{A(t)^*\overline{Y}_{n}(t)+C(t)^*\widetilde{Y}_n(t)-M(t)X_n(t)\Big\}\widetilde{X}_{n}(t)^{*}dt-\widetilde{Y}_n(t)\Upsilon dW(t)\widetilde{X}_{n}(t)^{*}\\
 &\hspace{1.1em} -\overline{Y}_{n}(t)\widetilde{X}_{n}(t)^{*}\left\{A(t)+B(t)\Theta(t)-(C(t)+D(t)\Theta(t))^2\right\}dt\\
 &\hspace{1.1em}+\overline{Y}_{n}(t) dW\Upsilon\widetilde{X}_{n}(t)^{*}\{C(t)+D(t)\Theta(t)\}+\widetilde{Y}_n(t)\Upsilon\Upsilon\widetilde{X}_{n}(t)^{*}\{C(t)+D(t)\Theta(t)\} dt\\
&=-\left\{A(t)^* P_n(t)+C(t)^*\Pi_n(t)+M(t)X_n(t)\widetilde{X}_{n}(t)^{*}+P_n(t)B(t)\Theta(t)+P_n(t)A(t)\right.\\
  &\hspace{2.8em}-P_n(t)(C(t)+D(t)\Theta(t))^2+\Pi_n(t)(C(t)+D(t)\Theta(t))\Big\}dt\\
  &\hspace{1.1em}+ \{\Pi_n(t)-P_n(t)(C(t)+D(t)\Theta(t))\}dW(t).
  \end{aligned}
\end{equation*}
Let $\Xi_n(\cdot):=\Pi_n(\cdot)-P_n(\cdot)\{C(\cdot)+D(\cdot)\Theta(\cdot)\}$. Then 
\begin{equation}\label{the expression of P-n}
\left\{
\begin{aligned}
 & dP_n(t)
=-\Big\{A(t)^* P_n(t)+P_n(t)A(t)+C(t)^* \Xi_n(t)+\Xi_n(t)C(t)+C(t)^*P_n(t)C(t)\\
&\hspace{20mm}+\{P_n(t)B(t)+\Xi_n(t)D(t)+C(t)^*P_n(t)D(t)\}\Theta(t)\\
&\hspace{20mm}+M(t)X_n(t)\widetilde{X}_{n}(t)^{*}\Big\}dt+\Xi_n(t)dW(t),\quad \textrm{ in }[t_0,T],\\
  %
  &P_n(T)=G\Gamma_n.
  \end{aligned}
  \right.
\end{equation}
The equation \eqref{the expression of P-n} is regard as $\mathcal{L}_2(L^2(\mathscr{C}))$-valued BQSDE.
By Proposition \ref{The differentiability of P}, we obtain that
\begin{equation*}
\begin{aligned}
 &d\left\langle P_n(t)z_1(t),  z_2(t)\right\rangle \\
  &=-\langle\{P_n(t)B(t)+\Xi_n(t)D(t)+C(t)^*P_n(t)^*D(t)\}\Theta(t)z_1(t),z_2(t)\rangle dt \\
  &\hspace{4.5mm}-\langle M(t)X_n(t)\widetilde{X}(t)^*z_1(t),z_2(t)\rangle dt +\langle P_n(t)\mu_1(t),z_2(t) \rangle dt\\
  &\hspace{4.5mm}+\langle P_n(t)z_1(t),\mu_2(t) \rangle dt+\langle P_n(t)\nu_1(t),C(t)z_2(t)+\nu_2(t) \rangle dt\\
  &\hspace{4.5mm}+\langle P_n(t)C(t)z_1(t),\nu_2(t) \rangle dt+\langle \Xi_n(t)z_1(t),\nu_2(t) \rangle dt+\langle \Xi_n(t)\nu_1(t), z_2(t) \rangle dt.
\end{aligned}
\end{equation*}
Therefore, for any $t\in [t_0,T]$,
\begin{align*}
  & \langle P_n(T)z_1(T),  z_2(T)\rangle+\int_{t}^{T}\langle M(s)X_n(s)\widetilde{X}_n(s)^*z_1(s),z_2(s)\rangle ds\\
  &=\langle P_n(t)z_1(t),  z_2(t)\rangle +\int_{t}^{T}\langle P_n(s)\mu_1(s),z_2(s) \rangle ds+\int_{t}^{T}\langle P_n(s)z_1(s),\mu_2(s) \rangle ds \\
  &\hspace{4.5mm}-\int_{t}^{T}\langle \{P_n(s)B(s)+\Xi_n(s)D(s)+C(s)^*P_n(s)D(s)\}\Theta(s)z_1(s),z_2(s)\rangle ds \\
  &\hspace{4.5mm}+\int_{t}^{T}\langle P_n(s)C(s)z_1(s),\nu_2(t) \rangle ds +\int_{t}^{T}\langle P_n(s)\nu_1(s),C(s)z_2(s)+\nu_2(s) \rangle ds\\
  &\hspace{4.5mm}+\int_{t}^{T}\langle \Xi_n(s)z_1(s),\nu_2(s) \rangle ds+\int_{t}^{T}\langle \Xi_n(s)\nu_1(s), z_2(s) \rangle  ds.
\end{align*}

Furthermore, by \eqref{the weak convergence of P}, we can deduce that
\begin{equation}\label{convergence of P(T)}
\begin{aligned}
&\lim_{n\to\infty}\{\langle P_n(T)z_1(T),  z_2(T)\rangle-\langle P_n(t )z_1(t),  z_2(t )\rangle\}\\
&\hspace{0.9cm}=\langle P(T)z_1(T),  z_2(T)\rangle-\langle P(t)z_1(t),  z_2(t)\rangle.
\end{aligned}
\end{equation}
It follows from  \eqref{final convergence result} and \eqref{definition of finite Pn and Pin} that we conclude that for any $t\in[t_0,T]$,
\begin{equation*}
\begin{aligned}
   & \left|\left\langle \left\{M(t)X_n(t)\widetilde{X}_n(t)^*+(P_n(t)B(t)+\Xi_n(t)D(t)+C(t)^*P_n(t)D(t))\Theta(t)\right\}z_1(t),z_2(t)\right\rangle\right| \\
   & \leq \Big\{\|M(t)\|_{\mathcal{L}(L^2(\mathscr{C}))}\|X_n(t)\|_{\mathcal{L}(L^2(\mathscr{C}))}\|\widetilde{X}_n(t)^*\|_{\mathcal{L}(L^2(\mathscr{C}))}\\
   &\hspace{7mm} +\|C(t)^*\|_{\mathcal{L}(L^2(\mathscr{C}))}\|\overline{Y}_n(t)\|_{\mathcal{L}(L^2(\mathscr{C}))}\|\widetilde{X}_n(t)^*\|_{\mathcal{L}(L^2(\mathscr{C}))}\|D(t)\|_{\mathcal{L}(U;L^2(\mathscr{C}))}\|\Theta(t)\|_{\mathcal{L}(L^2(\mathscr{C}))}\\
  &\hspace{7mm}+\{\|\widetilde{Y}_n(t)\|_{\mathcal{L}(L^2(\mathscr{C}))}\|\widetilde{X}_n(t)^*\|_{\mathcal{L}(L^2(\mathscr{C}))}  +\|\overline{Y}_n(t)\|_{\mathcal{L}(L^2(\mathscr{C}))}\|\widetilde{X}_n(t)^*\|_{\mathcal{L}(L^2(\mathscr{C}))}(\|C(t)\|_{\mathcal{L}(L^2(\mathscr{C}))}\\
    &\hspace{13mm} +\|D(t)\|_{\mathcal{L}(U;L^2(\mathscr{C}))}\|\Theta(t)\|_{\mathcal{L}(L^2(\mathscr{C});U)})\}\|D(t)\|_{\mathcal{L}(U;L^2(\mathscr{C}))}\|\Theta(t)\|_{\mathcal{L}(L^2(\mathscr{C});U)}\\
 &\hspace{7mm}  +\|\overline{Y}_n(t)\|_{\mathcal{L}(L^2(\mathscr{C}))}\|\widetilde{X}_n(t)^*\|_{\mathcal{L}(L^2(\mathscr{C}))}\|B(t)\|_{\mathcal{L}(U;L^2(\mathscr{C}))}\|\Theta(t)\|_{\mathcal{L}(L^2(\mathscr{C});U)}\Big\}
  \|z_1(t)\|_2\|z_2(t)\|_2\\
   & \leq \mathcal{C}\Big\{(1+\|C(t)\|_{\mathcal{L}(L^2(\mathscr{C}))}+\|D(t)\|_{\mathcal{L}(U;L^2(\mathscr{C}))}\|\Theta(t)\|_{\mathcal{L}(L^2(\mathscr{C});U)})\|D(t)\|_{\mathcal{L}(U;L^2(\mathscr{C}))}
   \|\Theta(t)\|_{\mathcal{L}(L^2(\mathscr{C});U)}\\
   &\hspace{9mm} + \|C(t)^*\|_{\mathcal{L}(L^2(\mathscr{C}))}\|D(t)\|_{\mathcal{L}(U;L^2(\mathscr{C}))}\|\Theta(t)\|_{\mathcal{L}(L^2(\mathscr{C});U)} \\
 &\hspace{9mm}  +\|M(t)\|_{\mathcal{L}(L^2(\mathscr{C}))}+ \|B(t)\|_{\mathcal{L}(U;L^2(\mathscr{C}))}\|\Theta(t)\|_{\mathcal{L}(L^2(\mathscr{C});U)}\Big\}
  \|z_1(t)\|_2\|z_2(t)\|_2.
\end{aligned}
\end{equation*}
Similarly,
\begin{equation*}
  \begin{aligned}
&|\langle P_n(t)\mu_1(t),z_2(t) \rangle+\langle P_n(t)z_1(t),\mu_2(t) \rangle\\
&\hspace{4.5mm} +\langle P_n(t)\nu_1(t),C(t)z_2(t)+\nu_2(t) \rangle+\langle P_n(t)C(t)z_1(t),\nu_2(t) \rangle|\\
& \leq \|\overline{Y}_n(t)\|_{\mathcal{L}(L^2(\mathscr{C}))}\|\widetilde{X}_n(t)^*\|_{\mathcal{L}(L^2(\mathscr{C}))}\|\mu_1(t)\|_2\|z_2(t)\|_2\\
&\hspace{4.5mm}+\|\overline{Y}_n(t)\|_{\mathcal{L}(L^2(\mathscr{C}))}\|\widetilde{X}_n(t)^*\|_{\mathcal{L}(L^2(\mathscr{C}))}\|z_1(t)\|_2\|\mu_2(t)\|_2\\
&\hspace{4.5mm}+\|\overline{Y}_n(t)\|_{\mathcal{L}(L^2(\mathscr{C}))}\|\widetilde{X}_n(t)^*\|_{\mathcal{L}(L^2(\mathscr{C}))}\|\nu_1(t)\|_2\{\|C(t)\|_{\mathcal{L}(L^2(\mathscr{C}))}\|z_2(t)\|_2+\|\nu_2(t) \|_2\}\\
&\hspace{4.5mm}+\|\overline{Y}_n(t)\|_{\mathcal{L}(L^2(\mathscr{C}))}\|\widetilde{X}_n(t)^*\|_{\mathcal{L}(L^2(\mathscr{C}))}\|C(t)\|_{\mathcal{L}(L^2(\mathscr{C}))}\|z_1(t)\|_2\|\nu_2(t)\|_2\\
& \leq \mathcal{C}\Big\{\|\mu_1(t)\|_2\|z_2(t)\|_2
+\|\nu_1(t)\|_2\{\|C(t)\|_{\mathcal{L}(L^2(\mathscr{C}))}\|z_2(t)\|_2+\|\nu_2(t) \|_2\}\\
 &\hspace{9mm}  +\|z_1(t)\|_2\|\mu_2(t)\|_2
+\|C(t)\|_{\mathcal{L}(L^2(\mathscr{C}))}\|z_1(t)\|_2\|\nu_2(t)\|_2\Big\},
\end{aligned}
\end{equation*}
and
\begin{align*}
  & |\langle \Xi_n(t)z_1(t),\nu_2(t) \rangle +\langle \Xi_n(t)\nu_1(t), z_2(t) \rangle|\\
& \leq \|\Xi_n(t)\|_{\mathcal{L}(L^2(\mathscr{C}))}\|z_1(t)\|_2\|\nu_2(t)\|_2+\|\Xi_n(t)\|_{\mathcal{L}(L^2(\mathscr{C}))}\|\nu_1(t)\|_2\|z_2(t)\|_2\\
&\leq \|\widetilde{Y}_n(t)\|_{\mathcal{L}(L^2(\mathscr{C}))}\|\widetilde{X}_n(t)^*\|_{\mathcal{L}(L^2(\mathscr{C}))}\|z_1(t)\|_2\|\nu_2(t)\|_2\\
&\hspace{4.5mm}+\|\overline{Y}_n(t)\|_{\mathcal{L}(L^2(\mathscr{C}))}\|\widetilde{X}_n(t)^*\|_{\mathcal{L}(L^2(\mathscr{C}))}\|\nu_1(t)\|_2\|z_2(t)\|_2\\
& \leq \mathcal{C}\Big\{\|z_1(t)\|_2\|\nu_2(t)\|_2+\|\nu_1(t)\|_2\|z_2(t)\|_2\Big\}.
\end{align*}
These, together with  \eqref{relation of X and widetildeX}, \eqref{the weak convergence of P} and Lebesgue's Dominated Convergence Theorem, imply that
 \begin{align}
   &\lim_{n\to\infty}\int_{t }^{T}\langle M( s)X_n(s)\widetilde{X}_n(s)^*z_1(s),z_2(s)\rangle ds\nonumber \\
  &\hspace{0.9cm} =\int_{t }^{T}\lim_{n\to\infty}\langle M(s)X_n(s)\widetilde{X}_n(s)^*z_1(s),z_2(s)\rangle ds\label{convergence of M}\\
  &\hspace{0.9cm} =\int_{t }^{T}\langle M(s)z_1(s),z_2(s)\rangle ds.\nonumber
\end{align}
Similar to \eqref{convergence of M}, we have that
\begin{equation}\label{convergence of P}
  \begin{aligned}
\lim_{n\to\infty}&\left\{\int_{t }^{T}\left\langle\{ (P_n(s)B(s)+C(s)^*P_n(s)^*D(s))\Theta(s)\}z_1(s),z_2(s)\right\rangle dt\right.\\
&\hspace{2.5mm}+\int_{t }^{T}\langle P_n(s)C(s)z_1(s),\nu_2(s)  \rangle ds +\int_{t }^{T}\langle P_n(s)\nu_1(t),C(s)z_2(s)+\nu_2(s) \rangle ds\\
&\hspace{2.5mm}\left.+\int_{t }^{T}\langle P_n(s)\mu_1(s),z_2(s) \rangle ds +\int_{t }^{T}\langle P_n(s)z_1(s),\mu_2(s) \rangle ds\right\}\\
=&\int_{t }^{T}\left\langle\{P(s)B(s)+C(s)^*P(s)^*D(s)\}\Theta(t)z_1(s),z_2(s)\right\rangle ds+\int_{t_0}^{T}\langle P(s)C(s)z_1(s),\nu_2(s)  \rangle ds\\
&\hspace{0.5mm}+\int_{t }^{T}\left\{\langle P(s)\mu_1(s),z_2(s) \rangle+\langle P(s)z_1(s),\mu_2(s) \rangle +\langle P(s)\nu_1(t),C(s)z_2(s)+\nu_2(s) \rangle \right\}ds,
\end{aligned}
\end{equation}
and
\begin{equation}\label{convergence of Xi}
\begin{aligned}
  &\lim_{n\to\infty}\int_{t}^{T}\left\{\langle \Xi_n(s)z_1(s),\nu_2(s) \rangle +\langle \Xi_n(s)\nu_1(s), z_2(s) \rangle+\langle\Xi_n(s)D(s)z_1(s),z_2(s)\rangle\right\}ds  \\
& \hspace{0.9cm}=\int_{t}^{T}\langle \{\Pi(s)-P(s )(C(s)+D(s)\Theta(s))\}z_1(s),\nu_2(s) \rangle ds\\
  &\hspace{13.5mm}+\int_{t}^{T}\langle \{\Pi(s)-P(s)(C(s)+D(s)\Theta(t))\}\nu_1(s), z_2(s) \rangle ds\\
   &\hspace{13.5mm} +\int_{t}^{T}\langle\{\Pi(s)-P(s)(C(s)+D(s)\Theta(t))\}D(s)z_1(s),z_2(s)\rangle ds.
\end{aligned}
\end{equation}
By \eqref{the definition of K and L} and \eqref{RTheta+B^*P+D^*Pi}, we can show that
\begin{equation}\label{the equ of Pi-P(C+DO)}
  \Pi(t)-P(t)(C(t)+D(t)\Theta(t))=0,\ {\rm a.e.}\ t\in[t_0,T].
\end{equation}
It follows from \eqref{convergence of P(T)}-\eqref{the equ of Pi-P(C+DO)} that we have that for any $t\in[t_0,T]$,
  \begin{align}
  & \langle P(T)z_1(T),  z_2(T)\rangle+\int_{t }^{T}\left\langle M(s)z_1(s),z_2(s)\right\rangle ds\nonumber \\
  &\hspace{4.5mm}+\int_{t }^{T}\left\langle P(s)B(s)\Theta(s)z_1(s),z_2(s)\right\rangle ds+\int_{t}^{T}\left\langle C(s)^*P(s)D(t)\Theta(s)z_1(s),z_2(s)\right\rangle ds\nonumber \\
  &=\langle P(t)z_1(t),  z_2(t)\rangle+\int_{t}^{T}\langle P(s)\mu_1(s),z_2(s) \rangle ds+\int_{t}^{T}\langle P(s)z_1(s),\mu_2(s) \rangle ds\label{the ito of P}\\
  &\hspace{4.5mm} +\int_{t }^{T}\langle P(s)C(s)z_1(s),\nu_2(s)\rangle  ds+\int_{t}^{T}\langle P(s)\nu_1(s),C(s)z_2(s)+\nu_2(s) \rangle ds.\nonumber
\end{align}
Then $P(\cdot)$ satisfies the following
\begin{equation}\label{the final of P-Step 3}
\left\{
\begin{aligned}
&dP=-\{A^*P+PA+C^*PC+(PB+C^*PD)\Theta+M\}dt,\ {\rm  in} \ [t_0,T],\\
&P(T)=G.
\end{aligned}
\right.
\end{equation}

\textbf{Step 4.}
In this step, we shall prove that $P(\cdot)$ is the solution to the quantum Riccati equation \eqref{quantum Riccati equation} in the sense of Definition \ref{the def of weak solution to Riccati}.
From \eqref{RTheta+B^*P+D^*Pi}, we can see that
\begin{equation}\label{the step 5}
0=B^*P +R\Theta+D^*P(C+D\Theta)=B^*P +D^*PC+K\Theta, \ \textrm{ a.e.}\ t\in[t_0,T].
\end{equation}
Then
\begin{equation}\label{the relation K}
PB +C^* P D=-\Theta^* K^*.
\end{equation}
Since $K(\cdot)^*=K(\cdot)$, we can infer that
\begin{align}
  & \langle P(T)z_1(T),  z_2(T)\rangle+\int_{t}^T\langle M(s)z_1(s), z_2(s)\rangle ds-\int_t^T \langle\Theta(s)^* K(s)^*\Theta(s)z_1(s), z_2(s)\rangle ds\nonumber\\
   &=\langle P(t)z_1(t),  z_2(t)\rangle +\int_{t}^{T}\langle P(s)\mu_1(s),z_2(s)\rangle ds+\int_{t}^{T}\langle P(s)z_1(s),\mu_2(s)\rangle ds \label{first variation of step 4}\\
   &\hspace{5mm} +\int_{t}^{T}\langle P(s)C(s)z_1(s),\nu_2(s)\rangle ds+\int_{t}^{T}\langle P(s)\nu_1(s), C(s)z_2(s)+\nu_2(s)\rangle ds.\nonumber
\end{align}

For any $t\in [t_0,T]$, $\eta \in L^2(\mathscr{C}_t)$, $u(\cdot)\in L^2([t_0,T];U)$, let $\xi_1=\xi_2=\eta$, $\mu_1=\mu_2=Bu$, $\nu_1=\nu_2=Du$ and $z_1=z_2=x$ in \eqref{to transposition solution QSDE-1}-\eqref{to transposition solution QSDE-2}. Similar to the proof of \eqref{final of Theorem 1}, together with \eqref{first variation of step 4}, noting with \eqref{the definition of K and L} and \eqref{the relation K}, we show that
\begin{equation}\label{cost function of step 4}
 \begin{aligned}
      \mathcal{J}(t,\eta;u(\cdot))
  & =\frac{1}{2}\textrm{Re}\left\{\langle Gx(T), x(T)\rangle+\int_t^T\langle M(s)x(s), x(s)\rangle ds+\int_t^T\langle R(s)u(s),u(s)\rangle_U ds\right\}\\
    & =\frac{1}{2}\textrm{Re}\left\{\langle P(t)\eta, \eta\rangle+\int_t^T\langle K(s)(u(s)-\Theta(s)x(s)),u(s)-\Theta(s)x(s)\rangle_U ds\right\}.
 \end{aligned}
\end{equation}
Hence,
\begin{equation}
\frac{1}{2}\textrm{Re}\langle P(t)\eta, \eta\rangle =\mathcal{J}(t,\eta;\Theta(\cdot)\bar{x}(\cdot))\leq\mathcal{J}(t,\eta;u(\cdot)),\ u(\cdot)\in L^2([t_0,T];U),
\end{equation}
if and only if $K\geq 0$, a.e. $t\in[t_0,T]$.

Put
\begin{gather*}
  \mathfrak{U}_1:=\{t\in(t_0,T); K(t)h=0\ {\rm for\ some\ nonzero} \ h\in U\},\\
  \mathfrak{U}_2:=\{t\in(t_0,T); \|K(t)h\|_U>0, {\rm for\ all}\ h\in \mathcal{B}_U\},
\end{gather*}
where $\mathcal{B}_U:=\{h\in U; \|h\|_U=1\}.$ Clearly, $\mathfrak{U}_1\cap \mathfrak{U}_2=\varnothing$ and $\mathfrak{U}_1\cup\mathfrak{U}_2=(t_0,T)$.
By definition, we have that
\begin{equation*}
 \mathfrak{U}_2=\bigcup_{k=1}^\infty\left\{t\in(t_0,T); \|K(t)h\|_U>\frac{1}{k}, {\rm for\ all}\ h\in  \mathcal{B}_U\right\}.
\end{equation*}
Let $\mathcal{B}^0_U$ be the countable dense subset of $\mathcal{B}_U$. Then
\begin{equation}\label{deng jia ding yi ofU2}
\begin{aligned}
  \mathfrak{U}_2 &=\bigcup_{k=1}^\infty\left\{t\in(t_0,T); \|K(t)h\|_U>\frac{1}{k}, {\rm for\ all}\ h\in  \mathcal{B}^0_U\right\} \\
  &=\bigcup_{k=1}^\infty\bigcap_{h\in \mathcal{B}^0_U}\left\{t\in(t_0,T); \|K(t)h\|_U>\frac{1}{k}\right\}.
\end{aligned}
\end{equation}
Since $K(\cdot)h\in L^2([t_0,T];U),$  we deduce that, for any $h\in U$,
$\left\{t\in (t_0,T); \|K(t)h\|_U>\frac{1}{k}\right\}$
is Lebesgue measurable. Hence both  $\mathfrak{U}_1$ and  $\mathfrak{U}_2$ are Lebesgue measurable.

Now we prove that $K(t)>0$ for a.e. $t\in [t_0,T]$.
Let us use the contradiction argument and assume that this were untrue. Then, $\mathfrak{m}(\mathfrak{U}_1)$, the Lebesgue measure of $\mathfrak{U}_1$, would be positive.

For a.e. $t\in \mathfrak{U}_1$, put
\begin{equation*}
  \mathcal{T}(t):=\{h\in \mathcal{B}_U;\ K(t)h=0\}.
\end{equation*}
Clearly, $\mathcal{T}(t)$ is closed in $U$. Define a map $F:(t_0,T)\to 2^U$ as follows:
\begin{equation*}
  F(t)=\left\{
  \begin{array}{ll}
  \mathcal{T}(t),\ &t\in \mathfrak{U}_1, \\
  0, \ &t\in \mathfrak{U}_2.
\end{array}
\right.
\end{equation*}
Then, $F(t)$ is closed for a.e. $t\in(t_0,T)$.

Let $\mathcal{O}$ be a closed subset of $U$ and $\mathcal{O}_1=\mathcal{O}\cap \mathcal{B}_U$. Put
\begin{equation}\label{definition of sum}
\Omega_1:=\{t\in(t_0,T); F(t)\cap \mathcal{O}\neq \emptyset\},\quad
\Omega_2:=\{t\in(t_0,T); F(t)\cap \mathcal{O}_1\neq \emptyset\}.
\end{equation}
Clearly, $\Omega_2\subset\Omega_1$. Moreover,
\begin{equation*}
\Omega_1=\left\{
\begin{array}{ll}
\Omega_2\cup \mathfrak{U}_2, &0\in \mathcal{O},\\
\Omega_2, &0\notin \mathcal{O}.
\end{array}
\right.
\end{equation*}
Write
\begin{equation*}
  \Omega_3:=\{t\in(t_0,T); \|K(t)h\|_U>0\ {\rm for\ all}\ h\in \mathcal{O}_1\}.
\end{equation*}
It is easy to see that $\Omega_3\cap\Omega_2=\emptyset$ and $\Omega_3\cup\Omega_2=(t_0,T)$.
Furthermore, we can show that $\Omega_1$, $\Omega_2$ and $\Omega_3$ are Lebesgue measurable.

Applying Lemma \ref{measurable} to $F$ with $(t_0,T)$ to find a function $f:[t_0,T]\to U$ such that
\begin{equation*}
f(t)\in F(t)\ {\rm and}\  Kf=0,\ {\rm a.e.}\ t\in(t_0,T).
\end{equation*}
Noting that $\|f(t)\|_U\leq 1$, a.e. $t\in(t_0,T)$,  we find $f\in L^2([t_0,T];U)$. Moreover, we have
\begin{equation*}
  \|f(t)\|_U=1,\ {\rm a.e.}\ t\in\mathfrak{U}_1.
\end{equation*}
Hence, we conclude that $\|f\|_{L^2([t_0,T];U)}>0$.

By \eqref{cost function of step 4}, we see that $\Theta\bar{x}+f$ is also an optimal control. This contradicts the uniqueness of the optimal control. Hence, $\mathfrak{m}(\mathfrak{U}_1)=0$, i.e.
\begin{equation*}
  K(t)\ {\rm is\ injective\ for\ a.e.}\ t\in[t_0,T].
\end{equation*}

Further, we show that $\mathscr{R}(K(t))$ is dense in $U$ for a.e. $t\in [t_0,T]$, where $\mathscr{R}(K(t))$ is the range of $K(t)$. 
Put
\begin{equation*}
  \widetilde{\mathfrak{U}}_1 :=\{t\in(t_0,T);\mathscr{R}(K(t))^\bot\neq0\}, \quad
 \widetilde{\mathfrak{U}}_2 :=\{t\in(t_0,T);\mathscr{R}(K(t))^\bot=0\}.
\end{equation*}
Clearly, $\widetilde{\mathfrak{U}}_1\cup\widetilde{\mathfrak{U}}_2=(t_0,T)$. By the definition of $\widetilde{\mathfrak{U}}_2$, we see that
\begin{equation*}
  \widetilde{\mathfrak{U}}_2=\bigcup_{k=1}^\infty\left\{t\in(t_0,T); {\rm for\ all}\ \widetilde{h}\in\mathcal{B}_U^0,\ {\rm there\ exists}\ h\in\mathcal{B}_U^0\ {\rm such\ that}\ |\langle K(t)h, \widetilde{h}\rangle_U|>\frac{1}{k}\right\}.
\end{equation*}
Then
\begin{equation}\label{widetilde-mathfrakU2}
   \widetilde{\mathfrak{U}}_2=\bigcup_{k=1}^\infty\bigcup_{\widetilde{h}\in\mathcal{B}_U^0}\bigcap_{ h\in\mathcal{B}_U^0}\left\{t\in(t_0,T); \ |\langle K(t)h, \widetilde{h}\rangle_U|>\frac{1}{k}\right\}.
\end{equation}
Since $K(\cdot)h\in L^2([t_0,T];U),$ it follows that for any $h,\widetilde{h}\in U$,
\begin{equation}\label{set of widetildeU}
\left\{t\in(t_0,T); \ |\langle K(t)h, \widetilde{h}\rangle_U|>\frac{1}{k}\right\}
\end{equation}
is Lebesgue measurable. From \eqref{widetilde-mathfrakU2} and \eqref{set of widetildeU}, we can see that both $\widetilde{\mathfrak{U}}_1$ and $\widetilde{\mathfrak{U}}_2$ are also Lebesgue measurable.

To prove that $\mathscr{R}(K(t))$ is  dense in $U$, we use the contradiction argument. If $\mathscr{R}(K(t))$ were not dense in $U$ for a.e. $t\in(t_0,T)$, then $\mathfrak{m}(\widetilde{\mathfrak{U}}_1)>0$.

For a.e. $t\in \widetilde{\mathfrak{U}}_1$, put
\begin{equation*}
 \widetilde{\mathcal{T}}(t):=\left\{\widetilde{h}\in \mathcal{B}_U; \langle K(t)h, \widetilde{h}\rangle=0, {\rm for\ all}\ h\in U\right\}.
\end{equation*}
Clearly, $ \widetilde{\mathcal{T}}(t)$ is closed in $U$.
Define a map $\widetilde{F}:(t_0,T)\to 2^U$ as follows:
\begin{equation*}
  \widetilde{F}(t)=\left\{
  \begin{array}{ll}
\widetilde{  \mathcal{T}}(t),\ &t\in\widetilde{ \mathfrak{U}}_1, \\
  0, \ &t\in \widetilde{\mathfrak{U}}_2.
\end{array}
\right.
\end{equation*}
Then $ \widetilde{F}(t)$ is closed for a.e. $t\in(t_0,T)$.

Similar to \eqref{definition of sum}, let
\begin{equation*}
\widetilde{\Omega}_1:=\{t\in(t_0,T); \widetilde{F}(t)\cap \mathcal{O}\neq \emptyset\},\quad
\widetilde{\Omega}_2:=\{t\in(t_0,T); \widetilde{F}(t)\cap \mathcal{O}_1\neq \emptyset\}.
\end{equation*}
If $0\in \mathcal{O}$, then $\widetilde{\Omega}_1=\widetilde{\Omega}_2\cup \widetilde{\mathfrak{U}}_2$. If $0\notin \mathcal{O}$, then $\widetilde{\Omega}_1=\widetilde{\Omega}_2$. Hence, we only need to
prove that $\widetilde{\Omega}_2$ is measurable. Define
\begin{equation*}
\widetilde{\Omega}_3=\left\{t\in(t_0,T); {\rm for\ all}\ \widetilde{h}\in\mathcal{O}_1, {\rm there\ exists}\ h\in\mathcal{B}_U^0\ {\rm such\ that}\ |\langle K(t)h, \widetilde{h}\rangle_U|>0\right\}.
\end{equation*}
Then $\widetilde{\Omega}_2\cup\widetilde{\Omega}_3=(t_0,T)$ and $\widetilde{\Omega}_2\cap\widetilde{\Omega}_3=\emptyset$. Hence, it
 is enough to prove that $\widetilde{\Omega}_3$ is Lebesgue measurable.
Let $\mathcal{O}_0$ be a countable dense subset of $\mathcal{O}_1$.
Clearly,
\begin{align}
  \widetilde{\Omega}_3&= \bigcup_{k=1}^\infty\left\{t\in(t_0,T); {\rm for\ all}\ \widetilde{h}\in\mathcal{O}_1,  {\rm there\ exists}\ h\in\mathcal{B}_U^0\ {\rm such\ that}\ |\langle K(t)h, \widetilde{h}\rangle_U|>\frac{1}{k}\right\}\nonumber \\
  &=\bigcup_{k=1}^\infty\left\{t\in(t_0,T); {\rm for\ all}\ \widetilde{h}\in\mathcal{O}_0,  {\rm there\ exists}\ h\in\mathcal{B}_U^0\ {\rm such\ that}\ |\langle K(t)h, \widetilde{h}\rangle_U|>\frac{1}{k}\right\} \label{dengjiakehua of widetilde-Omega_3}\\
  &=\bigcup_{k=1}^\infty\bigcap_{\widetilde{h}\in\mathcal{O}_0}\bigcup_{h\in\mathcal{B}_U^0}
  \left\{t\in(t_0,T);  |\langle K(t)h, \widetilde{h}\rangle_U|>\frac{1}{k}\right\}.\nonumber
\end{align}
For any $k\in\mathbb{N}$, $\widetilde{h}\in\mathcal{O}_0$ and $h\in\mathcal{B}_U^0$, noting that $K(\cdot)h\in L^2([t_0,T];U)$,  we deduce that
\begin{equation}\label{final-4-O-3}
\left\{t\in(t_0,T); |\langle K(t)h,\widetilde{h}\rangle_U>\frac{1}{k}\right\}
\end{equation}
is Lebesgue measurable.
From \eqref{dengjiakehua of   widetilde-Omega_3} and \eqref{final-4-O-3}, it follows that $\widetilde{\Omega}_3$ is Lebesgue measurable. Hence, $\widetilde{\Omega}_2$ is also Lebesgue measurable.
%

Now we apply Lemma \ref{measurable} to $\widetilde{F}$ with $(t_0,T)$ to find a function $\widetilde{f}:[t_0,T]\to U$ such that
\begin{equation*}
\widetilde{f}(t)\in \widetilde{F}(t)\ {\rm and}\   \langle K(t)h, \widetilde{f}(t) \rangle_U=0, \  h\in U,\ {\rm a.e.}\ t\in(t_0,T).
\end{equation*}
Since
\begin{equation*}
  \|\widetilde{f}(t)\|_U\leq 1,\ {\rm a.e.}\ t\in(t_0,T),
\end{equation*}
it holds that $\widetilde{f}\in L^2([t_0,T];U)$. Furthermore, it infers that
\begin{equation*}
  \|\widetilde{f}(t)\|_U=1,\ {\rm a.e.}\ t\in \widetilde{\mathfrak{U}}_1,
\end{equation*}
which implies that $\|\widetilde{f}\|_{L^2([t_0,T];U)}>0$.

We prove that $\Theta\bar{x}+\widetilde{f}$ is also an optimal control. Indeed, by the choice of $\widetilde{f}$, it holds that for any $t\in[t_0,T)$,
\begin{equation*}
  \int_{t_0}^{T}\langle K(u-\Theta\bar{x}-\widetilde{f}),\widetilde{f}\rangle_U ds=0,
\end{equation*}
and
\begin{equation*}
  \int_{t_0}^{T}\langle K\widetilde{f}, u-\Theta\bar{x}-\widetilde{f}\rangle_U ds= \int_{t}^{T}\langle\widetilde{f}, K(u-\Theta\bar{x}-\widetilde{f})\rangle_U ds=0.
\end{equation*}
Therefore, for any $u(\cdot)\in L^2([t_0,T];U)$,
\begin{equation}\label{Thetabarx+widetildef is also an optimal control}
\begin{aligned}
& \int_{t_0}^T\langle K(u-\Theta\bar{x}-\widetilde{f}),u-\Theta\bar{x}-\widetilde{f}\rangle_U ds\\
 &\hspace{7mm} =\int_{t_0}^T\langle K(u-\Theta\bar{x}-\widetilde{f}),u-\Theta\bar{x}\rangle_U ds\\
&\hspace{7mm}=\int_{t_0}^T\langle K(u-\Theta\bar{x}),u-\Theta\bar{x}\rangle_U ds.
  \end{aligned}
\end{equation}
By \eqref{cost function of step 4} and \eqref{Thetabarx+widetildef is also an optimal control}, we obtain that
\begin{equation*}
  \mathcal{J}(t_0,\eta; \Theta(\cdot)\bar{x}(\cdot)-\widetilde{f}(\cdot))\leq  \mathcal{J}(t_0,\eta;u(\cdot)),\quad u(\cdot)\in L^2([t_0,T];U).
\end{equation*}
This indicates that $\Theta\bar{x}+\widetilde{f}$ is also an optimal control, which contradicts the uniqueness of the optimal controls. Hence, $\mathscr{R}(K(t))$ is dense in $U$ for a.e. $t\in(t_0,T)$.
For a.e. $t\in[t_0,T]$ and any $u\in U$,  $K(t)^{-1}:\mathscr{R}(K(t))\to U$, $K(t)^{-1}K(t)u=u$, which
is densely defined in $U$.

We prove that $K(t)^{-1}$ is a closed operator for a.e. $t\in[t_0,T]$. Since $K(t)$ is bounded for a.e. $t\in[t_0,T]$, it follows that  $K(t)$ is a closed operator for a.e. $t\in[t_0,T]$, and thus $K(t)^{-1}$ is also a closed operator for a.e. $t\in[t_0,T]$.
Hence,  Definition \ref{the def of weak solution to Riccati} \textbf{(i)} holds.

From \eqref{the step 5},
\begin{equation}\label{the final of step 4}
  -K^{-1}(B^*P+D^*PC)=\Theta.
\end{equation}
By substituting \eqref{the final of step 4} into \eqref{first variation of step 4}, we can demonstrate that Definition \ref{the def of weak solution to Riccati} \textbf{(ii)} holds.

\textbf{Step 5.} Finally, we prove the uniqueness of the weak solutions to \eqref{quantum Riccati equation}.
 Assume $P_1(\cdot),P_2(\cdot)\in C_\mathbb{A}([t_0,T];\mathcal{L}(L^2(\mathscr{C})))$ are two weak solution to \eqref{quantum Riccati equation}. By \eqref{final of Theorem 1}, one gets that
\begin{align}
  & \int_{t}^T\langle M(s)x(s),x(s)\rangle ds+ \int_{t}^T\langle R(s)u(s),u(s)\rangle_U ds+\langle Gx(T),x(T)\rangle\nonumber\\
  &\hspace{6mm} =\langle P_1(t)\eta,\eta\rangle+\int_{t}^{T} \langle K_1(s)(u(s)-\Theta(s)x(s)),u(s)-\Theta(s)x(s)\rangle ds\label{uniqueness-1}\\
&\hspace{6mm} =\langle P_2(t)\eta,\eta\rangle +\int_{t}^{T} \langle K_2(s)(u(s)-\Theta(s)x(s)),u(s)-\Theta(s)x(s)\rangle ds,\nonumber
\end{align}
where $K_i=R+D^*P_iD$ for $i=1,2$. Taking $u=\Theta x$ in \eqref{uniqueness-1},
 we deduce that for any $t\in[t_0,T]$,
 \begin{equation}\label{P1-P2}
\langle P_1(t)\eta,\eta\rangle=\langle P_2(t)\eta,\eta\rangle,\ \eta\in L^2(\mathscr{C}_{t }).
 \end{equation}
Hence, for any $\xi,\eta\in L^2(\mathscr{C}_{t})$, we have that
\begin{equation*}
\langle P_1(t)(\xi+\eta),\xi+\eta\rangle=\langle P_2(t)(\xi+\eta),\xi+\eta\rangle,
\end{equation*}
and
\begin{equation*}
\langle P_1(t)(\eta-\xi),\eta-\xi\rangle=\langle P_2(t)(\eta-\xi),\eta-\xi\rangle.
\end{equation*}
These, together with $P_1(\cdot)=P_1(\cdot)^*$ and $P_2(\cdot)=P_2(\cdot)^*$, imply that
\begin{equation*}
 \langle P_1(t)\eta,\xi\rangle=\langle P_2(t)\eta,\xi\rangle,\quad \xi,\eta\in L^2(\mathscr{C}_{t}).
\end{equation*}
Hence, $P_1(t)=P_2(t)$ for any $t\in [t_0,T]$. Consequently, the uniqueness of the desired solution follows.
Then, we complete the proof of the ``if " part.
\end{proof}

\section{Appendix}
\indent\indent
To prove Lemma \ref{approximate of FBQSDEs}, we first present the following auxiliary result.
\begin{lem}\label{cor of martingale representation}
Let $W(\cdot)$ be the Fermion Brownian motion defined by \eqref{Fermion Brownian motion}. Then, for any $f\in L^1_\mathbb{A}([0,T];$ $L^2(\mathscr{C}))$, there exists a unique $\mathcal{K}(\cdot,\cdot)\in  L^1([0,T];L^2_\mathbb{A}([0,T];L^2(\mathscr{C})))$ satisfying the following conditions:
\begin{description}
 \item[(i)] $\mathcal{K}(s,\sigma)=0, \quad\sigma >s;$
  \item[(ii)] For any $s\in [0,T],$
  \begin{equation}\label{cor of martingale f}
    f(s)=m(f(s))+\int_0^s\mathcal{K}(s,\sigma)dW(\sigma);
  \end{equation}
  \item[(iii)]
  \begin{equation}\label{estimate of cor of martingale martingale}
  \|\mathcal{K}\|_{L^1([0,T];L^2_\mathbb{A}([0,T];L^2(\mathscr{C})))}\leq 2\|f\|_{L^1_\mathbb{A}([0,T];L^2(\mathscr{C}))}.
\underline{}  \end{equation}
\end{description}
\end{lem}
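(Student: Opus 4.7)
\medskip

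\noindent\textbf{Proof plan.} The plan is to apply the standard Fermion martingale representation theorem pointwise in the parameter $s$, and then promote the resulting family of kernels $\{\mathcal{K}(s,\cdot)\}_{s\in[0,T]}$ to a single two-parameter object lying in the required Bochner space. Fix $s\in [0,T]$; since $f(s)\in L^2(\mathscr{C}_s)\subseteq L^2(\mathscr{C}_T)$, the Fermion martingale representation theorem (see e.g.\ \cite{P.book,B.S.W.1}) produces a unique $\mathcal{K}(s,\cdot)\in L^2_\mathbb{A}([0,s];L^2(\mathscr{C}))$ with
\begin{equation*}
f(s)=m(f(s))+\int_0^s \mathcal{K}(s,\sigma)\,dW(\sigma).
\end{equation*}
Extending by $\mathcal{K}(s,\sigma):=0$ for $\sigma>s$ yields (i) immediately, and by the Fermion It\^o isometry combined with $|m(f(s))|\leq \|f(s)\|_2$,
\begin{equation*}
\|\mathcal{K}(s,\cdot)\|_{L^2([0,T];L^2(\mathscr{C}))}^2
=\|f(s)-m(f(s))I\|_2^2
\leq \bigl(\|f(s)\|_2+|m(f(s))|\bigr)^2\leq 4\|f(s)\|_2^2.
\end{equation*}

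Next, to secure the joint measurability required for $\mathcal{K}$ to lie in $L^1([0,T];L^2_\mathbb{A}([0,T];L^2(\mathscr{C})))$, I would observe that the map
\begin{equation*}
\Phi:L^2(\mathscr{C}_T)\to L^2_\mathbb{A}([0,T];L^2(\mathscr{C})),\qquad \Phi(g):=\textrm{kernel in the representation of }g,
\end{equation*}
is linear and bounded with $\|\Phi(g)\|_{L^2}\leq 2\|g\|_2$. Since $f:[0,T]\to L^2(\mathscr{C})$ is strongly Bochner measurable and $f(s)\in L^2(\mathscr{C}_s)$ for a.e.\ $s$, the composition $s\mapsto \Phi(f(s))=\mathcal{K}(s,\cdot)$ is Bochner measurable with values in $L^2_\mathbb{A}([0,T];L^2(\mathscr{C}))$. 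Integrating the pointwise bound,
\begin{equation*}
\|\mathcal{K}\|_{L^1([0,T];L^2_\mathbb{A}([0,T];L^2(\mathscr{C})))}
=\int_0^T\|\mathcal{K}(s,\cdot)\|_{L^2}\,ds
\leq 2\int_0^T\|f(s)\|_2\,ds
=2\|f\|_{L^1_\mathbb{A}([0,T];L^2(\mathscr{C}))},
\end{equation*}
which gives (ii) and (iii).

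Uniqueness is automatic from the pointwise case: if $\mathcal{K}'$ is another such kernel, then for each $s$ one has $\int_0^s (\mathcal{K}(s,\sigma)-\mathcal{K}'(s,\sigma))\,dW(\sigma)=0$, and the Fermion It\^o isometry forces $\mathcal{K}(s,\cdot)=\mathcal{K}'(s,\cdot)$ in $L^2([0,s];L^2(\mathscr{C}))$ for a.e.\ $s$.

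The main obstacle I expect is the measurability step: one must verify that the scalar-to-operator-valued mapping $\Phi$ is genuinely bounded between the stated Banach spaces and that adaptedness of the kernel is preserved after the zero-extension to $[s,T]$. Both facts rest on the CAR structure of the Fermion fields (rather than on sample-path arguments, which are unavailable here); once the boundedness of $\Phi$ is in hand, composition with a Bochner-measurable $f$ and standard approximation by simple $\mathscr{C}$-adapted functions make the remaining steps routine.
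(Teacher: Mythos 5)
Your proposal is correct and follows essentially the same route as the paper: both apply the noncommutative martingale representation theorem fiberwise and use the It\^o--Clifford isometry to bound $\|\mathcal{K}(s,\cdot)\|_{L^2([0,T];L^2(\mathscr{C}))}$ by $2\|f(s)\|_2$. The only difference is presentational --- the paper secures measurability of $s\mapsto\mathcal{K}(s,\cdot)$ by explicitly approximating $f$ with simple adapted processes and showing the resulting kernels form a Cauchy sequence in $L^1([0,T];L^2_\mathbb{A}([0,T];L^2(\mathscr{C})))$, whereas you invoke the equivalent abstract fact that composing a Bochner measurable map with the bounded linear representation operator preserves Bochner measurability.
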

\begin{proof}
For any $f\in L^1_\mathbb{A}([0,T];L^2(\mathscr{C}))$, we can find a sequence of simple processes $\{f_n\}_{n=1}^\infty$ such that $$\lim_{n\to\infty}\|f_n-f\|_{ L^1_\mathbb{A}([0,T];L^2(\mathscr{C}))} =0,$$
where $f_n(t)=\sum_{i=0}^{n-1}\chi_{[t_i^n,t_{i+1}^n)}(t)a_{t_i^n}$ for partitions $\{t_i^n\}_{i=0}^n$ of $[0,T]$, and $a_{t_i^n}\in L^2(\mathscr{C}_{t_i^n})$.

By the noncommutative martingale representation theorem \cite[Theorem 4.1]{B.S.W.2},  for any $a_{t_i^n}$, there is a $k_{t_i^n}\in L^2_\mathbb{A}([0,T];L^2(\mathscr{C}))$ such that
\begin{equation}\label{martingale of simple}
a_{t_i^n}=m(a_{t_i^n})+ \int_0^{t_i^n}k_{t_i^n}(\sigma)dW(\sigma); \quad k_{t_i^n}(\sigma)=0,\ \sigma>t_i^n.
\end{equation}
For any $(s,\sigma)\in [0,T]\times [0,T]$, put $$\mathcal{K}_n(s,\sigma):=\sum_{i=0}^{n-1}\chi_{[t_i^n,t_{i+1}^n)}(s)\chi_{[0,s]}(\sigma)k_{t_i^n}(\sigma).$$ Then
\begin{equation}\label{the representation of f_n}
  f_n(s)=m(f_n(s))+\int_0^s\mathcal{K}_n(s,\sigma)dW(\sigma).
\end{equation}
Hence, for any $i,j\in \mathbb{N}$,
\begin{align}\label{cauchy estimate zuocha}
  &\int_0^T\left\{\int_0^T\|\mathcal{K}_i(s,t)-\mathcal{K}_j(s,t)\|_2^2dt\right\}^{\frac{1}{2}}ds\nonumber \\
   & \qquad=\int_0^T\left\{\int_0^s\|\mathcal{K}_i(s,t)-\mathcal{K}_j(s,t)\|_2^2dt\right\}^{\frac{1}{2}}ds \nonumber \\
  &\qquad=\int_0^T\left\|\int_0^s\{\mathcal{K}_i(s,t)-\mathcal{K}_j(s,t)\}dW(t)\right\|_2ds  \\
  &\qquad=\int_0^T\|f_i(s)-f_j(s)-m(f_i(s)-f_j(s))\|_2ds\nonumber \\
   &\qquad\leq2\|f_i-f_j\|_{L^1_\mathbb{A}([0,T];L^2(\mathscr{C}))}.\nonumber
\end{align}
Since $\{f_n\}_{n=1}^\infty$ is a Cauchy sequence in $L^1_\mathbb{A}([0,T];L^2(\mathscr{C}))$, by \eqref{cauchy estimate zuocha}, we deduce that $\{\mathcal{K}_n\}_{n=1}^\infty$ is a Cauchy sequence in $L^1([0,T];L^2_\mathbb{A}([0,T];L^2(\mathscr{C}))$. Thus, there is a $\mathcal{K}\in L^1([0,T];L^2_\mathbb{A}([0,T];L^2(\mathscr{C}))$ so that $\lim\limits_{n\to \infty}\mathcal{K}_n=\mathcal{K}$ in $L^1([0,T];L^2_\mathbb{A}([0,T];L^2(\mathscr{C}))$. Combining with \eqref{the representation of f_n}, we can obtain that \eqref{cor of martingale f}.

Similar to \eqref{cauchy estimate zuocha}, one has
\begin{equation}\label{cauchy estimate}
\int_0^T\left\{\int_0^T\|\mathcal{K}_n(s,t)\|_2^2dt\right\}^{\frac{1}{2}}ds\leq 2\|f_n\|_{L^1_\mathbb{A}([0,T];L^2(\mathscr{C}))}.
\end{equation}
Taking $n\to \infty$ in \eqref{cauchy estimate}, \eqref{estimate of cor of martingale martingale} holds.
\end{proof}
\begin{proof}[\textbf{Proof of Lemma \ref{approximate of FBQSDEs}}]

First, we prove that
$$\lim\limits_{n\to\infty}x_n(\cdot)=x(\cdot)\ {\rm{ in}}\ C_\mathbb{A}([t_0,T]; L^2(\mathscr{C})).$$
It follows from \eqref{FBQSDE} and \eqref{FBQSDE-finite} that for any $t\in [t_0,T]$,
\begin{gather*}
x(t)=\varsigma+\int_{t_0}^{t}\{A(s)+B(s)\Theta(s)\}x(s)ds+\int_{t_0}^{t}\{C(s)+D(s)\Theta(s)\}x(t)dW(s),\\
x_n(t)=\Gamma_n\varsigma+\int_{t_0}^{t}\{A(s)+B(s)\Theta(s)\}x_n(s)ds+\int_{t_0}^{t}\{C(s)+D(s)\Theta(s)\}x_n(s)dW(s).
\end{gather*}
By Minkowski's inequality, we can obtain that
\begin{equation}\label{the first estimate of x-xn}
\begin{aligned}
&\sup_{t\in[t_0,T]}\|x_n(t)-x(t)\|_2^2\\
&\leq 3\sup_{t\in[t_0,T]}\left\{\left\|\Gamma_n\varsigma-\varsigma\right\|_2^2+\left\|\int_{t_0}^{t}\{A(s)+B(s)\Theta(s)\}\left\{x_n(s)-x(s)\right\}ds\right\|_2^2\right.\\
&\hspace{22mm}\left.+\left\|\int_{t_0}^{t}\{C(s)+D(s)\Theta(s)\}\left\{x_n(s)-x(s)\right\}dW(s)\right\|_2^2\right\}.\\
&\leq 3\left\{\left\|\Gamma_n\varsigma-\varsigma\right\|_2^2+\left\|\int_{t_0}^{T}\{A(s)+B(s)\Theta(s)\}\left\{x_n(s)-x(s)\right\}ds\right\|_2^2\right.\\
&\hspace{9.5mm}\left.+\left\|\int_{t_0}^{T}\{C(s)+D(s)\Theta(s)\}\left\{x_n(s)-x(s)\right\}dW(s)\right\|_2^2\right\}.\\
\end{aligned}
\end{equation}

Next, we calculate each item of the right of the above inequality.
By H\"{o}lder's inequality, we have that
\begin{align}
&\left\|\int_{t_0}^{T}\{A(s)+B(s)\Theta(s)\}\left\{x_n(s)-x(s)\right\}ds\right\|_2^2\nonumber\\
&\hspace{9mm}\leq\left(\int_{t_0}^{T}\|\{A(s)+B(s)\Theta(s)\}\left\{x_n(s)-x(s)\right\}\|_2 ds\right)^2\label{estimate of A+B0}\\
&\hspace{9mm}\leq\left(\int_{t_0}^{T}\|\{A(s)+B(s)\Theta(s)\}\|_{\mathcal{L}(L^2(\mathscr{C}))} ds\right)^2\left\{\sup_{t\in[t_0,T]}\left\|x_n(t)-x(t)\right\|_2^2\right\}.\nonumber
\end{align}
By the isometry of the It\^{o}-Clifford stochastic integral in $L^2(\mathscr{C})$ \cite[Theorem 3.5(c)]{B.S.W.1}, 
\begin{align}
&\left\|\int_{t_0}^{T}\{C(s)+D(s)\Theta(s)\}\left\{x_n(s)-x(s)\right\}dW(s)\right\|_2^2\nonumber\\
&\hspace{9mm}\leq\int_{t_0}^{T}\|C(s)+D(s)\Theta(s)\|^2_{\mathcal{L}(L^2(\mathscr{C}))}\|x_n(s)-x(s)\|_2^2ds\label{estimate of C+D0}\\
&\hspace{9mm}\leq\int_{t_0}^{T}\|C(s)+D(s)\Theta(s)\|^2_{\mathcal{L}(L^2(\mathscr{C}))}ds\left\{\sup_{t\in[t_0,T]}\left\|x_n(t)-x(t)\right\|_2^2\right\}.\nonumber
\end{align}

Taking the limit as $ n \to \infty$ on both sides of \eqref{the first estimate of x-xn}, and using \eqref{estimate of A+B0} and \eqref{estimate of C+D0}, we obtain that
\begin{equation*}
\begin{aligned}
&\lim_{n\to\infty}\sup_{t\in[t_0,T]}\|x_n(t)-x(t)\|_2^2  \\
  & \leq 3\lim_{n\to\infty}\left\|\Gamma_n\varsigma-\varsigma\right\|_2^2+3\lim_{n\to\infty}\left(\int_{t_0}^{T}\|\{A(s)+B(s)\Theta(s)\}\|_{\mathcal{L}(L^2(\mathscr{C}))} ds\right)^2\left\{\sup_{t\in[t_0,T]}\left\|x_n(t)-x(t)\right\|_2^2\right\}\\
  &\hspace{4mm}+3\lim_{n\to\infty}\int_{t_0}^{T}\|C(s)+D(s)\Theta(s)\|^2_{\mathcal{L}(L^2(\mathscr{C}))}ds\left\{\sup_{t\in[t_0,T]}\left\|x_n(t)-x(t)\right\|_2^2\right\}.
\end{aligned}
\end{equation*}
By the definition of $\Gamma_n$, we observe that
\begin{equation}\label{the Gamman  I  estimate about convergence of x}
\lim_{n\to\infty}\left\|\Gamma_n\varsigma-\varsigma\right\|_2^2=0.
\end{equation}
Then, by Gronwall's inequality, we obtain that
\begin{equation}\label{the final convergence of xn-x}
  \lim_{n\to\infty}\sup_{t\in[t_0,T]}\|x_n(t)-x(t)\|_2^2=0.
\end{equation}

Similar to the proof of \eqref{the final convergence of xn-x},
$ \lim\limits_{n\to\infty}\sup\limits_{t\in[t_0,T]}\|\widetilde{x}_n(t)-\widetilde{x}(t)\|_2^2=0$ holds.
Next, we prove that
\begin{equation*}
\lim_{n\to\infty}y_n(\cdot)=y(\cdot)\ \textrm{in}\  C_\mathbb{A}([t_0,T]; L^2(\mathscr{C})),
\end{equation*}
and
\begin{equation*}
  \lim_{n\to\infty}Y_n(\cdot)=Y(\cdot)\ \textrm{in}\ L^2_\mathbb{A}([t_0,T]; L^2(\mathscr{C})).
\end{equation*}
Let
\begin{equation}\label{the sup of exp(At) and AC}
  \mathcal{M}:=\sup_{t\in[t_0,T]}\left\{\|A(t)\|_{\mathcal{L}(L^2(\mathscr{C}))}+\|C(t)\|_{\mathcal{L}(L^2(\mathscr{C}))}\right\}.
\end{equation}
Put
\begin{equation*}
\widetilde{T}_1:=\inf\left\{t\in[t_0,T];\, \left[ 6(5\mathcal{M}+1)\mathcal{M}+5\mathcal{M}\right]\cdot\max\{(T-t)^2,T-t\}\leq \frac{1}{2}\right\}.
\end{equation*}
Recall that for any $t\in[t_0,T]$,
\begin{equation}\label{representation of solution of BQSDE}
y(t)=-Gx(T)+\int_t^T\left\{A(s)^*y(s)+C(s)^*Y(s)-M(s)x(s)\right\}ds-\int_t^TY(s)dW(s).
\end{equation}
From \eqref{the condition of ABCD} and \eqref{condition of MGR}, it follows that $A(\cdot)^*y(\cdot)+C(\cdot)^*Y(\cdot)-M(\cdot)x(\cdot)\in L^1_\mathbb{A}([t_0,T];L^2(\mathscr{C})).$ Then, by Lemma \ref{cor of martingale representation}, we can find that there is an
$\mathcal{K}(\cdot,\cdot)\in L^1([t_0,T];L^2_\mathbb{A}([t_0,T];L^2(\mathscr{C})))$ such that
\begin{equation}\label{the condition of K}
\begin{aligned}
&A(s)^*y(s)+C(s)^*Y(s)-M(s)x(s)\\
&=m(A(s)^*y(s)+C(s)^*Y(s)-M(s)x(s))+\int_{t_0}^{s}\mathcal{K}(s,\tau)dW(\tau),
 \end{aligned}
\end{equation}
and
\begin{equation}\label{the estimate of K}
\|\mathcal{K}(\cdot,\cdot)\|_{L^1([t_0,T];L^2_\mathbb{A}([t_0,T];L^2(\mathscr{C})))}\leq 2\|A^*y+C^*Y-Mx\|_{L^1_\mathbb{A}([t_0,T];L^2(\mathscr{C}))}.
\end{equation}

By the noncommutative martingale representation theorem \cite[Theorem 4.1]{B.S.W.2}, there exists an $l\in L^2_\mathbb{A}([t_0,T];L^2(\mathscr{C}))$ such that
\begin{equation}\label{reresentation of l}
m(y_T|L^2(\mathscr{C}_t))=m(y_T)+\int_{t_0}^tl(\sigma)dW(\sigma),\quad t\in[t_0,T].
\end{equation}
Put $$y(t):=m\left(y_T-\int_t^T\{A(s)^*y(s)+C(s)^*Y(s)-M(s)x(s)\}ds\bigg|L^2(\mathscr{C}_t)\right),\ t\in[t_0,T].$$
Similar to classical stochastic Fubini theorem \cite[Theorem 2.141]{L.Z}, we can prove that if $\mathcal{K}(\cdot,\cdot)\in L^1([t_0,T];$ $L^2_\mathbb{A}([t_0,T];L^2(\mathscr{C}))),$ then $$\int_{t_0}^T\left\|\int_{t_0}^T\mathcal{K}(s,\sigma)dW(\sigma)\right\|_2ds<\infty,$$ and
\begin{equation*}
\int_{t_0}^T\int_{t_0}^T\mathcal{K}(s,\sigma)dW(\sigma)ds=\int_{t_0}^T\int_{t_0}^T\mathcal{K}(s,\sigma)dsdW(\sigma). 
\end{equation*}
Then, by \eqref{the condition of K} and \eqref{reresentation of l},  we deduce that
\begin{align*}
y(t)=y_T&-\int_t^T\{A(s)^*y(s)+C(s)^*Y(s)-M(s)x(s)\}ds\\
&-\int_t^Tl(\sigma)dW(\sigma)+\int_t^T\int_t^s\mathcal{K}(s,\sigma)dW(\sigma)ds\\
=y_T&-\int_t^T\{A(s)^*y(s)+C(s)^*Y(s)-M(s)x(s)\}ds\\
&-\int_t^Tl(\sigma)dW(\sigma)+\int_t^T\int_s^T\mathcal{K}(\sigma, s)d\sigma dW(s).
\end{align*}
This, together with \eqref{representation of solution of BQSDE}, implies that
\begin{equation}\label{the representation of K}
Y(s)=l(s)-\int_{s}^{T}\mathcal{K}(\sigma,s)d\sigma.
\end{equation}

Similarly, we can obtain that
\begin{equation*}
Y_n(s)=l_n(s)-\int_{s}^{T}\mathcal{K}_n(\sigma,s)d\sigma,
\end{equation*}
and
\begin{equation*}
y_n(t)=-Gx_n(T)+\int_t^T\left\{A(s)^* y_n(s)+C(s)^* Y_n(s)-M(s)x_n(s)\right\}ds-\int_t^TY_n(s)dW(s),
\end{equation*}
where $l_n(\cdot)\in L^2_\mathbb{A}([t_0,T];L^2(\mathscr{C}))$ such that
\begin{equation} \label{the representation of ln}
  m\left(Gx_n(T)\big|L^2(\mathscr{C}_t)\right)=m(Gx_n(T))+\int_{t_0}^tl_n(s)dW(s),
\end{equation}
and $\mathcal{K}_n(\cdot,\cdot)\in L^1([t_0,T];L^2_\mathbb{A}([t_0,T];L^2(\mathscr{C})))$ such that
\begin{equation}\label{the representation of Kn}
\begin{aligned}
&A(s)^* y_n(s)+C(s)^* Y_n(s)-M(s)x_n(s)\\
&=m\left(A(s)^* y_n(s)+C(s)^* Y_n(s)-M(s)x_n(s)\right)+\int_{t_0}^{s}\mathcal{K}_n(s,\tau)dW(\tau),
\end{aligned}
\end{equation}
and
\begin{equation}\label{the estimate of Kn}
\|\mathcal{K}_n(\cdot,\cdot)\|_{L^1([t_0,T];L^2_\mathbb{A}([t_0,T];L^2(\mathscr{C})))}\leq \mathcal{C}\left\|A^* y_n+C^* Y_n-Mx_n\right\|_{L^2_\mathbb{A}([t_0,T];L^2(\mathscr{C}))}.
\end{equation}

For any $t\in[\widetilde{T}_1,T]$, we have
\begin{equation}\label{yn-y+Y-Yn}
\begin{aligned}
&\sup_{t\in[\widetilde{T}_1,T]}\|y_n(t)-y(t)\|^2_2+(5\mathcal{M}+1)\int_{\widetilde{T}_1}^T\|Y_n(t)-Y(t)\|_2^2dt\\
&\leq(10\mathcal{M}+2)\left\{\int_{\widetilde{T}_1}^T\left\|l_n(t)-l(t)\right\|_2^2dt+\int_{\widetilde{T}_1}^T\left\|\int_{t}^{T}\mathcal{K}_n(s,t)-\mathcal{K}(s,t)ds\right\|^2_2dt\right\}\\
&\quad+5\|Gx(T)-Gx_n(T)\|_2^2+5\int_{\widetilde{T}_1}^{T}\|Y_n(s)-Y(s)\|_2^2ds \\
&\quad+5(T-\widetilde{T}_1)\sup_{t\in [\widetilde{T}_1,T]}\left\{\int_{t}^{T}\|A(s)^*y_n(s)-A(s)^*y(s)\|_2^2ds+\int_{t}^{T}\|C(s)^*Y_n(s)-C(s)^*Y(s)\|_2^2ds\right.\\
&\hspace{3.9cm}\left.+\int_{t}^{T}\|M(s)x_n(s)-M(s)x(s)\|_2^2ds\right\}.\\
&\leq(10\mathcal{M}+2)\left\{\int_{\widetilde{T}_1}^T\left\|l_n(t)-l(t)\right\|_2^2dt+\int_{\widetilde{T}_1}^T\left\|\int_{t}^{T}\mathcal{K}_n(s,t)-\mathcal{K}(s,t)ds\right\|^2_2dt\right\}\\
&\quad+5\|Gx(T)-Gx_n(T)\|_2^2+5\int_{\widetilde{T}_1}^{T}\|Y_n(s)-Y(s)\|_2^2ds \\
&\quad+5(T-\widetilde{T}_1)\left\{\int_{\widetilde{T}_1}^{T}\|A(s)^*y_n(s)-A(s)^*y(s)\|_2^2ds+\int_{\widetilde{T}_1}^{T}\|C(s)^*Y_n(s)-C(s)^*Y(s)\|_2^2ds\right.\\
&\hspace{2.8cm}\left.+\int_{\widetilde{T}_1}^{T}\|M(s)x_n(s)-M(s)x(s)\|_2^2ds\right\}.
\end{aligned}
\end{equation}
By \eqref{the final convergence of xn-x}, we have that
\begin{equation}\label{G_nx_nT-Gx_nT}
\lim_{n\to\infty}\|Gx(T)-Gx_n(T)\|_2^2=0,
\end{equation}
and
  \begin{align*}
&\int_{\widetilde{T}_1}^{T}\|M(s)x_n(s)-M(s)x(s)\|_2^2ds\\
&\hspace{8mm}\leq \int_{\widetilde{T}_1}^{T}\|M(s)\|^2_{\mathcal{L}(L^2(\mathscr{C}))}\|x_n(s)-x(s)\|_2^2ds\\
&\hspace{8mm}\leq \int_{\widetilde{T}_1}^{T}\|M(s)\|^2_{\mathcal{L}(L^2(\mathscr{C}))}ds\left\{\sup_{t\in[\widetilde{T}_1,T]}\|x_n(t)-x(t)\|_2^2\right\}\\
&\hspace{8mm}=0.
\end{align*}

Besides that,
\begin{align}
&\left(\int_{\widetilde{T}_1}^{T}\left\|A(s)^*(y_n(s)-y(s))\right\|_2ds\right)^2+\left(\int_{\widetilde{T}_1}^{T}\left\|C(s)^*(Y_n(s)-Y(s))\right\|_2ds\right)^2\nonumber\\
&\hspace{10mm}\leq \int_{\widetilde{T}_1}^{T}\|A(s)^*\|_{\mathcal{L}(L^2(\mathscr{C}))}^2ds\left\{\sup_{t\in[\widetilde{T}_1,T]}\|y(t)-y_n(t)\|_2^2\right\}\label{(y_n(s)-y(s))+(Y_n(s)-Y(s))}\\
&\hspace{14mm}+\int_{\widetilde{T}_1}^{T}\|C(s)^*\|_{\mathcal{L}(L^2(\mathscr{C}))}^2ds\int_{\widetilde{T}_1}^{T}\|Y_n(s)-Y(s)\|_2^2ds.\nonumber
\end{align}
From \eqref{reresentation of l} and \eqref{the representation of ln}, we have that
\begin{equation}\label{l_n(t)-l(t)}
\begin{aligned}
&\int_{\widetilde{T}_1}^T\left\|l_n(t)-l(t)\right\|_2^2ds\\
 &\hspace{8mm}\leq \left\|Gx(T)-m\left(Gx(T)\big|L^2(\mathscr{C}_{\widetilde{T}_1})\right)-Gx_n(T)+m\left(Gx_n(T)\big|L^2(\mathscr{C}_{\widetilde{T}_1})\right)\right\|_2^2\\
&\hspace{8mm}\leq2\left\|Gx(T)-Gx_n(T)\right\|_2^2+2\left\|m\left(Gx(T)\big|L^2(\mathscr{C}_{\widetilde{T}_1})\right)-m\left(Gx_n(T)\big|L^2(\mathscr{C}_{\widetilde{T}_1})\right)\right\|_2^2\\
&\hspace{8mm}\leq\mathcal{C}\|Gx(T)-Gx_n(T)\|_2^2.
\end{aligned}
\end{equation}
This, together with \eqref{G_nx_nT-Gx_nT}, implies that
\begin{equation}\label{the final of e(l_n(t)-l(t))}
\lim_{n\to\infty}\int_{\widetilde{T}_1}^T\left\|l_n(t)-l(t)\right\|_2^2ds=0.
\end{equation}
From \eqref{the representation of K}, \eqref{the estimate of K}, \eqref{the representation of Kn} and \eqref{the estimate of Kn}, we conclude that
\begin{align}\label{K_n(s,t)-K(s,t)}
  &\int_{\widetilde{T}_1}^T\left\|\int_t^T\left\{\mathcal{K}_n(s,t)-\mathcal{K}(s,t)\right\} ds\right\|^2_2dt\nonumber\\
&\hspace{8mm}\leq\left\{\left(\int_{\widetilde{T}_1}^T\left(\int_t^T\|\mathcal{K}_n(s,t)-\mathcal{K}(s,t)\|_2 ds\right)^2 dt\right)^{\frac{1}{2}}\right\}^2 \nonumber\\
&\hspace{8mm}\leq\left\{\int_{\widetilde{T}_1}^T\left(\int_t^T\|\mathcal{K}_n(s,t)-\mathcal{K}(s,t)\|^2_2 ds\right)^{\frac{1}{2}} dt\right\}^2\\
&\hspace{8mm}\leq3 \left\{ \int_{\widetilde{T}_1}^T\|A(t)^*\|_{\mathcal{L}(L^2(\mathscr{C}))}^2dt\left\{\sup_{t\in[\widetilde{T}_1,T]}\|y(t)-y_n(t)\|_2^2\right\}\right.\nonumber\\
  &\hspace{18.5mm}  +\int_{\widetilde{T}_1}^T\|C(t)^*\|_{\mathcal{L}(L^2(\mathscr{C}))}^2dt\int_{\widetilde{T}_1}^T\|Y(t)-Y_n(t)\|_{\mathcal{L}(L^2(\mathscr{C}))}^2dt  \nonumber\\
 &\hspace{18.5mm}   \left. +\int_{\widetilde{T}_1}^T\|M(t)\|^2_{\mathcal{L}(L^2(\mathscr{C}))}dt\left\{\sup_{t\in[\widetilde{T}_1,T]}\|x(t)-x_n(t)\|_2^2\right\}\right\}\nonumber.
 \end{align}
Combining with \eqref{yn-y+Y-Yn}-\eqref{K_n(s,t)-K(s,t)}, we obtain that
\begin{equation*}
\lim_{n\to\infty}\left\{\sup_{t\in[\widetilde{T}_1,T]}\|y(t)-y_n(t)\|_2^2+\int_{\widetilde{T}_1}^{T}\|Y(t)-Y_n(t)\|_2^2dt\right\}=0.
\end{equation*}
By repeating the above argument, we obtain the second and third equality in \eqref{finite approximation}.
\end{proof}

\noindent{Penghui Wang, School of Mathematics, Shandong University, Jinan 250100, Shandong, P. R. China, Email:
phwang@sdu.edu.cn}

\noindent{Shan Wang, School of Mathematics, Shandong University, Jinan 250100, Shandong, P. R. China, Email:
 202020244@mail.sdu.edu.cn
}

 \noindent{Shengkai Zhao, School of Mathematics, Shandong University, Jinan 250100, Shandong, P. R. China, Email:
202420320@mail.sdu.edu.cn}

\end{document}